\documentclass{amsart}

\usepackage[T1]{fontenc}
\usepackage[hidelinks]{hyperref}
\usepackage{cleveref}
\usepackage{amsthm}
\usepackage{quiver}

\usepackage{import}

\usepackage{graphicx}

\usepackage{tikzit}
% TiKZ style file generated by TikZiT. You may edit this file manually,
% but some things (e.g. comments) may be overwritten. To be readable in
% TikZiT, the only non-comment lines must be of the form:
% \tikzstyle{NAME}=[PROPERTY LIST]

% Node styles
\tikzstyle{dotdotdot}=[fill=white, draw=black, shape=rectangle]
\tikzstyle{new style 0}=[fill=black, draw=none, shape=circle]

% Edge styles
\tikzstyle{new edge style 0}=[-, densely dashed]
\tikzstyle{new edge style 2}=[-, draw=blue]
\tikzstyle{new edge style 3}=[->]

\usepackage{caption}
\usepackage{xparse}

\numberwithin{figure}{section}

\newtheorem{theorem}{Theorem}[section]
\newtheorem*{theorem*}{Theorem}

\newtheorem{lemma}[theorem]{Lemma}
\newtheorem{prop}[theorem]{Proposition}
\newtheorem*{prop*}{Proposition}

\newtheorem{coro}[theorem]{Corollary}
\theoremstyle{remark}
\newtheorem{remark}[theorem]{Remark}
\newtheorem*{question*}{Question}
\theoremstyle{definition}
\newtheorem{definition}[theorem]{Definition}

\Crefname{theorem}{Theorem}{Theorems}
\Crefname{prop}{Proposition}{Propositions}
\Crefname{question}{Question}{Questions}
\Crefname{coro}{Corollary}{Corollaries}
\Crefname{lemma}{Lemma}{Lemmas}
\Crefname{definition}{Definition}{Definitions}
\Crefname{remark}{Remark}{Remarks}

\newcommand{\SO}{\mathrm{SO}(3)}

\newcommand{\scc}{simple closed curve}
\newcommand{\sccs}{simple closed curves}
\newcommand{\und}[1]{\underline{#1}}

\newcommand{\lra}{\longrightarrow}
\newcommand{\ra}{\rightarrow}
\newcommand{\xlra}{\xrightarrow}

\newcommand{\Lra}{\Longrightarrow}

\newcommand{\Nu}{\mathcal{V}}
\newcommand{\ad}{\mathrm{ad}\,}

\newcommand{\PMod}[1]{\mathrm{Mod}(#1)}
\newcommand{\tPMod}[1]{\mathrm{M\tilde{o}d}(#1)}
\newcommand{\PModl}[2]{\mathrm{Mod}^{#1}(#2)}
\newcommand{\tPModl}[2]{\mathrm{M\tilde{o}d}^{#1}(#2)}
\newcommand{\coh}[2]{\mathrm{H}^1(#1,#2)}

\newcommand{\End}[1]{\mathrm{End}(#1)}
\newcommand{\GL}[1]{\mathrm{GL}(#1)}
\newcommand{\PU}[2]{\mathrm{PU}(#1,#2)}

\newcommand{\TU}[1]{\mathrm{TU}(#1)}
\newcommand{\Ug}[1]{\mathrm{U}(#1)}
\newcommand{\Id}[1]{\mathrm{I}_{#1}}

\newcommand{\Aut}[1]{\mathrm{Aut}(#1)}

\newcommand{\Hom}[2]{\mathrm{Hom}(#1,#2)}

\newcommand{\Z}{\mathbb{Z}}

\newcommand{\C}{\mathbb{C}}
\newcommand{\Q}{\mathbb{Q}}
\newcommand{\R}{\mathbb{R}}

\newcommand{\id}{\mathrm{id}}

\title{Rigidity of Fibonacci representations of mapping class groups}
\author{Pierre Godfard}

\begin{document}

\begin{abstract} 
    We prove that level $5$ Witten-Reshetikhin-Turaev \texorpdfstring{$\SO$}{SO(3)} quantum representations,
    also known as the Fibonacci representations,
    of mapping class groups are locally rigid.
    More generally, for any prime level $\ell$, we prove that the level $\ell$ \texorpdfstring{$\SO$}{SO(3)} quantum representations
    are locally rigid on
    all surfaces of genus $g\geq 3$ if and only if they are locally rigid on surfaces of genus $3$ with at most $3$ boundary components.
    This reduces local rigidity in prime level $\ell$ to a finite number of cases.
\end{abstract}

\maketitle

\section{Introduction}

It is expected that TQFT representations are locally rigid,
either because of Kazhdan's property (T) (yet to be proved for mapping class groups),
or because of interpretations as complex variations of Hodge structures (yet to be constructed, see \ref{CVHS} below).

In this paper, we show that local rigidity at a prime level $\ell$ reduces to a finite number of cases.
This enables us to prove local rigidity in the Fibonacci case (level $5$).

\subsection{The results}\label{subsectionintroresults}

The quantum representations studied here are projective representations of the mapping class groups of surfaces,
parametrized by a Lie group $G$, a positive integer $\ell$ called level,
and a set of irreducible representations $\Lambda$ of $G$, that depends on $\ell$.
We will restrict ourselves to the case where $G$ is $\SO$ and the level is prime, with a complete result only for $\ell=5$.
In quantum topology, these representations arise from the Witten-Reshetikhin-Turaev TQFT \cite{reshetikhinInvariants3manifoldsLink1991}.
They are known to be related to spaces of conformal blocks.

The level has another meaning in Conformal Field Theory, but here the level will be the order of the root of unity considered.
More precisely, these representations are defined over the cyclotomic field $\Q(\zeta_\ell)$
(and even over its ring of integers \cite{gilmerIntegralLatticesTQFT2007}).

In this context, for each compact surface $S_g^n$ of genus $g$ with $n$ boundary components,
odd integer $\ell$ and $n$-tuple of colors $\underline{\lambda}\in \Lambda^n$,
there is a representation of the mapping class group $\PMod{S_g^n}$:
$$\rho_{g,n}(\underline{\lambda}):\PMod{S_g^n}\lra \mathrm{PGL_d}(\Q(\zeta_\ell))$$
where $d$ depends on $g$, $n$ and $\underline{\lambda}$.

In this paper, we study the local rigidity of these representations. More\linebreak precisely, with
the notation $\mathcal{X}(\PMod{S_g^n},\mathrm{PGL_d})$ for the character variety\linebreak
$\Hom{\PMod{S_g^n}}{\mathrm{PGL_d}} // \mathrm{PGL_d}$, the question is the following.

\begin{question*}
Is $\rho_{g,n}(\underline{\lambda})$ locally rigid? ie.
do we have $$\coh{\PMod{S_g^n}}{\ad\rho_{g,n}(\underline{\lambda})}=0?$$
Equivalently, is $[\rho_{g,n}(\underline{\lambda})]$ an isolated smooth point of $\mathcal{X}(\PMod{S_g^n},\mathrm{PGL_d})$?
\end{question*}
This question was asked, for example,
in the survey by L. Funar \cite[Question 2.3]{funarMappingClassGroup2021c}.
Here, $\ad \rho_{g,n}(\underline{\lambda})$ is the adjoint representation of $\PMod{S_g^n}$
on the space $\mathfrak{sl}_{\mathrm{d}}(\C)$ of matrices with trace $0$.

These representations factor through $\PModl{\ell}{S_g^n}$, the quotient of $\PMod{S_g^n}$ by the $\ell$-th powers of Dehn twists.
Our main result is the following.

\begin{theorem*}[\textbf{\ref{rigidityinlowlevel}}]
    For $g\geq 3$ and $n\geq 0$, the representations of $\PMod{S_g^n}$ coming from the $\SO$ TQFT in level $5$ are locally rigid within
    $\mathrm{PGL_d}(\C)$.
    Moreover, if we consider these representations as representations of the quotients $\PModl{5}{S_g^n}$ of the mapping class groups,
    then they are all locally rigid within $\mathrm{PGL_d}(\C)$, for any genus $g\geq 0$
    and number of boundary components $n\geq 0$.
\end{theorem*}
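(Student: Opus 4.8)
The strategy is to reduce the claim for all $(g,n)$ to a uniform statement on low-complexity surfaces and then invoke an Euler-characteristic/complexity induction, as announced in the abstract. First I would set up cohomological machinery for the adjoint representation: given the WRT TQFT gluing axioms, the representation spaces $V_{g,n}(\underline\lambda)$ decompose along pants decompositions, and a cut of $S_g^n$ along a simple closed curve $c$ induces, via the Birman exact sequence / the action of the Dehn twist $T_c$, a way to relate $\ad\rho_{g,n}$ to adjoint representations on the pieces. I would phrase local rigidity as the vanishing of $\mathrm{H}^1(\PModl{\ell}{S_g^n},\mathfrak{sl}_d(\C))$ and use a Mayer–Vietoris-type or subgroup-restriction argument: the mapping class group of $S_g^n$ is generated (for $g$ large enough) by mapping class groups of subsurfaces of controlled topological type, so a class in $\mathrm{H}^1$ is determined by its restrictions. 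The combinatorics of how these subsurfaces overlap (the "curve complex"/"cut system complex" being connected and simply connected in the right range) is what lets one glue local vanishing into global vanishing.

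Concretely, I expect the proof to run as follows. Step one: establish the base of the induction — prove directly (or cite the finite computation the abstract alludes to) that $\rho_{3,n}(\underline\lambda)$ is locally rigid for $n\le 3$ in level $5$; since level $5$ has only two colors (the Fibonacci case), the representation spaces are small enough that this is a finite check, and by the main reduction theorem of the paper (Theorem on genus $\ge 3$), local rigidity on $S_3^{\le 3}$ in level $5$ already yields local rigidity for all $g\ge 3$, $n\ge 0$. Step two: handle low genus $g\le 2$ and the quotient statement $\PModl{5}{S_g^n}$. Here one uses that for $g\le 2$ the group $\PModl{5}{S_g^n}$ is either finite or can be built up from surfaces with boundary of higher genus by capping off boundary components; capping a boundary component is a surjection $\PModl{5}{S_g^{n+1}}\twoheadrightarrow\PModl{5}{S_g^n}$ whose kernel is generated by a boundary Dehn twist (which is killed anyway in $\PModl{\ell}{}$) together with point-pushing, so $\mathrm{H}^1$ of the quotient injects into $\mathrm{H}^1$ of the larger group once one checks the point-pushing contributions vanish. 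For small genus one also embeds $S_g^n$ as a subsurface of some $S_{g'}^{n'}$ with $g'\ge 3$ whose rigidity is known, then runs an inflation/restriction argument.

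The main obstacle will be Step two — the low-genus and quotient cases — rather than the high-genus case, which is essentially the content of the cited reduction theorem. For $g=0,1,2$ the mapping class groups are not covered by the large-genus generation-by-subsurfaces argument, so one cannot directly transfer rigidity along an embedding $S_g^n\hookrightarrow S_{g'}^{n'}$: the restriction map on $\mathrm{H}^1$ need not be injective, and the adjoint TQFT representation does not restrict cleanly because the colors on the new boundary curves must be summed over. The right fix is to use the explicit small-rank structure in level $5$: decompose $\ad\rho_{g,n}$ into isotypic pieces for the $\PSL_2$-type symmetry present in the Fibonacci TQFT, and show each piece contributes no deformations by a direct computation on a generating set of $\PModl{5}{S_g^n}$ (for instance the chain/lantern relators), using that $\PModl{5}{S_g^n}$ is finite for many small $(g,n)$ and is otherwise handled by the capping surjection. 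I would also need the standard fact that $\mathrm{H}^1(\PModl{\ell}{S_g^n},-)$ is unchanged by the central extension passing from $\mathrm{PGL}_d$ to $\mathrm{GL}_d$, so that "locally rigid within $\mathrm{PGL}_d(\C)$" is exactly the vanishing statement and no $\mathrm{H}^2$-obstruction from the projectivity intervenes.
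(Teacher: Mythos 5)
Your high-level plan (prove base cases, then induct via a Mayer--Vietoris-type argument, and note that $\mathrm{PGL}_d$ vs.\ $\mathrm{GL}_d$ and $\PMod{}$ vs.\ $\PModl{\ell}{}$ make no difference) matches the paper in outline, and the $\mathrm{PGL}_d/\mathrm{GL}_d$ reduction is exactly what \Cref{reductiontounitary} and the proposition opening \Cref{appendixrigidity} provide. However, the way you split the work --- directly verify $S_3^{\le 3}$, apply \Cref{induction}, then handle $g\le 2$ separately --- is not what the paper does, and your plan for the low-genus and genus-zero cases is where the genuine gap lies.

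First, the paper does \emph{not} prove the $S_3^n$ ($n\le 3$) cases directly or by computer; it reaches genus $3$ by climbing up from genus $0,1,2$. The key point you appear to be missing is that \Cref{mainlemma} does not require large genus: it requires that the cut pieces carry \emph{irreducible} representations, and in level $5$ this holds on \emph{every} colored surface by \Cref{irreducibility}. Consequently the Mayer--Vietoris injection argument of \Cref{mainlemma} (and the slight variant used in \Cref{S06} and \Cref{S0n}) already applies at genus $0$ and $1$; your stated obstacle ("for $g=0,1,2$ the mapping class groups are not covered by the generation-by-subsurfaces argument") is therefore not the obstruction the paper has to route around. Second, several of the specific mechanisms you propose for the base cases would fail or are not available. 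Finiteness of $\PModl{5}{S_g^n}$ only holds for $(g,n)\in\{(0,2),(0,3),(1,0),(1,1)\}$ (\Cref{initializationeasy}); in particular $\PModl{5}{S_0^5}$ is an \emph{infinite} cocompact lattice in $\PU{1}{2}$, and the paper treats it via Weil rigidity (\Cref{S05}), while $S_0^4$ is handled by a bare-hands dimension count in the Mayer--Vietoris sequence (\Cref{S04}); neither is a finiteness argument. There is also no $\mathrm{PSL}_2$-type isotypic decomposition of $\ad\rho$ being used anywhere. Finally, the capping surjection you invoke only reduces the deformation space when the capped boundary component is colored $0$ (\Cref{removezeros}); it does not let you pass from $S_g^{n+1}$ to $S_g^n$ with an arbitrary boundary color, so it cannot serve as a general device for reducing $n$. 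In short, to turn your outline into a proof you would need to replace the vague Step two with the concrete base cases of \Cref{initializationeasy}, \Cref{S04}, \Cref{S05}, the genus-zero variants \Cref{S06} and \Cref{S0n}, and then run the cut-and-paste induction of \Cref{mainlemma} \emph{through} genus $1$ and $2$ rather than around them.
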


Note that in this context, local rigidity never depends on the target group of the representation (see \Cref{reductiontounitary})
and, when $g\geq 3$, does not depend on whether the source group is $\PMod{S_g^n}$ or $\PModl{\ell}{S_g^n}$ (see \Cref{removingthel}).

The proof relies on a general induction on $g$ and $n$
and reduces the proof to a finite number of cases, where rigidity has then to be proved directly.
This can be done either by computer, or by geometric arguments.
For example, in the case $S_0^5$ of genus $0$ with $5$ boundary components, the representation is the monodromy of the Hirzebruch
surface, and local rigidity follows from Weil's rigidity.

More generally, for any prime level $\ell\geq 5$, there is an induction process that reduces the study of
local rigidity to the case of genus $3$ with at most $3$
boundary components:

\begin{theorem*}[\textbf{\ref{induction}}]
    Let $\ell\geq 5$ be a prime number.

    Assume that for $n\in\{0,1,2,3\}$ and every coloring $\und{\lambda}$ of $\partial S_3^n$,
    the corresponding $\SO$ representation of $\PMod{S_3^n}$ is locally rigid within $\mathrm{PGL_d}(\C)$.

    Then for all $g\geq 3$, $n\geq 0$ and coloring $\und{\lambda}$ of $\partial S_g^n$,
    the associated $\SO$ representation of $\PMod{S_g^n}$ is locally rigid within $\mathrm{PGL_d}(\C)$.
\end{theorem*}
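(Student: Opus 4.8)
The plan is to run a double induction on the genus $g$ and the number of boundary components $n$, using the standard building blocks of mapping class group theory: the cutting/gluing maps relating $\PMod{S_g^n}$ to $\PMod{S_{g'}^{n'}}$ for smaller complexity, together with the corresponding factorization of the TQFT representations via the Verlinde/gluing formula. Recall that $\mathrm{H}^1$ with coefficients in a semisimple representation vanishes iff it vanishes for every isotypic summand; since the TQFT vector space $V(S_g^n,\und\lambda)$ decomposes, under gluing along a separating or non-separating curve, into a direct sum of tensor products $\bigoplus_\mu V(S_{g_1}^{n_1},\und\lambda_1,\mu)\otimes V(S_{g_2}^{n_2},\und\lambda_2,\mu)$ (resp. $\bigoplus_\mu V(S_{g-1}^{n+2},\und\lambda,\mu,\mu)$), the adjoint representation $\ad\rho_{g,n}(\und\lambda)$ restricted to the subgroup generated by the two (or one) pieces decomposes compatibly. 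The first step is therefore to set up this decomposition precisely and to record the behavior of $\ad$ under $\otimes$, namely $\ad(\sigma\otimes\tau)=(\ad\sigma\otimes 1)\oplus(1\otimes\ad\tau)\oplus(\text{something trivial on the relevant subgroup})$, so that rigidity of the two factors controls rigidity of the glued representation on the image of the gluing subgroup.

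Second, I would invoke a Mayer--Vietoris / Birman exact sequence type argument: write $\PMod{S_g^n}$ using a presentation in which it is generated by mapping class groups of subsurfaces of smaller complexity, glued along curves, and use the associated exact sequences in group cohomology. Concretely, for $g\geq 3$ the group $\PMod{S_g^n}$ is generated by two subsurface subgroups whose overlap is itself a mapping class group of a surface with controlled topology; a $\mathrm{H}^1$-vanishing statement for the pieces and the overlap, combined with the exactness of
\[
0\to\mathrm{H}^1(\PMod{S_g^n},\ad\rho)\to\mathrm{H}^1(\Gamma_1,\ad\rho)\oplus\mathrm{H}^1(\Gamma_2,\ad\rho)\to\mathrm{H}^1(\Gamma_1\cap\Gamma_2,\ad\rho),
\]
forces the left term to vanish. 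The base of the induction is exactly the finitely many surfaces with $g=3$ and $n\leq 3$ assumed rigid in the hypothesis; the inductive step must reduce an arbitrary $(g,n)$ with $g\geq 3$ to strictly smaller cases, either by capping a boundary component (reducing $n$, possibly using the Birman exact sequence relating $S_g^{n}$ to $S_g^{n-1}$ with a puncture) or by cutting along a non-separating curve to pass from genus $g$ to genus $g-1$ with more boundary, and then capping down. One has to be careful that capping a boundary component of a surface with $g\geq 3$ lands again in the range $g\geq 3$, and that the colorings one is forced to consider on the internal gluing curves all lie in $\Lambda$, which is automatic since the gluing sum ranges precisely over $\Lambda$.

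The main obstacle I expect is controlling the overlap term $\mathrm{H}^1(\Gamma_1\cap\Gamma_2,\ad\rho)$ and, relatedly, handling the low-complexity pieces that appear in the gluing that are \emph{not} of genus $\geq 3$: cutting a genus-$g$ surface typically produces pieces of small genus (e.g. one-holed tori or spheres with few holes), whose mapping class groups need not have vanishing $\mathrm{H}^1$ in the relevant representation, so a naive subsurface induction cannot stay inside the "rigid" range. The resolution should be to choose the decomposition so that the pieces of low genus only contribute through subgroups on which the relevant summand of $\ad\rho$ is either trivial or still controlled — for instance by always cutting along \emph{separating} curves that split off a fixed small piece while keeping the large piece of genus $\geq 3$, and by an auxiliary argument (perhaps the one behind \Cref{removingthel} and \Cref{reductiontounitary}) that the contribution of the small piece and of the gluing-curve colors is absorbed. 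Making this bookkeeping uniform in $\und\lambda$ and in $(g,n)$, and verifying that the inductive step genuinely decreases a well-founded complexity measure while never leaving the hypothesized base range, is the technical heart of the argument; the cohomological exact sequences themselves are then formal.
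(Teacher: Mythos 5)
The high-level strategy you propose (double induction on $(g,n)$, cut-and-glue factorization, Mayer--Vietoris style cohomological control) is the right one, and you correctly flag the overlap term as the main obstacle. However, your proposed resolution of that obstacle misses the key technical point, and the exact sequence you write down is not actually exact.

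The problem with the displayed sequence
\[
0\to\mathrm{H}^1(\PMod{S_g^n},\ad\rho)\to\mathrm{H}^1(\Gamma_1,\ad\rho)\oplus\mathrm{H}^1(\Gamma_2,\ad\rho)\to\mathrm{H}^1(\Gamma_1\cap\Gamma_2,\ad\rho)
\]
is twofold. First, the Mayer--Vietoris sequence for the amalgam $G=\Gamma_1*_{\Gamma_{12}}\Gamma_2$ begins in degree $0$, so there is a connecting map $M^{\Gamma_{12}}\to\mathrm{H}^1(G,M)$ whose image need not vanish; one cannot simply drop the degree-$0$ terms. Second, and more fundamentally, $\PMod{S_g^n}$ is \emph{not} the amalgamated product $G$: it is only a quotient $G/R$, so inflation gives an injection $\mathrm{H}^1(\PMod{S_g^n},M)\hookrightarrow\mathrm{H}^1(G,M)$ but the restriction of Mayer--Vietoris to this subspace is precisely what has to be analyzed. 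The paper's Lemma~\ref{mainlemma} is devoted to this: it explicitly computes the composite of the connecting map $\delta:M^{\Gamma_{12}}\to\mathrm{H}^1(G,M)$ with the restriction $f:\mathrm{H}^1(G,M)\to\mathrm{Hom}(R,M)$, obtaining a double-commutator $[g_1,[g_2,m]]$, and then invokes a linear-algebra lemma (\Cref{linearalgebra}) to show that the kernel of $f\circ\delta$ coincides with the kernel of $\delta$. This is the substantive step your sketch does not supply.

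Your proposed fix for the ``small pieces fall out of the rigid range'' issue — splitting along separating curves that peel off a fixed low-complexity subsurface — also does not match the mechanism the paper uses. In the paper, the curves $a_1,a_2$ defining $\Gamma_1,\Gamma_2$ are \emph{non-separating}, so $S_{a_1}\simeq S_{a_2}\simeq S_{g-1}^{n+2}$ stays in the genus-$\geq 3$ range throughout the genus induction; only the auxiliary separating multicurve $c$ produces low-genus pieces, and those pieces are required merely to carry \emph{irreducible} representations (for the commutator computation), not rigid ones. The separate hypothesis $\Nu(S_{a_1,a_2},\mu_1,\mu_2)\neq 0\Rightarrow\Nu(S_{a_1,a_2,c},\mu_1,\mu_2,\und{\lambda_c})\neq 0$ (for some choice of $\und{\lambda_c}$), which you do not mention, is what allows the conclusion of the linear-algebra lemma to propagate back to all summands; verifying it (using property \textbf{(I)} with $\lambda_c=0$) is part of the inductive step. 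Finally, your identity for $\ad(\sigma\otimes\tau)$ is not literally correct — $\ad(\sigma\otimes\tau)\simeq\ad\sigma\otimes\ad\tau$ — and the correct cohomological consequence comes from the Künneth formula together with irreducibility of the factors, as in the paper's \Cref{kunneth} and its corollary.
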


\begin{remark}\label{remarkanomaly}
    In the rest of the paper, we need to use a linearized version of the representations to state the results.
    This amounts to take in account the "projective anomaly" of quantum representations.
    This approach is equivalent, see \Cref{appendixrigidity} for details.
\end{remark}

\begin{remark}
    In prime level $\ell>5$, the representation of $\PModl{\ell}{S_0^4}$ coming from the $\SO$ TQFT in level $\ell$
    is not locally rigid for $\underline{\lambda}=(2,2,2,2)$. This can be proved by a dimension count using the method of \Cref{S04}.
    Thus \Cref{rigidityinlowlevel} fails at genus $0$ for larger prime levels.
    This does not stop us from conjecturing that the representations are locally rigid in genus $g\geq 3$ for all prime levels.
\end{remark}

\subsection{Existence of complex variation of Hodge structures}\label{CVHS}

For $g,n\geq 0$ with $3g-3+n\geq 0$ and $\ell\geq 5$ odd, the group $\PModl{\ell}{S_g^n}$ is the fundamental group
of a compact Kähler uniformizable orbifold $\overline{\mathcal{M}}_{g,n}(\ell)$ (\cite[1.1]{eyssidieuxOrbifoldKahlerGroups2021}).
It is constructed as a moduli space of $\ell$-twisted stable curves.

As this orbifold is uniformizable and Kähler, it verifies the non-abelian Hodge correspondence.
Thus any rigid reductive representation of its fundamental group supports a complex variation of Hodge structure
\cite[Lemma 4.5]{simpsonHiggsBundlesLocal1992}.

\begin{theorem}
    For $g\geq 0$ and $n\geq 0$ with $3g-3+n\geq 0$, the flat projective bundles over $\overline{\mathcal{M}}_{g,n}(5)$ induced by
    the level $5$ $\SO$ TQFT representations of $\PModl{5}{S_g^n}$ support complex variations of Hodge structure.
\end{theorem}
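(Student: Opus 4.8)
The plan is to obtain this as a direct consequence of \Cref{rigidityinlowlevel} together with the orbifold non-abelian Hodge correspondence recalled just above. Fix $g,n\geq 0$ with $3g-3+n\geq 0$ and a coloring $\und{\lambda}$ of $\partial S_g^n$, write $\Gamma=\PModl{5}{S_g^n}$, which by \cite[1.1]{eyssidieuxOrbifoldKahlerGroups2021} is the orbifold fundamental group of the compact Kähler uniformizable orbifold $\overline{\mathcal{M}_{g,n}}(5)$, and let $\bar\rho\colon\Gamma\to\mathrm{PGL}_d(\C)$ be the level $5$ $\SO$ representation associated with $\und{\lambda}$; the flat projective bundle in the statement is the one attached to $\bar\rho$. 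By the discussion preceding the statement (the non-abelian Hodge correspondence on $\overline{\mathcal{M}_{g,n}}(5)$ and \cite[Lemma 4.5]{simpsonHiggsBundlesLocal1992}), it suffices to check that $\bar\rho$ is rigid and reductive.

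First I would dispose of reductivity: the level $5$ $\SO$ TQFT is unitary, the Hermitian pairing on the TQFT space $V(S_g^n;\und{\lambda})$ being positive definite at the relevant root of unity, so $\bar\rho$ has image in a conjugate of the compact group $\mathrm{PU}(d)$; in particular its Zariski closure is reductive and $\bar\rho$ is semisimple. Then I would address rigidity: by deformation theory the Zariski tangent space of $\mathcal{X}(\Gamma,\mathrm{PGL}_d)$ at $[\bar\rho]$ is $\coh{\Gamma}{\ad\bar\rho}$, so cohomological local rigidity is equivalent to $[\bar\rho]$ being a reduced isolated point, that is, to rigidity in the sense required by Simpson's lemma. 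The vanishing $\coh{\Gamma}{\ad\bar\rho}=0$ for every such $g,n$ is exactly the content of \Cref{rigidityinlowlevel} applied to the quotient groups $\PModl{5}{S_g^n}$. Feeding the two inputs into the correspondence upgrades the flat projective bundle to a complex variation of Hodge structure, which is the assertion.

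The real obstacle is entirely contained in the rigidity input \Cref{rigidityinlowlevel}: there is no a priori reason for these representations to be isolated in their character varieties, and establishing this is what occupies the body of the paper. Once that is granted, the only remaining points require care rather than work: that the non-abelian Hodge correspondence genuinely holds in the orbifold setting — here one uses that $\overline{\mathcal{M}_{g,n}}(5)$ is uniformizable and Kähler, so that one may pass to a finite smooth cover and descend equivariantly, or invoke orbifold Higgs theory directly — and that the projective anomaly of \Cref{remarkanomaly} is harmless, since rigidity and semisimplicity are unchanged by twisting by a character of a central extension and the resulting Hodge data descend to the $\mathrm{PGL}_d$-bundle. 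I would also double-check that \Cref{rigidityinlowlevel} indeed covers the low-genus and low-boundary cases with $3g-3+n\geq 0$ that are excluded from \Cref{induction} but needed here; as stated it does, since it asserts local rigidity of the $\PModl{5}{S_g^n}$-representations for all $g,n\geq 0$.
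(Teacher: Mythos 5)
Your argument is exactly the one the paper intends: combine the orbifold non-abelian Hodge correspondence for the uniformizable compact Kähler orbifold $\overline{\mathcal{M}_{g,n}}(5)$ with Simpson's Lemma 4.5 and the rigidity from \Cref{rigidityinlowlevel} (applied to $\PModl{5}{S_g^n}$), with reductivity supplied by unitarity. The paper states this as an immediate consequence of the discussion preceding the theorem; your write-up fills in the same chain of implications, including the correct observation that it is rigidity over $\PModl{5}{S_g^n}$, not $\PMod{S_g^n}$, that is needed for the low $(g,n)$ range.
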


C. Simpson's motivicity conjecture states that the complex variations of Hodge structure of rigid representations have geometric origin.
In our situation, this would mean that the flat projective bundle on $\overline{\mathcal{M}}_{g,n}(5)$
could be constructed from a sub flat bundle of the cohomology $R^mp_*\C$ of some fibration $p:E_{g,n}\ra \overline{\mathcal{M}}_{g,n}(5)$.

The existence of a complex variation of Hodge structure thus makes a link between quantum representations and geometry.
It would be interesting to study the Hodge decompositions and their compatibilities as a family in $g$ and $n$.

\subsection{Rigidity as unitary representations of the mapping class group}

After the field embedding $i:\Q(\zeta_\ell)\ra\C$ given by $\zeta_\ell=\exp(\pm i\pi\frac{\ell-1}{\ell})$,
the representations are known to be unitary.
Hence \Cref{rigidityinlowlevel} has the following corollary.

\begin{theorem}
    For $g\geq 3$ and $n\geq 0$, the representations of $\PMod{S_g^n}$ coming from the $\SO$ TQFT in level $5$ are locally rigid
    as unitary representations.
\end{theorem}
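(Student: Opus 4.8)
The plan is to deduce this statement directly from \Cref{rigidityinlowlevel} together with the general principle, noted just before \Cref{rigidityinlowlevel}, that local rigidity of these quantum representations is insensitive to the target group. Concretely, \Cref{rigidityinlowlevel} asserts that for $g\geq 3$ the level $5$ $\SO$ TQFT representation $\rho_{g,n}(\und\lambda)$ of $\PMod{S_g^n}$ is locally rigid inside $\mathrm{PGL}_d(\C)$, i.e. $\coh{\PMod{S_g^n}}{\ad\rho_{g,n}(\und\lambda)}=0$. The claim now is the same vanishing statement when $\rho_{g,n}(\und\lambda)$ is regarded as a unitary representation, i.e. local rigidity within $\mathrm{PU}(d)$ (or within $\mathrm{PU}(p,q)$, once one fixes the unitary structure coming from the embedding $i:\Q(\zeta_\ell)\to\C$, $\zeta_\ell=\exp(\pm i\pi\tfrac{\ell-1}{\ell})$, under which the representations are known to be unitary).

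First I would recall why the unitary structure exists: after the stated field embedding the Hermitian form preserved by the WRT representations is positive definite, so the image of $\rho_{g,n}(\und\lambda)$ lies in a conjugate of $\mathrm{PU}(d)\subset\mathrm{PGL}_d(\C)$. Second, I would invoke the identification of the relevant cohomology groups. The adjoint representation on $\mathfrak{sl}_d(\C)$ splits, as a real representation, as $\mathfrak{su}(d)\oplus i\,\mathfrak{su}(d)$, and the two summands are isomorphic as real $\PMod{S_g^n}$-modules (multiplication by $i$ is an isomorphism); hence $\coh{\PMod{S_g^n}}{\ad\rho}\cong \coh{\PMod{S_g^n}}{\mathfrak{su}(d)}\otimes_\R\C$ as complex vector spaces, where $\mathfrak{su}(d)$ carries the adjoint action through $\mathrm{PU}(d)$. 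Therefore the $\mathrm{PGL}_d(\C)$-cohomology vanishes if and only if the $\mathrm{PU}(d)$-cohomology vanishes, and \Cref{rigidityinlowlevel} gives the former for $g\geq 3$, $n\geq 0$. This is exactly the content of the remark preceding \Cref{rigidityinlowlevel} that "local rigidity never depends on the target group"; in the write-up I would point to \Cref{reductiontounitary} for the precise statement and simply apply it.

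The only genuine subtlety — and the step I expect to require the most care — is bookkeeping the meaning of "locally rigid as unitary representations": one must check that the deformation space (character variety) computed inside the real algebraic group $\mathrm{PU}(d)$ has Zariski tangent space $\coh{\PMod{S_g^n}}{\mathfrak{su}(d)}$, so that the cohomological vanishing really does translate into the isolatedness/smoothness statement for the unitary character variety, not just for its complexification. This is standard (the tangent space to $\Hom{\Gamma}{H}/H$ at a reductive $[\rho]$ is $\mathrm{H}^1(\Gamma,\mathfrak{h})$ for $H$ reductive, and reductivity of the unitary $\rho$ is automatic since its image has compact closure), but I would state it cleanly, perhaps citing Weil's rigidity framework, and then conclude. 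With those identifications in place the proof is a one-line deduction: by \Cref{rigidityinlowlevel} (via \Cref{reductiontounitary}) the complexified first cohomology vanishes, hence so does $\coh{\PMod{S_g^n}}{\mathfrak{su}(d)}$, hence $\rho_{g,n}(\und\lambda)$ is locally rigid as a unitary representation for all $g\geq 3$ and $n\geq 0$.
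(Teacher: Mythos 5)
Your proposal is correct and follows exactly the route the paper intends: the theorem is stated as a direct corollary of \Cref{rigidityinlowlevel} combined with the unitarity of the representations under the chosen embedding $\Q(\zeta_5)\hookrightarrow\C$, with \Cref{reductiontounitary} (the $\mathfrak{su}(p,q)\oplus i\,\mathfrak{su}(p,q)=\mathfrak{sl}_d(\C)$ splitting) supplying the equivalence of the two notions of local rigidity. Your additional remarks about identifying $\mathrm{H}^1$ with the Zariski tangent space of the unitary character variety are a sensible elaboration but not a departure from the paper's argument.
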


Local rigidity of unitary representations is implied by Kazhdan's property (T).
It is not known if $\PMod{S_g}$ verifies property (T) for $g\geq 3$.
However, $\PMod{S}$ does not verify property (T) if the genus of $S$ is at most $2$,
as it has a finite index subgroup that surjects onto $\Z$ (see, for example, \cite[2.3]{aramayonaRigidityPhenomenaMapping2016}).

\subsection{Outline of the proofs}

Here we outline the proofs of \Cref{induction} and \Cref{rigidityinlowlevel}. Both proofs rely on \Cref{mainlemma}.
Given a surface $S$ and $2$ simple closed curves $a_1$, $a_2$ on $S$, we denote by $S_{a_i}$ the surface obtained by cutting $S$ along $a_i$.
Under some assumptions, the lemma relates the deformation space of a representation of $\tPModl{\ell}{S}$ to the deformation spaces 
of its restrictions to $\tPModl{\ell}{S_{a_1}}$ and $\tPModl{\ell}{S_{a_2}}$. As a consequence, we will see that if the quantum representations
associated to $S_{a_1}$ and $S_{a_2}$ are locally rigid, then so are those associated to $S$ (see \Cref{bettermainlemma}).
The proof of \Cref{mainlemma} is the content of \Cref{sectionmainlemma}. 

\Cref{mainlemma} enables us to prove \Cref{induction} by induction on the genus $g$ and the number $n$ of boundary components.
The proof of \Cref{induction} is the content of \Cref{sectioninduction}.

The proof of \Cref{rigidityinlowlevel} is in $2$ steps. The first step is to prove rigidity for some small surfaces.
More precisely, local rigidity for $S_0^2$, $S_0^3$, $S_1$ and $S_1^1$
is deduced from the finiteness of the associated mapping class group quotients.
Local rigidity for $S_0^4$ is proved with a direct dimension computation. For $S_0^5$, the proof relies on Weil's rigidity.
The second step is to perform an induction on $g$ and $n$ as in the proof of \Cref{induction}.
However, because the assumptions of \Cref{mainlemma} are not always verified,
we have to adapt the techniques of the lemma to the various cases. The proof of \Cref{rigidityinlowlevel} is the content
of \Cref{sectionrigidityinlowlevel}.

\subsection*{Acknowledgements}

This paper forms part of the PhD thesis of the author.
The author thanks J. Marché for his help in writing this paper.
The author also thanks B. Deroin, R. Detcherry, L. Funar, G. Masbaum and R. Santharoubane for helpful discussions.
Finally, the author would like to thank the Reviewer for taking the necessary time and effort to review the manuscript.

\section{Modular functors}\label{sectionmodularfunctor}

\subsection{Definition}

Even though our proof applies essentially only to the $\SO$ representations at prime levels,
we will write it in the context of modular functors to make clear which properties of the TQFT
are used. In particular, with the exception of \Cref{appendixirreducibility}, we will not refer explicitly to the construction of the TQFT.
For the general theory of modular functor, see Turaev's book \cite[chapter 5]{turaevQuantumInvariantsKnots2016}.
We will use a stripped down version of modular functors, similar to the one used by B. Deroin and J. Marché in
\cite[section 4]{deroinToledoInvariantsTopological2022}.

\begin{definition}\label{definitioncolourset}
    A set of colors is a finite set $\Lambda$ with a preferred element $0\in \Lambda$.
\end{definition}

\begin{remark}
    In the usual definition of a modular functor, there is the extra datum of an involution $\lambda\mapsto\lambda^*$
    on $\Lambda$. Here, for simplicity, the involution is assumed to be the identity, as it is trivial in our examples.
\end{remark}

\begin{definition}\label{definitionsurfaces}
    Let $g,n\geq 0$. We define $S_g^n$ to be the compact surface of genus $g$ with $n$ boundary components,
    and $S_{g,n}$ to be the surface of genus $g$ with $n$ punctures and no boundary.
\end{definition}

\begin{definition}\label{definitionlagrangian}
    Let $S$ be a compact surface, which can have non-empty boundary.
    Let $\widehat{S}$ be the closed surface obtained from $S$ by capping off each boundary component $S^1$ with $D^2$.
    A Lagrangian $L$ on $S$ is a subspace of $H_1(\widehat{S};\Q)$ of half dimension on which the intersection form vanishes.
    A split Lagrangian on $S$ is a Lagrangian $L$ that is a direct sum of Lagrangians on the connected components of $S$.
\end{definition}

\begin{definition}\label{definitionmaslovindex}
    Let $S$ be a compact oriented surface and $L_0,L_1$ and $L_2$ be $3$ split Lagrangians on $S$.
    Their Maslov index, denoted $\mu(L_0,L_1,L_2)$,
    is the signature of the quadratic form $q$ defined as follows. Let $K$ be the kernel of the sum map:
    $$L_0\oplus L_1\oplus L_2\lra H_1(\widehat{S};\Q).$$
    Then we define $q$ as:
    $$q: K \lra \Q,\;(u_0,u_1,u_2) \longmapsto u_0 \cdot u_1$$
    where $\cdot$ is the intersection form.
\end{definition}

We can now define the source category of modular functors.

\begin{definition}\label{definitioncolouredcategory}
    Let $\Lambda$ be a set of colors. The category of surfaces colored with $\Lambda$ is such that:
    \begin{description}
        \item[(1)] its objects are compact oriented surfaces $S$ together with a Lagrangian $L$ on $S$,
        an identification $\varphi_B: B\simeq S^1$ and a color $\lambda_B\in \Lambda$ for every component $B$ of $\partial S$ ;
        \item[(2)] its morphisms from $\Sigma_1=(S_1,L_1,\varphi^1,\underline{\lambda}^1)$
        to $\Sigma_2=(S_2,L_2,\varphi^2,\underline{\lambda}^2)$
        are pairs $(f,n)$ with $f:S_1\lra S_2$ an homeomorphism preserving orientation such for every component $B_1\subset\partial S_1$
        and its image $f(B_1)=B_2\subset \partial S_2$, we have $\lambda_{B_1}=\lambda_{B_2}$ and $\varphi^2_{B_2}\circ f=\varphi^1_{B_1}$.
        The second element $n$ of the pair is an integer in $\Z$.
        \item[(3)] the composition of $(f_1,n_1):\Sigma_0\lra\Sigma_1$ and $(f_2,n_2):\Sigma_1\lra\Sigma_2$ is given by:
        $$(f_2\circ f_1,n_1+n_2-\mu(f_1(L_0),L_1,f_2^{-1}(L_2)))$$
        where $L_0, L_1$ and $L_2$ are the respective Lagrangians of $\Sigma_0$, $\Sigma_1$ and $\Sigma_2$.
    \end{description}
    This category has a natural monoidal structure induced by the disjoint union $\sqcup$.
\end{definition}

In the rest of the paper, $\Sigma=(S,L,\varphi,\underline{\lambda})$ will be abbreviated $(S,\underline{\lambda})$, or even
$(S,\lambda_1,\lambda_2,\dotsc)$ where $\lambda_1,\lambda_2,\dotsc$ are the colors relevant to the argument and the other colors are omitted.

\begin{definition}\label{definitioncentralextension}
    Let $S$ be a surface with boundary. We note by $\PMod{S}$ its mapping class group, ie. the group of connected components
    of the group of orientation preserving homeomorphisms of $S$ fixing the boundary $\partial S$ pointwise.

    Let $L$ be a Lagrangian for $S$, and $\Aut{S,L}$ the group of pairs $(f,n)$ with $f\in \PMod{S}$ and $n\in\Z$
    with composition as in \textbf{(3)} of \Cref{definitioncolouredcategory}.

    The isomorphism class of $\Aut{S,L}$ does not depend on $L$. Hence we will use the notation $\tPMod{S}$ for it.
    The group $\tPMod{S}$ is a central extension of $\PMod{S}$ by $\Z$:
    $$1\lra \Z\lra \tPMod{S}\lra \PMod{S}\lra 1.$$
\end{definition}

Let $S$ be a surface and $\partial_+S\sqcup\partial_-S\subset\partial S$ be two components of its boundary.
Let $\varphi_{\partial_{\pm}S}:\partial_{\pm}S\simeq S^1$ be identifications of these components with $S^1$.

Let $S_{\pm}$ be the surface obtained from $S$ by gluing $\partial_+S$ to $\partial_-S$
along $\varphi_{\partial_-S}^{-1}\circ\varphi_{\partial_+S}$.
Then $S_{\pm}$ is called the gluing of $S$ along $\partial_{\pm}S$.

One can check that if $S$ has a Lagrangian $L$, one defines a Lagrangian in $S_{\pm}$ as follows.
There exists a $3$-manifold $M$ with boundary $\widehat{S}$ such that $L$
is the kernel of $H_1(\widehat{S};\Q)\lra H_1(M;\Q)$. Let $M_{\pm}$ be the $3$ manifold obtained by gluing together
the discs bounding $\partial_+S$ and $\partial_-S$ on the boundary of $M$.
Then $M_{\pm}$ bounds $\widehat{S_{\pm}}$ and the desired Lagrangian is the kernel of $H_1(\widehat{S_{\pm}};\Q)\lra H_1(M_{\pm};\Q)$.

We now introduce the notion of modular functor.

\begin{definition}[Modular Functor]\label{definitionmodularfunctor}
    Let $\Lambda$ be a set of colors and $\mathrm{C}$ be the associated category of colored surfaces
    as defined in \Cref{definitioncolouredcategory}.
    Then a modular functor is the data of a monoidal functor:
    $$\Nu:\mathrm{C}\lra \C-\text{vector spaces}$$
    where the monoidal structure on $\C$-vector spaces is understood to be the tensor product.
    This data is augmented by the following isomorphisms.
    \begin{description}
        \item[(G)] For any surface with Lagrangian $(S,L)$ and pair of boundary components $\partial_{\pm}S$,
        let $(S_{\pm},L_{\pm})$ be the gluing of $S$ along $\partial_{\pm}S$. For any coloring $\underline{\lambda}$
        of the components of $\partial S_{\pm}$, an isomorphism as below is given:
        $$\Nu(S_{\pm},L_{\pm},\underline{\lambda})\simeq \bigoplus_{\mu\in\Lambda}\Nu(S,L,\mu,\mu,\underline{\lambda}).$$
    \end{description}
    The isomorphisms of \textbf{(G)} are assumed to be functorial and compatible with disjoint unions.
    This rule, also sometimes called fusion or factorization rule, is the most important property of modular functors.
    The functor is also assumed to verify two more axioms:
    \begin{description}
        \item[(1)] $\dim\Nu(S_0^1,\lambda)=1$ if $\lambda=0$ and $0$ otherwise;
        \item[(2)] $\dim\Nu(S_0^2,\lambda,\mu)=1$ if $\lambda=\mu$ and $0$ otherwise.
    \end{description}
\end{definition}

\begin{remark}\label{remarklevel}
    Let $S_{\pm}$ be a colored surface constructed as a gluing of $S$ along $\partial_{\pm}S$.
    Let $\gamma$ denote the simple closed curve that is the image of $\partial_{\pm}S$ in $S_{\pm}$.
    Then the Dehn twist $T_\gamma$ acts block-diagonally on the decomposition \textbf{(G)}.

    Moreover, one can easily see that it acts on the block $\Nu(S,L,\mu,\mu,\underline{\lambda})$ by a scalar $r_\mu$,
    that depends only on $\mu$, and not on the surface $S$.
    Indeed, $r_\mu$ is given by the action of the unique Dehn twist of $S_0^2$ on $\Nu(S_0^2,\mu,\mu)$, which is $1$-dimensional.

    From the fact that on any finite-dimensional representation of $\tPMod{S}$ with $S$ of genus at least $3$,
    the Dehn twists act with quasi-unipotent matrices \cite[2.5]{aramayonaRigidityPhenomenaMapping2016},
    one deduces that for all $\mu$, $r_\mu$ is a root of unity.

    Similarly, one can prove that $(\id_\Sigma,1)$ acts by a scalar $\kappa$ on $\Nu(\Sigma)$ and that $\kappa$
    is independent of $\Sigma$.
\end{remark}

\begin{definition}
    Let $\Nu$ be a modular functor.
    A level for $\Nu$ is an integer $\ell\geq 1$ such that $\forall\lambda\in\Lambda$,
    $r_\lambda^\ell=1$ and $\kappa^{4\ell}=1$.
    For $\ell>1$ an integer and $S$ a surface, the group $\tPModl{\ell}{S}$ is defined as the quotient of $\tPMod{S}$
    by the subgroup generated by $(\id,1)^{4\ell}$ and the $\ell$-th powers of the Dehn twists $(T_\gamma,0)^\ell$
    for every $\gamma$ {\scc} in the Lagrangian $L$ such that $\tPMod{S}=\Aut{S,L}$.
\end{definition}

We can now define the representations studied in this article:

\begin{definition}[Quantum representations]
    Let $\Nu$ be a modular functor and $\ell$ be a level for $\Nu$.
    Then, for any surface $S$ and coloring $\underline{\lambda}$ of its boundary components, the functor yields a representation:
    $$\rho_{g,n}(\underline{\lambda}):\tPMod{S}\lra \GL{\Nu(S,\underline{\lambda})}$$
    which factors as a representation:
    $$\rho_{g,n}^\ell(\underline{\lambda}):\tPModl{\ell}{S}\lra \GL{\Nu(S,\underline{\lambda})}.$$

    These latter representations will be called the representations associated to the modular functor $\Nu$.

    We shall say that $\Nu$ is rigid on $(S,\underline{\lambda})$ if $\rho_{g,n}^\ell(\underline{\lambda})$ is cohomologically rigid,
    ie. if $\coh{\tPModl{\ell}{S}}{\ad \rho_{g,n}^\ell(\underline{\lambda})}=0$.
\end{definition}

We now introduce extra properties that we will need in our proof of rigidity:
\begin{description}
    \item[(I)] For every $\lambda\in\Lambda$, $\Nu(S_1^1,\lambda)\neq 0$;
    \item[(II)] For every $\lambda,\mu,\nu\in\Lambda$, $\Nu(S_0^3,\lambda,\mu,\nu)$ has dimension $0$ or $1$;
    \item[(III)] For every $\lambda,\mu\in\Lambda$, if $\lambda\neq\mu$, then $r_\lambda\neq r_\mu$.
\end{description}
\textbf{(I)} can be equivalently rephrased:
\begin{description}
    \item[(I)] For every $\lambda\in\Lambda$, there exists $\mu\in\Lambda$ such that $\Nu(S_0^3,\lambda,\mu,\mu)\neq 0$.
\end{description}

To the author's knowledge, these properties essentially restrict to the case of the $\SO$ modular functors at prime levels.

\subsection{Bases}\label{modularfunctorbases}

We will now explain how to construct bases of the $\Nu(S,\underline{\lambda})$ for a modular functor $\Nu$ satisfying \textbf{(II)}.

\begin{definition}
    Let $\Nu$ be a modular functor satisfying \textbf{(II)}.
    For $\lambda,\mu,\nu\in\Lambda$, we say that $(\lambda,\mu,\nu)$ is admissible if $\Nu(S_0^3,\lambda,\mu,\nu)\neq 0$.
\end{definition}

Let $(S,\underline{\lambda})$ be a colored surface and $\{e_i\}$ be a set of disjoint simple closed curves on $S$
that induces a pair of pants decomposition, ie. such that cutting along the curves yields a disjoint union of surfaces,
each homeomorphic to $S_0^3$. Let us denote $S_{\mathrm{cut}}$ this disjoint union.

\begin{figure}
    \def\svgwidth{0.8\linewidth}
    %% Creator: Inkscape 1.2.2 (b0a8486, 2022-12-01), www.inkscape.org
%% PDF/EPS/PS + LaTeX output extension by Johan Engelen, 2010
%% Accompanies image file 'graph_example.pdf' (pdf, eps, ps)
%%
%% To include the image in your LaTeX document, write
%%   \input{<filename>.pdf_tex}
%%  instead of
%%   \includegraphics{<filename>.pdf}
%% To scale the image, write
%%   \def\svgwidth{<desired width>}
%%   \input{<filename>.pdf_tex}
%%  instead of
%%   \includegraphics[width=<desired width>]{<filename>.pdf}
%%
%% Images with a different path to the parent latex file can
%% be accessed with the `import' package (which may need to be
%% installed) using
%%   \usepackage{import}
%% in the preamble, and then including the image with
%%   \import{<path to file>}{<filename>.pdf_tex}
%% Alternatively, one can specify
%%   \graphicspath{{<path to file>/}}
%% 
%% For more information, please see info/svg-inkscape on CTAN:
%%   http://tug.ctan.org/tex-archive/info/svg-inkscape
%%
\begingroup%
  \makeatletter%
  \providecommand\color[2][]{%
    \errmessage{(Inkscape) Color is used for the text in Inkscape, but the package 'color.sty' is not loaded}%
    \renewcommand\color[2][]{}%
  }%
  \providecommand\transparent[1]{%
    \errmessage{(Inkscape) Transparency is used (non-zero) for the text in Inkscape, but the package 'transparent.sty' is not loaded}%
    \renewcommand\transparent[1]{}%
  }%
  \providecommand\rotatebox[2]{#2}%
  \newcommand*\fsize{\dimexpr\f@size pt\relax}%
  \newcommand*\lineheight[1]{\fontsize{\fsize}{#1\fsize}\selectfont}%
  \ifx\svgwidth\undefined%
    \setlength{\unitlength}{253.71149258bp}%
    \ifx\svgscale\undefined%
      \relax%
    \else%
      \setlength{\unitlength}{\unitlength * \real{\svgscale}}%
    \fi%
  \else%
    \setlength{\unitlength}{\svgwidth}%
  \fi%
  \global\let\svgwidth\undefined%
  \global\let\svgscale\undefined%
  \makeatother%
  \begin{picture}(1,0.37170022)%
    \lineheight{1}%
    \setlength\tabcolsep{0pt}%
    \put(0,0){\includegraphics[width=\unitlength,page=1]{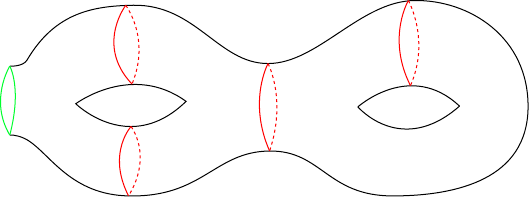}}%
    \put(0.00581519,0.26896511){\color[rgb]{0,1,0.18431373}\makebox(0,0)[lt]{\lineheight{1.25}\smash{\begin{tabular}[t]{l}$\lambda$\end{tabular}}}}%
    \put(0,0){\includegraphics[width=\unitlength,page=2]{graph_example.pdf}}%
  \end{picture}%
\endgroup%

    \caption{Pair of pants decomposition of a surface and associated graph, with leg in green.}
    \label[figure]{graph_example}
\end{figure}

Let $G=(V,H,v:H\ra V,\iota:H\ra H)$ be the trivalent graph defined by vertices and half-edges such that:
\begin{itemize}
    \item the vertices $V$ are the components of $S_{\mathrm{cut}}$;
    \item the half-edges $H$ are the components of $\partial S_{\mathrm{cut}}$;
    \item the attaching map $v:H\ra V$ sends a half edge $h$ to the component of $S_{\mathrm{cut}}$ on which it sits;
    \item $\iota$ is an involution of $H$;
    \item The set $E$ of edges of $G$ is the set of $2$-cycles of $\iota$. It identifies with the chosen set of curves $\{e_i\}$ on $S$;
    \item The set $L$ of legs of $G$ is the set of fixed points of $\iota$. It identifies with the set of components of $\partial S$.
\end{itemize}
An example is given on \Cref{graph_example}.

By a coloring of $G$, we mean a map $c:E\lra \Lambda$. We extend a coloring to $L$ by sending a leg to its associated color in $S$.
Alternatively, we can see $c$ as a map $c:H\lra \Lambda$ such that $c\circ\iota=c$ and $c_{\mid L}$ coincides with the coloring of the components of $\partial S$.

Such a coloring $c$ is said to be admissible if for any vertex $v$ and adjacent half-edges $h_1$, $h_2$ and $h_3$,
the triplet $(c(h_1),c(h_2),c(h_3))$ is admissible. We denote by $C(G)$ the set of admissible colorings of $G$.

By using the gluing axiom \textbf{(G)} repeatedly, one gets a decomposition:
$$\Nu(S,\underline{\lambda})=\bigoplus_{c:E\rightarrow \Lambda}\bigotimes_{v\in V}\Nu(S_0^3,c(h_1),c(h_2),c(h_3))$$
where the $h_i$ are the half-edges adjacent to $v$.

We can remove the non-admissible colorings to obtain a decomposition of $\Nu(S,\underline{\lambda})$ in vector spaces of dimension $1$:
$$\Nu(S,\underline{\lambda})=\bigoplus_{c\in C(G)}\bigotimes_{v\in V}\Nu(S_0^3,c(h_1),c(h_2),c(h_3)).$$

In particular, $\dim\Nu(S,\underline{\lambda})=\left\vert C(G)\right\vert$.

\subsection{\texorpdfstring{The $\SO$ modular functors}{The SO(3) modular functors}}

Let $\ell\geq 5$ be an odd integer. Let $\Lambda=\{0,2,\dotsc,\ell-3\}$.

Then there exists a modular functor $\Nu_\ell$ on the surfaces colored with $\Lambda$,
called the $\SO$ modular functor of level $\ell$.

We refer to the work of C. Blanchet, N. Habegger, G. Masbaum, and P. Vogel \cite{blanchetTopologicalQuantumField1995} for a construction of the $\SO$ modular functors.
Only the following properties of these functors will be used in this paper.

\begin{prop}\label{SOmodularfunctor}
    Let $\ell\geq 5$ be an odd number. Then the modular functor $\Nu_\ell$ satisfies properties \textbf{(I)} and \textbf{(II)}.

    Moreover, if $\ell$ is a prime number, $\Nu_\ell$ satisfies property \textbf{(III)}.
\end{prop}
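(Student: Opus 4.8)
The plan is to verify properties \textbf{(I)}, \textbf{(II)} and \textbf{(III)} directly from the combinatorial description of the $\SO$ modular functor in level $\ell$, where the admissible triples and the twist eigenvalues $r_\lambda$ have explicit formulas in the literature (Blanchet--Habegger--Masbaum--Vogel). Recall that in this normalization $\Lambda=\{0,2,\dots,\ell-3\}$, so the colors are even integers in a restricted range, and a triple $(\lambda,\mu,\nu)$ of colors is admissible precisely when it satisfies the usual Clebsch--Gordan-type conditions: the triangle inequalities $|\lambda-\mu|\le\nu\le\lambda+\mu$, the parity condition that $\lambda+\mu+\nu$ is even (automatic here), and the quantum ceiling condition $\lambda+\mu+\nu\le 2(\ell-2)$.

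\textbf{Property (II).} First I would recall that for the $\SO$ modular functors one has $\dim\Nu(S_0^3,\lambda,\mu,\nu)\in\{0,1\}$, with the value $1$ exactly for admissible triples. This is part of the standard construction: the pair-of-pants spaces are the (one-dimensional) spaces spanned by a single admissible trivalent vertex, in contrast to the $\mathrm{SU}(2)$ theory at even levels where multiplicities can appear. So \textbf{(II)} holds essentially by definition of the functor, and I would just cite the relevant statement in \cite{blanchetTopologicalQuantumField1995}.

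\textbf{Property (I).} Using the rephrased form, I need: for every $\lambda\in\Lambda$ there is $\mu\in\Lambda$ with $(\lambda,\mu,\mu)$ admissible, equivalently $\Nu(S_1^1,\lambda)\ne 0$. The triple $(\lambda,\mu,\mu)$ is admissible iff $\lambda\le 2\mu$ (triangle inequalities, the others being implied) and $\lambda+2\mu\le 2(\ell-2)$. For a given even $\lambda$ with $0\le\lambda\le\ell-3$, I would pick $\mu=\lambda$ if $3\lambda\le 2(\ell-2)$, which covers the small colors; for the remaining (larger) colors one checks that a suitable $\mu$ in the window $[\lambda/2,\ell-2-\lambda/2]$ still exists because $\lambda\le\ell-3$ forces this interval to be nonempty and to contain an element of $\Lambda$. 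This is a short explicit verification; the only mild care is the parity/range bookkeeping in the $\SO$ normalization.

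\textbf{Property (III).} This is where primality of $\ell$ enters and is the main point. The twist eigenvalue on the color $\lambda$ is $r_\lambda=\zeta$ raised to an explicit quadratic expression in $\lambda$ — in the standard normalization something proportional to $\lambda(\lambda+2)$ (times a fixed constant depending on $\ell$), evaluated at a primitive $2\ell$-th or $\ell$-th root of unity. So I would reduce \textbf{(III)} to the statement that the function $\lambda\mapsto\lambda(\lambda+2)\bmod{(\text{the relevant modulus involving }\ell)}$ is injective on $\Lambda=\{0,2,\dots,\ell-3\}$. If $r_\lambda=r_\mu$ then $\lambda(\lambda+2)\equiv\mu(\mu+2)$, i.e. $(\lambda-\mu)(\lambda+\mu+2)\equiv 0$ modulo a modulus which, because $\ell$ is prime, is (up to units) $\ell$ itself. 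Hence $\ell\mid(\lambda-\mu)$ or $\ell\mid(\lambda+\mu+2)$. Since $\lambda,\mu\in\{0,\dots,\ell-3\}$, we have $|\lambda-\mu|<\ell$, forcing $\lambda=\mu$ in the first case; and $2\le\lambda+\mu+2\le 2\ell-4<2\ell$, so $\ell\mid(\lambda+\mu+2)$ would force $\lambda+\mu+2=\ell$, i.e. $\lambda+\mu=\ell-2$, which is impossible with $\lambda,\mu$ both even and $\ell$ odd (the sum of two even numbers is even, but $\ell-2$ is odd). Therefore $\lambda=\mu$, proving \textbf{(III)}.

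\textbf{Main obstacle.} The conceptual content is entirely in \textbf{(III)}, and there the only real work is to write down the correct formula for $r_\lambda$ in the conventions of \cite{blanchetTopologicalQuantumField1995} (getting the constant and the order of the root of unity right) and to run the divisibility argument; the parity observation that $\ell-2$ is odd while $\lambda+\mu$ is even is exactly the place where we use that the colors are even, and primality of $\ell$ is what collapses the modulus so that $\ell\mid(\lambda-\mu)(\lambda+\mu+2)$ splits into the two clean cases. Properties \textbf{(I)} and \textbf{(II)} are routine once the admissibility conditions are recalled.
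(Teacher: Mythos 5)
Your proposal is correct and takes the same approach the paper does: cite \cite{blanchetTopologicalQuantumField1995} for \textbf{(I)} and \textbf{(II)}, and deduce \textbf{(III)} from the explicit formula $r_\lambda=\zeta_\ell^{\lambda(\lambda+2)}$ via the factorization $(\lambda-\mu)(\lambda+\mu+2)$, primality of $\ell$, and the parity clash ($\lambda+\mu$ even versus $\ell-2$ odd). The paper simply calls \textbf{(III)} ``easily deduced'' and gives references for the twist formula, so you have essentially filled in exactly the calculation the author had in mind; your slight hedge about whether $\zeta_\ell$ has order $\ell$ or $2\ell$ is resolved by the paper's normalization $\zeta_\ell=\exp(\pm i\pi(\ell-1)/\ell)$, which is a primitive $\ell$-th root since $\ell$ is odd.
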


\begin{proof}
    For properties \textbf{(I)} and \textbf{(II)}, see \cite{blanchetTopologicalQuantumField1995}.
    Property \textbf{(III)} is easily deduced from the fact that $r_\lambda=\zeta_\ell^{\lambda(\lambda+2)}$.
    See \cite[section 3]{blanchetThreemanifoldInvariantsDerived1992} or \cite[Lemma 2.5]{marcheIntroductionQuantumRepresentations2021}
    for the computation of $r_\lambda$.
\end{proof}

\begin{prop}\label{colourconditions}
    Let $a,b,c\in\Lambda=\{0,2,\dotsc,\ell-3\}$. Then $(a,b,c)$ is admissible for $\Nu_\ell$ if and only if:
    \begin{itemize}
        \item $a,b,c$ verify triangular inequalities, ie. $|a-b|\leq c \leq a+b$;
        \item $a+b+c<2\ell-2$.
    \end{itemize}
\end{prop}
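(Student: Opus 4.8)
The plan is to characterize admissibility of a triple $(a,b,c)$ in $\Lambda = \{0,2,\dots,\ell-3\}$ by translating the combinatorial structure of the $\SO$ modular functor into the classical fusion rules for $\SU$ at level $\ell-2$ (the $\SO$ theory at level $\ell$ is the ``even part'' of the $\SU$ theory, so colors $0,2,\dots,\ell-3$ correspond to spins $0,1,\dots,\frac{\ell-3}{2}$). The admissible triples for $\SU$ at level $k = \ell-2$ are exactly those satisfying the triangle inequalities together with the quantum constraint $a+b+c \leq 2k = 2\ell-4$; but one must be careful about the parity conventions and which normalization of the level is being used. I would instead argue directly from the properties of $\Nu_\ell$ as a modular functor, using the gluing axiom to reduce to small cases, rather than invoking $\SU$/$\SO$ dictionary, so that the proof is self-contained given the references \cite{blanchetTopologicalQuantumField1995}.

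First I would recall from \cite{blanchetTopologicalQuantumField1995} that $\dim \Nu_\ell(S_0^3, a, b, c)$ is governed by the same combinatorics as the Verlinde formula for the relevant theory, and that the dimension is either $0$ or $1$ by property \textbf{(II)} (already established in \Cref{SOmodularfunctor}). So the content is purely to pin down \emph{when} it is nonzero. I would show both implications: (i) if $(a,b,c)$ violates a triangle inequality or has $a+b+c \geq 2\ell-2$, then $\Nu_\ell(S_0^3,a,b,c) = 0$; (ii) if both conditions hold, then $\Nu_\ell(S_0^3,a,b,c) \neq 0$. For (i), the triangle inequality failure is handled by the classical fusion rules (restriction of the $\SU$ Clebsch–Gordan condition to even highest weights — note $a,b,c$ even forces $a+b+c$ even, so ``$<2\ell-2$'' and ``$\leq 2\ell-3$'' and ``$\leq 2\ell-4$'' coincide, which is worth a remark); the quantum bound $a+b+c < 2\ell-2$ is the ``affine'' reflection constraint at the boundary of the Weyl alcove.

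For (ii), the cleanest route is induction: start from obvious base cases such as $(\lambda,\lambda,0)$ (admissible, since $\dim\Nu_\ell(S_0^3,\lambda,\lambda,0) = \dim\Nu_\ell(S_0^2,\lambda,\lambda) = 1$ by axiom \textbf{(2)} and the gluing axiom \textbf{(G)} applied to cap off one leg), and then build up general admissible triples by repeatedly ``fusing'' — i.e., using \textbf{(G)} on a genus-zero four-holed sphere $S_0^4$ to see that if $(a,b,x)$ and $(x,c,d)$ are admissible for some $x$ then certain neighbours are too, and tracking which $(a,b,c)$ one reaches. Equivalently, I would verify the nonvanishing via the explicit basis/tensor-product decomposition of \Cref{modularfunctorbases}: one can compute $\dim\Nu_\ell$ of a suitable surface two ways and extract the claim. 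The Verlinde formula for $\Nu_\ell(S_1, \lambda)$ or $\Nu_\ell(S_0^4, \dots)$ gives a clean closed form from which the admissibility region can be read off.

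The main obstacle will be (ii): producing a genuinely self-contained proof of \emph{nonvanishing} without simply quoting the Verlinde formula or the $\SU$–$\SO$ correspondence. The gluing axiom only ever relates dimensions by sums over $\mu\in\Lambda$, so to show a space is nonzero one needs a positivity input — either the explicit Verlinde formula (which for these theories is a character sum that is manifestly a sum of terms one can bound below), or an explicit nonzero vector from the skein-theoretic model of \cite{blanchetTopologicalQuantumField1995} (the Jones–Wenzl idempotents and the fact that the relevant theta-symbol / trihedron coefficient is nonzero precisely in the stated range). I expect the honest write-up simply cites \cite{blanchetTopologicalQuantumField1995} for the evaluation of the trihedron coefficient and its vanishing locus, reducing the proof to checking that ``nonzero trihedron coefficient'' is equivalent to the two displayed inequalities — a short, purely arithmetic verification about when a product of quantum factorials/quantum integers avoids zero, where the zeros of quantum integers $[m]$ occur exactly when $\ell \mid m$, which is what produces the bound $a+b+c < 2\ell-2$.
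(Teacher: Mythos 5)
The paper states \Cref{colourconditions} without any proof at all: it is taken as a standard fact about the $\SO$ TQFT, implicitly from \cite{blanchetTopologicalQuantumField1995}, just as with the properties in \Cref{SOmodularfunctor}. Your proposal correctly identifies that the honest write-up reduces to citing BHMV for the evaluation and vanishing locus of the trihedron/theta coefficient (and your parity observation that $<2\ell-2$, $\leq 2\ell-3$, and $\leq 2\ell-4$ coincide for even colors is correct), so this matches the paper's approach.
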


The following proposition follows from a result of \cite{koberdaIrreducibilityQuantumRepresentations2018},
where they apply the method of \cite{robertsIrreducibilityQuantumRepresentations2001} to a cleverly chosen special case of a proposition of
\cite{blanchetTopologicalQuantumField1995}. See \Cref{appendixirreducibility} for details. 

\begin{prop*}[\textbf{\ref{irreducibility}}]
    Let $\ell\geq 5$ be a prime number. Then for any colored surface $(S,\underline{\lambda})$,
    the representation of $\tPMod{S}$ on $\Nu_\ell(S,\underline{\lambda})$ is irreducible.
\end{prop*}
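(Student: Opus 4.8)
The plan is to reduce to a base case where an explicit irreducibility statement is available, then propagate it using the gluing axiom \textbf{(G)} and the action of Dehn twists. First I would reduce to the case of a closed surface or a surface with a single boundary component: using the factorization $\Nu(S_g^n,\underline{\lambda}) \simeq \bigoplus_\mu \Nu(S_g^{n-1},\mu,\underline{\lambda})$ obtained by capping a boundary component after gluing in a pair of pants (equivalently, viewing $S_g^{n-1}$ with one extra leg colored $\mu$), one sees that an irreducible summand for every boundary coloring on the smaller surface, together with the fact that Dehn twists around the relevant curve separate the summands by property \textbf{(III)}, forces irreducibility upstairs. So the combinatorial heart is: given that the representations on simpler colored surfaces are irreducible and that distinct colors give distinct twist eigenvalues, show that no proper invariant subspace can survive.

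The key mechanism I would isolate is the following. Pick a pair of pants decomposition with dual graph $G$, giving the basis decomposition $\Nu(S,\underline{\lambda}) = \bigoplus_{c\in C(G)} \bigotimes_v \Nu(S_0^3,\dots)$ from Section \ref{modularfunctorbases}. For an edge $e$ of $G$ corresponding to a curve $\gamma_e$, the Dehn twist $T_{\gamma_e}$ acts on the line indexed by $c$ by the scalar $r_{c(e)}$; by property \textbf{(III)} these scalars are distinct for distinct colors, so the generalized eigenspace decomposition of $T_{\gamma_e}$ refines the coarser decomposition $\bigoplus_{\lambda} (\text{colorings with } c(e)=\lambda)$. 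An invariant subspace $W$ must therefore be compatible with this decomposition for every edge simultaneously. Running this over all edges of $G$, any invariant subspace is a direct sum of pieces indexed by \emph{sets} of admissible colorings. Next I would use, for each vertex/pair-of-pants and its neighbours, the irreducibility of the representation on the subsurface consisting of that pair of pants glued to its neighbours (a genus $\leq 1$ piece with few boundary components, for which irreducibility is the cited result of \cite{koberdaIrreducibilityQuantumRepresentations2018,robertsIrreducibilityQuantumRepresentations2001}): the corresponding mapping class elements (braid/point-pushing type generators, half-twists on $S_0^3$-neighbourhoods, and the twist $T_{\gamma_e}$ itself generating enough of the local group) mix the colorings that differ only at one edge, so $W$ cannot separate two admissible colorings that are connected by a single admissible "move." Finally I would invoke connectivity of the graph of admissible colorings of $G$ under these elementary moves — this is where \textbf{(I)}, the triangle/quantum-Clebsch–Gordan conditions of \Cref{colourconditions}, and primality of $\ell$ enter — to conclude that $W$ is everything or zero.

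The main obstacle is the connectivity step: one must show that the set $C(G)$ of admissible colorings, with edges given by "change the color on one edge of $G$," is connected, and moreover that each such elementary change is genuinely implemented by an element of $\PMod{S}$ acting non-trivially between the corresponding one-dimensional pieces (not just permuting a basis, but actually having a non-zero off-diagonal block, so that $W$ is forced to contain both or neither). Degenerate configurations — colorings near the "boundary" of the admissibility polytope where a pair of pants has a repeated color or a $0$, or small-genus/low-boundary pieces where the local mapping class group is too small — are exactly the cases that must be handled by hand, and this is presumably why the cited papers go through a "cleverly chosen special case" rather than a uniform argument. I would organize the write-up so that \Cref{appendixirreducibility} quotes the precise irreducibility statements from \cite{koberdaIrreducibilityQuantumRepresentations2018} for the base surfaces and then performs the gluing/eigenvalue-separation induction sketched above, with the color-graph connectivity verified using \Cref{colourconditions} and property \textbf{(I)}.
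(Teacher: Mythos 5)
Your proposed proof is structurally very different from the one in the paper, and the route you sketch has real unfilled gaps. The paper does not do a gluing/induction argument at all: it cites Koberda--Santharoubane (Proposition \ref{propramanujan}), which says that the Kauffman bracket skein algebra $\mathcal{S}(S_g^n)$ surjects onto $\End{\Nu_\ell(S_{g,n},\underline{\lambda})}$ for every $g,n$ simultaneously, with no induction required. Primality of $\ell$ then enters in a single place: for $\ell$ prime, each curve operator $C_\beta$ is a polynomial in the Dehn twist $T_\beta$, so since the $C_\beta$ generate the skein algebra, $\C[\PMod{S_g^n}]$ already surjects onto $\End{\Nu_\ell}$, and irreducibility is immediate. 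No eigenvalue separation, no coloring-graph connectivity, no base-case bootstrap.

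The mechanism you propose (diagonalize along a pants decomposition via \textbf{(III)}, then mix blocks via local mapping classes, then invoke connectivity of the admissible-coloring graph) is an honest alternative strategy in spirit, but it would require substantial new work that the paper avoids entirely. In particular, two steps are genuinely missing and not merely technical: (a) connectivity of $C(G)$ under single-edge recolorings is not automatic from \Cref{colourconditions} and \textbf{(I)} -- it needs an argument, and degenerate colorings near the boundary of the admissibility polytope are exactly the hard part; (b) even once $W$ is forced to be a sum of coloring-indexed lines, showing that a braid/half-twist generator has nonzero matrix coefficient between two specific adjacent colorings requires knowing nonvanishing of entries of the $F$-move/$S$-matrix, which is a computation you would have to import from the TQFT literature, and is essentially equivalent in difficulty to what Roberts and Koberda--Santharoubane already prove. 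You also risk circularity if the irreducibility on the ``genus $\leq 1$ subsurface with few boundary components'' is not established independently first. In short: your approach could plausibly be completed, but it would reproduce, in a less efficient form, precisely the content of the skein-algebra surjectivity theorem that the paper simply cites.
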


\section{Main results}

\subsection{General induction results}

We say that a modular functor $\Nu$ is irreducible on a surface $S$ if for any coloring $\underline{\lambda}$ of the components of $\partial S$,
the representation of $\tPMod{S}$ on $\Nu(S,\underline{\lambda})$ is irreducible.
More generally, if $\partial S=\partial_cS\sqcup\partial_uS$ and $\underline{\mu}$ is a coloring of the components of $\partial_cS$,
we say that $\Nu$ is irreducible on $(S,\underline{\mu})$ if for any coloring $\underline{\lambda}$ of $\partial_uS$,
the representation of $\tPMod{S}$ on $\Nu(S,\underline{\mu},\underline{\lambda})$ is irreducible.

By extension, we say that $\Nu$ is irreducible in genus $g\geq k$ if for any surface $S$ of genus at least $k$,
$\Nu$ is irreducible on $S$.

The following theorem reduces local rigidity in genus $g\geq 3$ to local rigidity in genus $3$ with at most $3$ boundary components.

\begin{theorem}\label{induction}
    Let $\Nu$ be a modular functor satisfying \textbf{(I)}, \textbf{(II)} and \textbf{(III)} that is irreducible in genus $g\geq 1$.
    Let $\ell>1$ be a level for $\Nu$.

    Assume that for $n\in\{0,1,2,3\}$ and every coloring $\und{\lambda}$ of $\partial S_3^n$, the representation of $\tPMod{S_3^n}$
    on $\Nu(S_3^n,\und{\lambda})$ is locally rigid within $\mathrm{GL}_d(\C)$.

    Then for all $g\geq 3$, $n\geq 0$ and coloring $\und{\lambda}$ of $\partial S_g^n$, the representation of $\tPMod{S_g^n}$
    on $\Nu(S_g^n,\und{\lambda})$ is locally rigid within $\mathrm{GL}_d(\C)$.
\end{theorem}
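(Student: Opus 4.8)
The plan is to run a double induction on the genus $g$ and then on the number of boundary components $n$, using \Cref{mainlemma} (the ``main lemma'' alluded to in the outline) as the inductive engine: it says that if $a_1,a_2$ are two suitably chosen simple closed curves on $S$ and the quantum representations attached to the cut surfaces $S_{a_1}$ and $S_{a_2}$ are locally rigid, then so is the one attached to $S$. The base case of the induction is precisely the hypothesis of the theorem, namely local rigidity of $\tPMod{S_3^n}$ on $\Nu(S_3^n,\und\lambda)$ for all $n\in\{0,1,2,3\}$ and all colorings. So the entire content of the proof is a careful bookkeeping argument showing that every $(S_g^n,\und\lambda)$ with $g\geq 3$ can be reached from these base cases by successively cutting along pairs of curves to which \Cref{mainlemma} applies.

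First I would reduce the number of boundary components. Fix $g\geq 3$ and suppose rigidity is known for $S_g^{n}$ with the appropriate range of colorings; given a surface $S_g^{n+1}$ with a coloring $\und\lambda$ of its $n+1$ boundary components, I would pick a non-separating simple closed curve $a$ that, after capping, lets me realize $S_g^{n+1}$ as a gluing of a surface of the form $S_{g-1}^{\,n+3}$ or $S_g^{\,n-1}$ along a pair of boundary components, and then choose a second curve $a_2$ so that both cut surfaces have strictly smaller complexity (smaller $g$, or the same $g$ with fewer boundary components, or reducible in a way handled by a separate case). The irreducibility hypothesis (irreducible in genus $g\geq 1$) combined with properties \textbf{(I)}, \textbf{(II)}, \textbf{(III)} is exactly what is needed to verify the hypotheses of \Cref{mainlemma} for these cuts — in particular property \textbf{(III)}, $r_\lambda\neq r_\mu$ for $\lambda\neq\mu$, is what forces the decomposition \textbf{(G)} to be the isotypic decomposition for the Dehn twist $T_\gamma$, so that a cohomology class on $\tPModl{\ell}{S}$ is controlled by its restrictions to the two cut subsurfaces. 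Then I would do a downward induction on $g$: to handle $S_g^n$ with $g\geq 4$, cut along a pair of non-separating curves reducing to surfaces of genus $g-1$ (with a controlled number of boundary components, which is why the boundary-reduction step must be available at every genus), eventually landing on genus $3$.

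The main obstacle I expect is the verification that the hypotheses of \Cref{mainlemma} are genuinely satisfied at each step, and managing the bookkeeping so that the induction is well-founded. Concretely: (i) one must check that the two curves $a_1,a_2$ can be chosen so that the cut surfaces are connected (or, if disconnected, that the pieces have genus $\geq 3$ or are among the small surfaces whose rigidity is trivial), and that they are non-isotopic and intersect appropriately so that \Cref{mainlemma} — rather than some ad hoc variant — literally applies; this is where the assumption ``genus $g\geq 3$'' is used, since below genus $3$ the mapping class group may fail to be large enough (e.g. it can surject onto $\Z$, as noted in the introduction) for the lemma's irreducibility/cohomological inputs to hold. (ii) One must pick a well-founded complexity measure on pairs $(g,n)$ — something like the lexicographic order on $(g,n)$ with $g$ decreasing down to $3$ and, at $g=3$, $n$ decreasing to the base range $\{0,1,2,3\}$ — and check every cutting move strictly decreases it. Everything else (the functoriality of \textbf{(G)}, the quasi-unipotence of Dehn twists giving that $r_\mu$ is a root of unity, hence that the representation factors through $\tPModl{\ell}{S}$) is already in place from \Cref{sectionmodularfunctor}, so once the combinatorial scaffolding is set up the proof is essentially a finite verification feeding into \Cref{mainlemma}.
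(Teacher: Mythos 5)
Your high-level plan --- a double induction on genus and boundary number, driven by \Cref{mainlemma} (really \Cref{bettermainlemma}), with the genus-$3$, $n\le 3$ cases as the base --- is the same as the paper's, and the order of reduction (genus $g\ge 4$ reduced to genus $g-1$ at the cost of adding boundary components, then a separate induction on $n$ at genus $3$) is also what the paper does. However, the proposal leaves out the one point on which the whole argument actually turns: \Cref{mainlemma} is not a bare ``rigidity of the cut pieces implies rigidity of $S$'' statement. Besides the two curves $a_1,a_2$ it requires a separating multicurve $c$ with $a_1,a_2$ on opposite sides, together with a coloring $\underline{\lambda_c}$ such that $\Nu(S_{a_1,a_2},\mu_1,\mu_2)\neq 0$ implies $\Nu(S_{a_1,a_2,c},\mu_1,\mu_2,\underline{\lambda_c})\neq 0$ for all $\mu_1,\mu_2$. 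You never mention $c$ or this non-vanishing hypothesis, and verifying it (via property \textbf{(I)} and the combinatorics of admissible colorings of the pants-decomposition graph, with $\lambda_c=0$) is precisely the nontrivial step in the paper's induction. Without specifying the pair-of-pants graph and the three cuts $a_1,a_2,c$, the claim that ``the hypotheses of \Cref{mainlemma} can be verified'' is an assertion, not an argument.

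Two smaller inaccuracies. First, the role you assign to property \textbf{(III)} is not quite right: it is not used inside \Cref{mainlemma} to make \textbf{(G)} an isotypic decomposition, but afterwards, in \Cref{crossterms} and \Cref{bettermainlemma}, to kill the cross-terms $\Nu(S_{a_i},\mu_i')^*\otimes\Nu(S_{a_i},\mu_i)$ with $\mu_i'\neq\mu_i$ so that $\coh{\PModl{\ell}{S_{a_i}}}{\ad\Nu(S)}$ reduces to $\bigoplus_{\mu_i}\coh{\PModl{\ell}{S_{a_i}}}{\ad\Nu(S_{a_i},\mu_i)}$; omitting this step you would only control the restriction of a cohomology class to the cut pieces acting on the \emph{big} module $\ad\Nu(S)$, not on the modules $\ad\Nu(S_{a_i},\mu_i)$ that the induction hypothesis knows about. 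Second, your stated reason for the restriction $g\geq 3$ (that lower-genus mapping class groups are ``not large enough'') is off: $g\geq 3$ enters only as the base of the induction and, in the statement with $\PMod{}$ rather than $\tPModl{\ell}{}$, through \Cref{removingthel} and the quasi-unipotence result \Cref{quasiunipotent}; \Cref{mainlemma} itself is applied to surfaces of arbitrary genus elsewhere in the paper. Also, cutting $S_g^{n+1}$ along a single simple closed curve does not produce $S_g^{n-1}$; the non-separating cut gives $S_{g-1}^{n+3}$ and a separating one gives a disjoint union, so that part of your bookkeeping needs correcting.
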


Note that in this context, local rigidity never depends on the target group of the representation (see \Cref{reductiontounitary})
and, when $g\geq 3$, does not depend on whether the source group is $\PMod{S_g^n}$ or $\PModl{\ell}{S_g^n}$ (see \Cref{removingthel}).

\begin{remark}
    For a fixed prime level $\ell$, this theorem reduces the local rigidity of the level $\ell$
    $\SO$ representations in genus $g\geq 3$ to a finite number of cases.
    Notice that as the mapping class groups have known finite presentations,
    the vanishing of $\coh{\tPMod{S}}{\ad\rho_{g,n}(\underline{\lambda})}$
    on a fixed colored surface $(S,\underline{\lambda})$
    can be translated into the computation of the rank of a finite linear system.
    Such a computation can be fed into a computer.

    Thus, given a prime level $\ell$, the theorem reduces the question of local rigidity
    to a problem solvable by computation (that is, if the answer happens to be yes,
    otherwise the computation will just show that some of these representations are not rigid).
\end{remark}

\subsection{\texorpdfstring{The case of the $\SO$ TQFT in level 5}{The case of the SO(3) TQFT in level 5}}

\begin{theorem}\label{rigidityinlowlevel}
    For $g\geq 0$ and $n\geq 0$, the representations of $\tPModl{5}{S_g^n}$ coming from the $\SO$ TQFT of level $5$ are locally rigid
    within $\mathrm{GL}_d(\C)$.
    They are also locally rigid as representations of $\tPMod{S_g^n}$ when $g\geq 3$.
\end{theorem}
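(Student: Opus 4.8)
The plan is to combine \Cref{induction} with a hands-on treatment of low genus, organizing the whole argument as an induction on the complexity of the surface whose base cases form a short explicit list. First I would record that the Fibonacci functor $\Nu_5$, whose color set is $\Lambda=\{0,2\}$, satisfies \textbf{(I)}, \textbf{(II)} and \textbf{(III)} by \Cref{SOmodularfunctor}, and is irreducible on every colored surface by \Cref{irreducibility}; in particular it is irreducible in genus $g\geq 1$. Granting local rigidity for $S_3^n$ with $n\in\{0,1,2,3\}$ (which is proved by the second induction below), \Cref{induction} then yields local rigidity for all $S_g^n$ with $g\geq 3$, and the statement for $\tPMod{S_g^n}$ with $g\geq 3$ follows from the one for $\tPModl{5}{S_g^n}$ by \Cref{removingthel}. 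So the task reduces to proving rigidity on all surfaces of genus $0$, $1$ and $2$, and on $S_3^n$ for $n\leq 3$.

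For these I would run a second induction, on the complexity $3g-3+n$, cutting along simple closed curves: declaring $S_0^2$, $S_0^3$, $S_0^4$, $S_0^5$, $S_1$ and $S_1^1$ to be base cases, any other colored surface $(S,\und{\lambda})$ carries two simple closed curves $a_1,a_2$ such that the cut surfaces $S_{a_1}$, $S_{a_2}$ have strictly smaller complexity — a non-separating cut lowers the genus, a separating cut splits off pieces of smaller complexity — and whenever the hypotheses of \Cref{mainlemma} are met for such a pair, \Cref{bettermainlemma} deduces rigidity of $(S,\und{\lambda})$ from rigidity on all colorings of $S_{a_1}$ and $S_{a_2}$. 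For the small surfaces where these hypotheses fail I would re-run the argument of \Cref{sectionmainlemma} by hand; this stays tractable because $\dim\Nu_5(S,\und{\lambda})$ is tiny when $\Lambda$ has only two elements. After this the only surfaces left to deal with are the six base cases.

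For $S_0^2$, $S_0^3$, $S_1$ and $S_1^1$ the quotient groups $\tPModl{5}{S}$ are finite, so $\coh{\tPModl{5}{S}}{\ad\circ\rho}=0$ with any coefficient module and rigidity is automatic. For $S_0^4$ I would compute the cohomology directly from a finite presentation of $\tPModl{5}{S_0^4}$: irreducibility of $\rho$ kills the invariants of the trace-zero part and forces the coboundary space to be as large as possible, so the vanishing of $\mathrm{H}^1$ becomes a rank statement about the linear system of cocycle equations cut out by the relators, which one verifies holds — this is the content of \Cref{S04}, and it is exactly here that $\ell=5$ is essential, since for a larger prime the coloring $(2,2,2,2)$ is already non-rigid. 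For $S_0^5$ the projective representation is, as noted in the introduction, the monodromy of a Hirzebruch surface, and local rigidity follows from Weil rigidity.

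I expect the main obstacle to be twofold. The bookkeeping part is to arrange the cutting induction so that the surfaces on which \Cref{mainlemma} genuinely fails form a finite, explicit list and to dispatch each one by the adapted argument; this ought to be routine given the very small dimensions at play. The substantive part is the two level-$5$-specific inputs: the $S_0^4$ rank computation, which has to be done completely explicitly (by hand or by computer) and is sharp in $\ell$, and the $S_0^5$ case, which requires correctly recognizing the flat bundle as the monodromy of the relevant family over a Hirzebruch surface so that Weil rigidity applies.
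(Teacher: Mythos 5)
Your overall plan matches the paper's: establish the six base cases $S_0^2$, $S_0^3$, $S_0^4$, $S_0^5$, $S_1$, $S_1^1$ directly (finiteness of the quotient groups, the explicit Mayer--Vietoris rank computation for $S_0^4$, Weil rigidity via the Hirzebruch surface for $S_0^5$), then climb by cutting and invoking \Cref{bettermainlemma}, and finally feed $S_3^n$, $n\le 3$, into \Cref{induction} and use \Cref{removingthel} to pass from $\tPModl{5}{S_g^n}$ to $\tPMod{S_g^n}$ when $g\ge 3$. All of that is correct and is what the paper does.

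The gap is in how you dispose of the colored surfaces where the hypotheses of \Cref{mainlemma} are not met. You describe these as ``small surfaces'' and claim the hand argument is tractable because $\dim\Nu_5(S,\und{\lambda})$ is tiny. That is wrong for the genus-$0$ family: for every $n\ge 6$, all curves on $S_0^n$ are separating, so the stabilizers of two disjoint curves $a_1,a_2$ split as products of mapping class groups of the complementary pieces, and these two stabilizers do \emph{not} generate $\PModl{5}{S_0^n}$. This is not a sporadic failure on a finite list but a structural one on an infinite family, and the dimension of $\Nu_5(S_0^n)$ grows like a Fibonacci number, so you cannot simply run a dimension count in the four-term Mayer--Vietoris sequence the way you can for $S_1^2$, $S_1^3$ and $S_2^1$. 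The paper deals with this (Lemmas \ref{S06} and \ref{S0n}) by introducing a third curve $a_3$, running the modified version of \Cref{mainlemma} on the subgroups $L_{12}$ and $L_{13}$ generated by pairs of stabilizers to kill $\coh{L_{12}}{M}$ and $\coh{L_{13}}{M}$, and then performing a \emph{second} amalgamation $G = L_{12}*_{\langle\Gamma_1,\Gamma_{23}\rangle}L_{13}$; the crucial observation that makes the final dimension count go through uniformly in $n$ is that, in level $5$, all eight colorings of $(a_1,a_2,a_3)$ are admissible, forcing $M^{\langle\Gamma_1,\Gamma_{23}\rangle}$ to be one-dimensional by irreducibility (\Cref{irreducibility}). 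Your proposal needs to notice this obstruction in genus $0$ and supply this nested amalgamation argument; without it the complexity induction does not close.

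A smaller, nonfatal imprecision: for the genuinely small intermediate cases ($S_1^2$, $S_1^3$, $S_2^1$), the paper does not ``re-run the argument of \Cref{sectionmainlemma}'' but uses the cruder Mayer--Vietoris sequence directly, observing that rigidity of the cut pieces makes $\mathrm{H}^1(\Gamma_i,M)=0$ (via \Cref{crossterms}) and then counting dimensions of invariants with \Cref{irreducibility} to force $\coh{G}{M}=0$. This is in the spirit of what you say and would be fine if made explicit.
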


\begin{remark}
    In level $5$ the $\SO$ TQFT has only $2$ colors: $0$ and $2$.
    \Cref{removezeros} shows that if a boundary component is colored with $0$,
    then it can be capped off without changing the deformation space.
    Thus, when proving \Cref{rigidityinlowlevel}, we can assume all the boundary components are colored with $2$.
\end{remark}

The method of the proof is in $2$ steps:
\begin{itemize}
    \item Prove local rigidity directly for $S_0^2$, $S_0^3$, $S_0^4$, $S_0^5$, $S_1$ and $S_1^1$,
    using algebraic or geometric arguments;
    \item Use \Cref{induction} or similar arguments to perform an induction on the genus and the number of boundary components.
\end{itemize}

\section{Deformations}\label{appendixrigidity}

In this section, we relate the different versions of local rigidity for the quantum representations.

Let us first relate local rigidity as linear representation and projective representation.
If $\rho:G\lra \mathrm{GL_d}(\C)$ is a linear representation, we denote by $\ad \rho$
the adjoint representation of $G$ on the space $\mathfrak{gl}_{\mathrm{d}}(\C)$.
If $\rho:G\lra \mathrm{PGL_d}(\C)$ is a projective representation, we denote by $\ad \rho$
the adjoint representation of $G$ on the space $\mathfrak{sl}_{\mathrm{d}}(\C)$ of matrices with trace $0$.
This is because the tangent space to $\mathrm{PGL_d}(\C)$ at $I_d$ is isomorphic to $\mathfrak{sl}_{\mathrm{d}}(\C)$.

\begin{prop}\label{reductiontoprojective}
    Let $g,n\geq 0$ and $\ell>1$. Let $\tilde{\rho}:\tPModl{\ell}{S_g^n}\ra \mathrm{GL_d}(\C)$ be a representation
    such that $(\id,1)$ acts by a scalar. Let $\rho:\PModl{\ell}{S_g^n}\ra \mathrm{PGL_d}(\C)$ denote the associated projective representation.

    Then $\tilde{\rho}$ is locally rigid if and only if $\rho$ is. More precisely, we have:
    $$\coh{\tPModl{\ell}{S_g^n}}{\ad\tilde{\rho}}=0
    \text{ if and only if }\coh{\PModl{\ell}{S_g^n}}{\ad\rho}=0.$$
\end{prop}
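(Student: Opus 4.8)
The plan is to compare the two adjoint representations by relating $\mathfrak{gl}_d(\C)$ and $\mathfrak{sl}_d(\C)$ via the decomposition $\mathfrak{gl}_d(\C) = \mathfrak{sl}_d(\C) \oplus \C\cdot I_d$ as modules over the group. First I would observe that $\ad\tilde\rho$, the adjoint action of $\tPModl{\ell}{S_g^n}$ on $\mathfrak{gl}_d(\C)$ coming from the linear representation $\tilde\rho$, only depends on $\tilde\rho$ up to scalars: conjugation by $\tilde\rho(\gamma)$ and by $\lambda\,\tilde\rho(\gamma)$ agree for any scalar $\lambda$. Since $(\id,1)$ acts by a scalar, $\tilde\rho$ descends, as far as its adjoint action is concerned, to the action of $\PModl{\ell}{S_g^n}$, and this adjoint action is precisely $\ad\rho$ on the summand $\mathfrak{sl}_d(\C)$, while it is trivial on the summand $\C\cdot I_d$. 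Thus I get an isomorphism of $\tPModl{\ell}{S_g^n}$-modules $\ad\tilde\rho \cong \ad\rho \oplus \C_{\mathrm{triv}}$, where $\ad\rho$ is pulled back along $\tPModl{\ell}{S_g^n}\twoheadrightarrow \PModl{\ell}{S_g^n}$ and $\C_{\mathrm{triv}}$ is the trivial module.

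The next step is to take group cohomology. Additivity of $\mathrm{H}^1$ over direct sums of coefficient modules gives
$$\coh{\tPModl{\ell}{S_g^n}}{\ad\tilde\rho} \cong \coh{\tPModl{\ell}{S_g^n}}{\ad\rho} \oplus \coh{\tPModl{\ell}{S_g^n}}{\C_{\mathrm{triv}}}.$$
For the trivial module, $\mathrm{H}^1$ is $\Hom{\tPModl{\ell}{S_g^n}}{\C}$, the space of homomorphisms to the additive group $\C$; I would note that this is $0$ because $\tPModl{\ell}{S_g^n}$ has finite abelianization — indeed $\tPMod{S_g^n}$ is a central extension by $\Z$ of $\PMod{S_g^n}$, whose abelianization is finite for the relevant ranges (and the $\Z$ gets killed by passing to the $\ell$-truncated quotient, since $(\id,1)^{4\ell}$ is trivial there), so $\tPModl{\ell}{S_g^n}$ is perfect up to finite index and admits no nontrivial map to $\C$. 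Hence the trivial-module term vanishes. Finally, for the $\ad\rho$ term, inflation-restriction along $1 \to N \to \tPModl{\ell}{S_g^n} \to \PModl{\ell}{S_g^n}\to 1$ (with $N$ the image of the central $\Z$, which acts trivially on $\mathfrak{sl}_d(\C)$ since $\tilde\rho(N)$ consists of scalars) gives an exact sequence whose relevant portion identifies $\coh{\tPModl{\ell}{S_g^n}}{\ad\rho}$ with $\coh{\PModl{\ell}{S_g^n}}{\ad\rho}$, using $\mathrm{H}^1(N,\mathfrak{sl}_d)^{\PModl{\ell}{}} = \Hom{N}{\mathfrak{sl}_d}^{\PModl{\ell}{}}$ and the fact that $N$ is a finitely generated abelian group on which the quotient acts through a finite group with no invariants in $\mathfrak{sl}_d$ compatible with the scalar constraint — or, more cleanly, since $N$ is central and acts trivially on the coefficients, $\mathrm{H}^1(N,\mathfrak{sl}_d) = \Hom{N}{\mathfrak{sl}_d}$ and one checks the transgression kills the offending classes; I would phrase it simply by noting $N \to \tPModl{\ell}{}$ has image contained in the scalars so the five-term exact sequence collapses.

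Assembling the three points: $\coh{\tPModl{\ell}{S_g^n}}{\ad\tilde\rho}$ vanishes if and only if both summands vanish, the trivial summand always vanishes, and the remaining summand is canonically $\coh{\PModl{\ell}{S_g^n}}{\ad\rho}$; this proves the equivalence and the displayed identity. The main obstacle I anticipate is making the inflation-restriction step fully rigorous, namely verifying that the central $\Z$ contributes nothing to $\mathrm{H}^1$ with coefficients in $\mathfrak{sl}_d(\C)$; the cleanest route is to use that the representation $\tilde\rho$ sends the center to scalar matrices, so $\tPModl{\ell}{S_g^n}$ acts on $\mathfrak{sl}_d(\C)$ genuinely through $\PModl{\ell}{S_g^n}$, reducing everything to the already-known comparison of cohomology along a central extension with finitely generated (indeed finite, after truncation) kernel acting trivially — at which point the five-term exact sequence, together with the vanishing of $\Hom{\text{kernel}}{\C}$-type terms, finishes the argument.
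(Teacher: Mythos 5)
Your argument is correct and follows essentially the same route as the paper: decompose $\mathfrak{gl}_d(\C)\simeq\mathfrak{sl}_d(\C)\oplus\C$ as $\tPModl{\ell}{S_g^n}$-modules, kill the trivial summand's $\mathrm{H}^1$ using finiteness of the abelianization, and then pass from $\tPModl{\ell}{S_g^n}$ to $\PModl{\ell}{S_g^n}$ via inflation-restriction using that the kernel is finite. The paper's treatment of the last step is slightly more streamlined (it simply invokes $\mathrm{H}^1(R,-)=0$ for any $\C[R]$-module with $R$ finite, with no need to separate the cases or worry about transgression), but the mathematical content is the same.
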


\begin{proof}
    First, we have an isomorphism of $\tPModl{\ell}{S_g^n}$-modules:
    $$\mathfrak{gl}_{\mathrm{d}}(\C)\simeq \mathfrak{sl}_{\mathrm{d}}(\C)\oplus \C.$$
    As $\tPModl{\ell}{S_g^n}^{\mathrm{ab}}$ is finite, we have $\coh{\tPModl{\ell}{S_g^n}}{\C}=0$.
    Hence: $$\coh{\tPModl{\ell}{S_g^n}}{\mathfrak{gl}_{\mathrm{d}}(\C)}=\coh{\tPModl{\ell}{S_g^n}}{\mathfrak{sl}_{\mathrm{d}}(\C)}.$$

    The kernel of $\tPModl{\ell}{S_g^n}\ra\PModl{\ell}{S_g^n}$ is a finite group $R$. The inflation restriction exact sequence
    \ref{inflationrestriction} is:
    $$0\lra \mathrm{H}^1(\PModl{\ell}{S_g^n},\mathfrak{sl}_{\mathrm{d}}(\C))\lra \mathrm{H}^1(\tPModl{\ell}{S_g^n},\mathfrak{sl}_{\mathrm{d}}(\C))
    \lra \mathrm{H}^1(R,\mathfrak{sl}_{\mathrm{d}}(\C)).$$
    But, as $R$ is finite, $\mathrm{H}^1(R,\mathfrak{sl}_{\mathrm{d}}(\C))=0$. Hence the result.
\end{proof}

We now show that proving local rigidity in $\PU{p}{q}$ is sufficient.

\begin{prop}\label{reductiontounitary}
    Let $\rho:G\ra \PU{p}{q}$ be a representation. Let $\overline{\rho}:G\lra \mathrm{PGL_{p+q}}(\C)$
    be the extension given by the inclusion $\PU{p}{q}\subset \mathrm{PGL_{p+q}}(\C)$.

    Then $\rho$ is locally rigid if and only if $\overline{\rho}$ is.
\end{prop}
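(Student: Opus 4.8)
The plan is to show that the complexification of the Lie algebra $\mathfrak{pu}(p,q)$, viewed as a $G$-module via $\mathrm{ad}\,\rho$, is isomorphic to $\mathfrak{sl}_{p+q}(\C)$ with its $G$-action via $\mathrm{ad}\,\overline\rho$, and then conclude by comparing group cohomology. Concretely, $\mathfrak{u}(p,q)$ is a real form of $\mathfrak{gl}_{p+q}(\C)$, so $\mathfrak{u}(p,q)\otimes_\R\C\cong \mathfrak{gl}_{p+q}(\C)$ as complex Lie algebras, and this isomorphism intertwines the adjoint action of $\mathrm{U}(p,q)$ (hence of $G$ through $\rho$) with the adjoint action of $\mathrm{GL}_{p+q}(\C)$ (hence of $G$ through $\overline\rho$). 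Passing to the projective/traceless versions, $\mathfrak{pu}(p,q)\otimes_\R\C\cong\mathfrak{sl}_{p+q}(\C)$ as $G$-modules.

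Next I would relate the real and complex cohomology. Local rigidity of $\rho:G\to\PU{p}{q}$ means $\mathrm H^1(G,\mathfrak{pu}(p,q))=0$, where the coefficient module is a real vector space; local rigidity of $\overline\rho$ means $\mathrm H^1(G,\mathfrak{sl}_{p+q}(\C))=0$. Since group cohomology commutes with the (flat, hence exact) base change $-\otimes_\R\C$, we have $\mathrm H^1(G,\mathfrak{pu}(p,q))\otimes_\R\C\cong \mathrm H^1(G,\mathfrak{pu}(p,q)\otimes_\R\C)\cong\mathrm H^1(G,\mathfrak{sl}_{p+q}(\C))$. Therefore one cohomology group vanishes if and only if the other does, which is exactly the claimed equivalence. (If one prefers to stay over $\C$ throughout: the module $\mathfrak{pu}(p,q)\otimes_\R\C$ is, up to real structure, already the module $\mathfrak{sl}_{p+q}(\C)$, and the statement about vanishing of $\mathrm H^1$ of a real module is equivalent to the same for its complexification.)

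The only genuine point to be careful about is the bookkeeping between the group $\PU{p}{q}$ and its Lie algebra: one must check that the tangent space to $\PU{p}{q}$ at the identity really is $\mathfrak{pu}(p,q)=\mathfrak{su}(p,q)\oplus(\text{center})^\perp$, that it sits inside $\mathfrak{sl}_{p+q}(\C)$ compatibly with the embedding $\PU{p}{q}\subset\mathrm{PGL}_{p+q}(\C)$ (mirroring the discussion just before this proposition identifying $T_{I_d}\mathrm{PGL}_d(\C)$ with $\mathfrak{sl}_d(\C)$), and that the adjoint actions match under these identifications. This is the main—though routine—obstacle; once it is settled, the cohomological comparison via base change is immediate, and no computation with mapping class groups or modular functors is needed, so the proposition holds for an arbitrary group $G$.
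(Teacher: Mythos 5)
Your proposal is correct and takes essentially the same route as the paper: the paper decomposes $\mathfrak{sl}_{p+q}(\C)$ as the real $G$-module $\mathfrak{su}(p,q)\oplus i\,\mathfrak{su}(p,q)$ with $i\,\mathfrak{su}(p,q)\cong\mathfrak{su}(p,q)$, which is precisely your statement that $\mathfrak{su}(p,q)\otimes_\R\C\cong\mathfrak{sl}_{p+q}(\C)$ and that $\mathrm{H}^1$ commutes with $-\otimes_\R\C$. The only cosmetic difference is that you phrase it via complexification/flat base change while the paper writes out the direct-sum decomposition.
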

    
\begin{proof}
    Let $\mathfrak{su}(p,q)\subset \mathfrak{sl}_n(\C)$ be the inclusion of real Lie algebras.
    One notices that there is a decomposition of $G$-modules (but not of Lie algebras):
    $$\mathfrak{su}(p,q)\oplus i\mathfrak{su}(p,q)= \mathfrak{sl}_n(\C).$$
    Now, $i\mathfrak{su}(p,q)$ is isomorphic to $\mathfrak{su}(p,q)$ as a $G$-module. Hence the result.
\end{proof}

We now turn to the proof of \Cref{removingthel} mentioned in \Cref{subsectionintroresults}.
The proof is based on a generalization to deformations of the following result. 

\begin{theorem}{\cite[2.5]{aramayonaRigidityPhenomenaMapping2016}}\label{quasiunipotent}
    Let $g\geq 3$ and $n\geq 0$. Then for any finite dimensional representation $\rho$ of $\PMod{S_g^n}$,
    and any Dehn twist $T_\gamma$, its image $\rho(T_\gamma)$ is quasi-unipotent.
\end{theorem}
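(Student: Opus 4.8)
The plan is to follow the strategy of Aramayona--Souto, exploiting the abundance of relations among Dehn twists about non-separating curves that becomes available once $g\geq 3$. First I would reduce to the case where $\gamma$ is non-separating: the mapping class group acts transitively on isotopy classes of non-separating simple closed curves, so all such twists are conjugate in $\PMod{S_g^n}$, hence their images under $\rho$ are conjugate in $\mathrm{GL}_d(\C)$ and share a single characteristic polynomial. In the $\PModl{\ell}{S_g^n}$ setting, the relation $T_\gamma^\ell=1$ already forces $\rho(T_\gamma)^\ell=I$, so $\rho(T_\gamma)$ is semisimple with eigenvalues $\ell$-th roots of unity; quasi-unipotence is then immediate and the substantive content is the dimension bound. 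For a general representation of $\PMod{S_g^n}$, and for separating $\gamma$, one argues as below.

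Since $\PMod{S_g^n}$ is perfect for $g\geq 3$, the homomorphism $\det\circ\rho$ is trivial, so $\det\rho(T_\gamma)=1$. The main tool for constraining the eigenvalues is the lantern relation: on the four-holed sphere with boundary curves $\partial_1,\dots,\partial_4$ and interior curves $x,y,z$ one has $T_xT_yT_z=T_{\partial_1}T_{\partial_2}T_{\partial_3}T_{\partial_4}$, and when $g\geq 3$ the four-holed sphere embeds in $S_g^n$ so that all seven curves are non-separating. Then all seven twists are conjugate to $T_\gamma$, and the four $T_{\partial_i}$ commute pairwise and with $T_x,T_y,T_z$; so the right-hand side of the relation, after applying $\rho$, is a product of four simultaneously diagonalizable matrices all conjugate to $\rho(T_\gamma)$, whose eigenvalues are thus fourfold products of eigenvalues of $\rho(T_\gamma)$. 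Comparing this with the left-hand side, and using similar identities coming from chain and braid relations among non-separating twists, one forces the roots of the common characteristic polynomial to be roots of unity; this gives quasi-unipotence in the general setting.

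For the dimension bound, write the distinct eigenvalues of $\rho(T_\gamma)$ (equivalently, of its semisimple part) as $\zeta_1,\dots,\zeta_k$ with $k\leq d$; by the previous step these are roots of unity, and by perfectness $\prod_i\zeta_i^{m_i}=1$, where the $m_i$ are the multiplicities. One wants $r\in\{1,\dots,d\}$ with $\zeta_i^r=1$ for all $i$, so that $\rho(T_\gamma)^r$ is unipotent. The point is that the collection of pairwise-commuting conjugates of $\rho(T_\gamma)$ produced by the lantern relation (and its iterates), together with the single constraint $\det\rho(T_\gamma)=1$ and the fact that these matrices act on a $d$-dimensional space, forces the orders of the $\zeta_i$ to divide an integer bounded by $d$; the least common multiple of those orders is then the required $r$. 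Tracking this quantitative bound through the argument is precisely the ``$r\in\{0,\dots,d\}$'' refinement the paper alludes to.

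The step I expect to be the main obstacle is extracting genuine spectral information from the non-commutative relations: the lantern, chain and braid relations involve factors such as $\rho(T_x),\rho(T_y),\rho(T_z)$ that are not simultaneously diagonalizable with one another, so eigenvalues cannot simply be read off, and some care is needed to turn the relations into usable constraints on the characteristic polynomial and on the orders of its roots. The remaining ingredients --- transitivity on non-separating curves, perfectness of $\PMod{S_g^n}$ for $g\geq 3$, the existence for $g\geq 3$ of an all-non-separating embedding of $S_0^4$, and the determinant bookkeeping --- are routine.
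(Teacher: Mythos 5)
The paper does not prove this theorem; it quotes it from Aramayona and Souto, adding only that the explicit bound $r\in\{0,\dotsc,d\}$ can be extracted from their proof. There is therefore no proof of the paper's to compare your sketch against, and the relevant comparison is with the cited source.

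Regarded as a reconstruction of the Aramayona--Souto argument, your sketch has the right outline --- transitivity on non-separating curves, perfectness of $\PMod{S_g^n}$ for $g\geq 3$, and the lantern relation in an all-non-separating embedding --- and you are correct that for a representation factoring through $\PModl{\ell}{S_g^n}$ quasi-unipotence is free. Note, however, that in \Cref{opensubstack} the theorem is applied to a point of $\mathcal{X}(\PMod{S_g^n},\PUn)^\irr$, that is, to a representation of $\PMod{S_g^n}$ itself, so quasi-unipotence is not free in the situation the paper actually needs. The real problem is that the step you flag as the main obstacle is the entire content of the theorem, and your sketch leaves it unexecuted: ``one forces the roots to be roots of unity'' and ``the orders divide an integer bounded by $d$'' are the desired conclusions, not arguments. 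The device that makes the lantern usable is that the four boundary twists $T_{\partial_1},\dotsc,T_{\partial_4}$ commute with each other and with $T_x,T_y,T_z$, so one may decompose the representation into common eigenspaces of the $\rho(T_{\partial_i})$; on each such eigenspace $\rho(T_xT_yT_z)$ acts by the scalar equal to a product of four eigenvalues of $\rho(T_\gamma)$, and that scalar identity, combined with the fact that every $\rho(T_{\partial_i})$ is conjugate to $\rho(T_\gamma)$, is what converts the non-commutative relation into a usable spectral constraint. You gesture at this mechanism but do not carry it out. A smaller gap: transitivity identifies all non-separating twists with one another but does not reduce the separating case, which needs a separate argument (for instance a chain relation expressing a separating twist as a power of a product of non-separating twists).
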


We will denote by $\TU{d}$ the tangent space to the unitary group $\Ug{d}$. More precisely, it is the group
of matrices $V(\Id{d}+\epsilon A)$ with $V\in\Ug{d}$ and $A\in\mathfrak{u}_d$ (ie. $A$ is antisymmetric).

We will need the following two lemmas to generalize \Cref{quasiunipotent}.

\begin{lemma}\label{diagonalisable}
    Any matrix in $\TU{d}$ is conjugate in $\TU{d}$ to a diagonal matrix.
\end{lemma}
\begin{proof}
    Let $V(\Id{d}+\epsilon A)$ be an element of $\TU{d}$. Conjugating by elements of $\Ug{d}\subset\TU{d}$, we can assume that $V$ is diagonal,
    say $V=\mathrm{diag}(\lambda_1\Id{d_1},\dotsc,\lambda_n\Id{d_n})$ with $\lambda_i\neq \lambda_j$ for $i\neq j$. For $B\in\mathfrak{u}_d$,
    $(\Id{d}+\epsilon B)V(\Id{d}+\epsilon A)(\Id{d}+\epsilon B)^{-1}=V(\Id{d}+\epsilon(A+V^{-1}BV-B))$. Hence we may assume that $A$ is bloc
    diagonal of the form $A=\mathrm{diag}(A_1,\dotsc,A_n)$ with $A_i\in\mathfrak{u}_{d_i}$.
    Now for each $i$, $A_i=U_iD_iU_i^{-1}$ for $U_i\in\Ug{d_i}$ and $D_i$ diagonal. Setting $D=\mathrm{diag}(D_1,\dotsc,D_n)$,
    as $U=\mathrm{diag}(U_1,\dotsc,U_n)$ commutes with $V$, we see that it conjugates $V(\Id{d}+\epsilon A)$
    to the diagonal matrix $V(\Id{d}+\epsilon D)$.
\end{proof}

\begin{lemma}\label{finiteabelianization}
    For $g\geq 2$ and $n\geq 0$, $\tPMod{S_g^n}$ has finite abelianization.
\end{lemma}
\begin{proof}
    It is well known that in this range $\PMod{S_g^n}$ has finite abelianization (see for example \cite[5.1.2]{farbPrimerMappingClass2011}).
    Because abelianization is right exact, we get an exact sequence:
    \begin{equation*}
        \Z \lra \tPMod{S_g^n}^{\mathrm{ab}} \lra \PMod{S_g^n}^{\mathrm{ab}} \lra 1.
    \end{equation*}
    We only need to show that the map $\Z \ra \tPMod{S_g^n}^{\mathrm{ab}}$ is not injective. Let us assume by contradiction that it is.
    Then the map $\Q\ra \tPMod{S_g^n}^{\mathrm{ab}}\otimes\Q$ would be an isomorphism, so that the cocycle $\tau\in \mathrm{H}^2(\PMod{S_g^n};\Z)$
    corresponding to the extension $\tPMod{S_g^n}$ would be of torsion, as the corresponding extension by $\Q$ would split.
    However, the restriction of $\tau$ to $\mathrm{H}^2(\PMod{S_g^1};\Z)\simeq \Z$ by an inclusion $S_g^1\subset S_g^n$ is
    $4$ times a generator (see \cite{gilmerMaslovIndexLagrangians2013}) and is thus not a torsion element.
\end{proof}

\begin{theorem}\label{finiteorderdeformation}
    Let $g\geq 3$, $n\geq 0$ and $d\geq 0$, then for any representation $\rho:\tPMod{S_g^n}\ra \TU{d}$
    and any lift $T_\gamma$ of a Dehn twist to $\tPMod{S_g^n}$, its image $\rho(T_\gamma)$ has finite order
    equal to that of its projection to $\Ug{d}$.
\end{theorem}

\begin{proof}
    Let $\gamma$ be a simple closed curve and $T_\gamma\in\tPMod{S_g^n}$ a lift of the Dehn twist around $\gamma$. Denote by $S'$ the
    compact surface obtained by cutting $S_g^n$ along $\gamma$. It has a component $S''\subset S'$ of genus $g''\geq 2$.
    Let $\rho'':\tPMod{S''}\ra\TU{d}$ be the restriction of $\rho$. One may assume, by \Cref{diagonalisable}
    that $\rho(T_\gamma)=\rho''(T_\gamma)$ is diagonal. Let us denote for $\lambda\in \C^*$ and $\mu\in\C$ by $E_{\lambda,\mu}\subset\C[\epsilon]^d$
    the subspace generated by the coordinate vectors with diagonal coefficient $\lambda+\epsilon \mu$ in $\rho(T_\gamma)$.
    Clearily $\C[\epsilon]^d=\bigoplus_{\lambda,\mu}E_{\lambda,\mu}$. We will use the notation $E_\lambda=\bigoplus_\mu E_{\lambda,\mu}$.

    The twist $T_\gamma$ commutes with the image of $\tPMod{S''}$ in $\tPMod{S_g^n}$, so that\linebreak $\rho''(\tPMod{S''})$ commutes to $\rho(T_\gamma)$.
    Now the commutator of $\rho(T_\gamma)$ in $\TU{d}$ preserves each
    $\mathrm{ker}(\rho(T_\gamma)-(\lambda+\epsilon \mu)\Id{d})=E_{\lambda,\mu}+\epsilon E_\lambda$ and hence also
    each $\epsilon E_{\lambda,\mu}$. Let us fix $\lambda\in \C^*$ and $\mu\in \C$. The representation $\rho''$ preserves
    $\bigoplus_{\mu'\neq\mu} \epsilon E_{\lambda,\mu'}$ and thus acts on the quotient
    $E_{\lambda,\mu}+\epsilon E_\lambda/\bigoplus_{\mu'\neq\mu} \epsilon E_{\lambda,\mu'}$, that we identity with $E_{\lambda,\mu}$.
    Let $\rho''_{\lambda,\mu}:\tPMod{S''}\ra \Ug{E_{\lambda,\mu}}$ be this action.
    As by \Cref{finiteabelianization}, $\tPMod{S''}^\mathrm{ab}$ is finite, $\mathrm{det}(\rho''_{\lambda,\mu}):\tPMod{S''}\ra \C[\epsilon]^\times$
    factors through roots of unity in $\C^*$. So, if $d_{\lambda,\mu}=\dim E_{\lambda,\mu}>0$,
    $\mathrm{det}(\rho''_{\lambda,\mu})(T_\gamma)=(\lambda+\epsilon \mu)^{d_{\lambda,\mu}}$ must be a root of unity, ie. $\lambda$ is a root of unity
    and $\mu=0$. So $\rho(T_\gamma)=\rho''(T_\gamma)$ has finite order and the order is the same after quotient by $\epsilon$.
\end{proof}

\begin{prop}\label{removingthel}
    Let $\ell\geq 5$ be a prime number, $g\geq 3$ and $n\geq 0$.
    Let $\underline{\lambda}$ be any coloring of the boundary components of $S_g^n$.
    Then the $\SO$ quantum representation associated to $S_g^n$ and $\underline{\lambda}$ is locally rigid as a representation of $\PMod{S_g^n}$
    if and only if it is as a representation of $\PModl{\ell}{S_g^n}$.
\end{prop}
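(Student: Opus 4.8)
The plan is to combine the two preceding propositions, \Cref{reductiontounitary} and \Cref{opensubstack}, with the fact (\Cref{irreducibility}) that the $\SO$ quantum representations are irreducible, so that they land in the irreducible locus of the relevant character varieties. First I would observe that, after conjugating by a suitable change of basis, the $\SO$ quantum representation in prime level $\ell$ is unitarizable: under the field embedding $\zeta_\ell = \exp(\pm i\pi\frac{\ell-1}{\ell})$ it takes values in some $\PU{p}{q}$ (in fact $\mathrm{PU}(d)$, since these are genuinely unitary representations). Let $[\rho]$ denote the corresponding point of the character variety. By \Cref{irreducibility}, $\rho$ is irreducible, hence $[\rho]$ lies in $\mathcal{X}(\PMod{S_g^n},\mathrm{PU}(d))^\irr$ and in $\mathcal{X}(\PModl{\ell}{S_g^n},\mathrm{PU}(d))^\irr$.

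The heart of the argument is then purely formal. Local rigidity of $\rho$ at $[\rho]$ means that $[\rho]$ is an isolated smooth point of the ambient character variety; equivalently, the vanishing of $\mathrm{H}^1$ with coefficients in $\ad\rho$. By \Cref{reductiontounitary}, whether we measure this inside $\PU{p}{q}$ or inside $\mathrm{PGL}_d(\C)$ makes no difference, so it suffices to work with the unitary character varieties throughout. Now \Cref{opensubstack} tells us that $\mathcal{X}(\PModl{\ell}{S_g^n},\mathrm{PU}(d))^\irr$ is an \emph{open} subvariety of $\mathcal{X}(\PMod{S_g^n},\mathrm{PU}(d))^\irr$, and $[\rho]$ is a point of the former. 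Since being an isolated smooth point is a local property, $[\rho]$ is an isolated smooth point of $\mathcal{X}(\PMod{S_g^n},\mathrm{PU}(d))^\irr$ if and only if it is one of the open subvariety $\mathcal{X}(\PModl{\ell}{S_g^n},\mathrm{PU}(d))^\irr$. Translating back via \Cref{reductiontounitary}, this is exactly the claim that $\rho$ is locally rigid as a representation of $\PMod{S_g^n}$ if and only if it is locally rigid as a representation of $\PModl{\ell}{S_g^n}$.

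The one point requiring a little care — and the only plausible obstacle — is making sure the same point $[\rho]$ is being compared on both sides: one must check that the natural surjection $\PMod{S_g^n}\twoheadrightarrow\PModl{\ell}{S_g^n}$ identifies $\mathcal{X}(\PModl{\ell}{S_g^n},\mathrm{PU}(d))^\irr$ with a \emph{locally closed} (here, open, by \Cref{opensubstack}) subset of $\mathcal{X}(\PMod{S_g^n},\mathrm{PU}(d))^\irr$ in a way compatible with the deformation theory, i.e.\ that the tangent/obstruction spaces match. This is precisely the content built into the proof of \Cref{opensubstack} (the defining conditions $\rho(T_{\gamma_i})^\ell = 1$ are open, so cut out an open, hence smooth-locus-preserving, subvariety), so no new input is needed; one simply has to invoke it and note that openness preserves the properties "isolated" and "smooth point." Everything else is bookkeeping: assembling the two propositions and recording that $\ad\rho$ on $\mathfrak{su}(p,q)$, on $i\mathfrak{su}(p,q)$, and on $\mathfrak{sl}_d(\C)$ all have the same first cohomology by \Cref{reductiontounitary}.
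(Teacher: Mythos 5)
Your proposal is correct and follows exactly the route the paper takes — the paper's own proof is just the sentence "Putting \Cref{reductiontounitary} and \Cref{opensubstack} together, we get the following result," and you have filled in precisely those details (unitarizability to apply \Cref{reductiontounitary}, irreducibility via \Cref{irreducibility} to land in $\mathcal{X}(-,\PUn)^{\irr}$, openness from \Cref{opensubstack} so that "isolated smooth point" transfers across the inclusion).
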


\begin{proof}
    Because of \Cref{reductiontounitary,reductiontoprojective}, we may restrict to deformations as unitary representations
    of the central extension.
    We need to show that any deformation $\hat{\rho}:\tPMod{S_g^n}\ra \TU{d}$ of the quantum representation $\rho$
    factors through $\tPModl{\ell}{S_g^n}$.
    Let $T_\gamma\in \tPModl{\ell}{S_g^n}$ be the lift of a Dehn twist. By \Cref{finiteorderdeformation},
    $\hat{\rho}(T_\gamma)$ has finite order equal to that of $\rho(T_\gamma)$.
    Let $c$ be the generator of the central extension. Then for any $\gamma$, $c=(c T_\gamma)T_\gamma^{-1}$ is the quotient of $2$
    commuting lifts of Dehn twists. Hence again $\hat{\rho}(c)$ has finite order equal to that of $\rho(c)$.
    Thus $\hat{\rho}$ factors through $\tPModl{\ell}{S_g^n}$.
\end{proof}

Finally, we mention that embedding the representation in a larger linear group does not change the space of deformations.

\begin{prop}\label{largergl}
    Let $\rho:\tPModl{\ell}{S}\lra \mathrm{GL}_d(\C)$ be a representation such that $\rho((\id,1))$ is non-trivial.
    Let $\rho':\tPModl{\ell}{S}\lra \mathrm{GL}_{d+N}(\C)$ be the post-composition of the representation with the inclusion
    $\mathrm{GL}_d(\C)\ra \mathrm{GL}_{d+N}(\C)$. Then the induced morphism:
    $$\coh{\tPModl{\ell}{S}}{\ad \rho}\lra \coh{\tPModl{\ell}{S}}{\ad \rho'}$$
    is an isomorphism.
\end{prop}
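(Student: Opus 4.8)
The plan is to decompose the $G$-module $\mathfrak{gl}_{d+N}(\C)$ (with $G = \tPModl{l}{S}$ acting by $\ad\rho'$) into summands whose first cohomology we can control. Write $\C^{d+N} = \C^d \oplus \C^N$, where $G$ acts on $\C^d$ via $\rho$ and trivially on $\C^N$. Then $\mathfrak{gl}_{d+N}(\C) = \End(\C^{d+N})$ splits as a direct sum of $G$-modules:
$$\End(\C^d) \oplus \Hom{\C^d}{\C^N} \oplus \Hom{\C^N}{\C^d} \oplus \End(\C^N).$$
Here $\End(\C^d) = \mathfrak{gl}_d(\C)$ carries exactly the module structure $\ad\rho$, the inclusion $\mathfrak{gl}_d(\C) \hookrightarrow \mathfrak{gl}_{d+N}(\C)$ realizes the map in the statement, and $\End(\C^N)$ is a trivial $G$-module. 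The cross terms $\Hom{\C^d}{\C^N}$ and $\Hom{\C^N}{\C^d}$ are each isomorphic to a direct sum of $N$ copies of the representation $\rho$ (respectively its dual $\rho^\vee$). So the induced map on $\mathrm{H}^1$ is a split injection, and it suffices to show that the first cohomology of each of the three other summands vanishes.

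For the trivial summand $\End(\C^N)$, we have $\coh{G}{\C} = 0$ because $G^{\mathrm{ab}}$ is finite (the abelianization of the mapping class group, hence of $\tPModl{l}{S}$, is finite for the relevant surfaces, exactly as used in the first proposition of \Cref{appendixrigidity}), so this summand contributes nothing. The real work is the two cross terms: I must show $\coh{G}{\rho} = 0$ and $\coh{G}{\rho^\vee} = 0$, i.e. that the representation $\rho$ and its dual have no nontrivial $1$-cocycles. This is where the hypothesis $\rho((\id,1)) \neq 1$ enters: the central element $z = (\id,1)$ is central in $\tPModl{l}{S}$ and acts on $\C^d$ by a scalar $\kappa$ (by \Cref{remarklevel}), and since $\rho$ factors through $\tPModl{l}{S}$ we have $\kappa^{4l}=1$; the hypothesis says $\kappa \neq 1$. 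A standard averaging/centrality argument then forces $\coh{G}{\rho}$ to vanish: given a cocycle $c\colon G \to \C^d$, evaluating the cocycle identity on $g$ and the central $z$ gives $c(gz) = c(g) + \rho(g)c(z) = c(zg) = c(z) + \kappa c(g)$, so $(\kappa - 1)c(g) = \rho(g)c(z) - c(z)$, which exhibits $c$ as the coboundary of $(\kappa-1)^{-1}c(z)$. The same argument applies to $\rho^\vee$ since $z$ acts there by $\kappa^{-1} \neq 1$.

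Assembling the pieces: $\coh{G}{\mathfrak{gl}_{d+N}(\C)}$ is the direct sum of $\coh{G}{\mathfrak{gl}_d(\C)}$ with $N$ copies of $\coh{G}{\rho}$, $N$ copies of $\coh{G}{\rho^\vee}$, and $\coh{G}{\End(\C^N)} = N^2 \cdot \coh{G}{\C}$, and all but the first of these vanish, so the inclusion-induced map $\coh{G}{\ad\rho} \to \coh{G}{\ad\rho'}$ is an isomorphism. The main obstacle is the cross-term vanishing; everything else is bookkeeping. One subtlety to check carefully is that the hypothesis is stated as $\rho((\id,1))$ \emph{non-trivial} rather than literally $\kappa \neq 1$, but since $(\id,1)$ is central and the representation need not be irreducible a priori, "non-trivial" should be upgraded to "acts by a scalar $\neq 1$" — this is automatic here because in all cases of interest $\rho$ comes from a modular functor, where $(\id,1)$ acts by the scalar $\kappa$ of \Cref{remarklevel}; alternatively one invokes irreducibility (\Cref{irreducibility}) plus Schur to see a central element always acts by a scalar. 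I would add a sentence making this explicit.
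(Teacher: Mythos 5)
Your proof is correct and essentially identical to the paper's: same decomposition of $\mathfrak{gl}_{d+N}$ into $\mathfrak{gl}_d\oplus\rho^{\oplus N}\oplus(\rho^*)^{\oplus N}\oplus 1^{\oplus N^2}$, same use of finiteness of the abelianization to kill the trivial summand, and same use of the central element $(\id,1)$ to kill the cross terms. The only cosmetic difference is that you unwind the cocycle computation by hand where the paper invokes \Cref{centerkills} (the ``Center Kills'' lemma), which encapsulates exactly your central-element argument, and your closing remark that ``non-trivial'' should be read as ``acts by a non-trivial scalar'' identifies the same implicit hypothesis the paper relies on when it says $(\id,1)$ acts by a non-trivial character.
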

The hypothesis on $\rho((\id,1))$ is always verified for representations coming from the $\SO$ TQFT.

\begin{proof}
    As a representation of $\tPModl{\ell}{S}$, $\ad \rho'$ has a decomposition:
    $$\mathfrak{gl}_{d+N}=\mathfrak{gl}_{d}\oplus \rho^{\oplus N}\oplus (\rho^*)^{\oplus N}\oplus \mathrm{1}^{\oplus N^2}$$
    where $1$ denotes the trivial representation.
    Now, as $\tPModl{\ell}{S}^\mathrm{ab}$ is finite: 
    $$\coh{\tPModl{\ell}{S}}{\mathrm{1}}=0.$$
    As $(\id,1)$ is central and $\rho((\id,1))$ is non-trivial, by \Cref{centerkills}:
    $$\coh{\tPModl{\ell}{S}}{\rho}=0\text{ and }\coh{\tPModl{\ell}{S}}{\rho^*}=0.$$
\end{proof}

\section{Proof of the main Lemma}\label{sectionmainlemma}

From this point onwards, we will use the following notation for the adjoint representations.
For $(S,\underline{\lambda})$ a colored surface, we will denote $\ad\Nu(S,\underline{\lambda})$ or even just $\ad\Nu(S)$
the adjoint of the representation of $\tPMod{S}$ on $\Nu(S,\underline{\lambda})$, ie. the space $\End{\Nu(S,\underline{\lambda})}$
with action of $\tPMod{S}$ by conjugation.
This notation will be useful when dealing with 
mapping class groups of subsurfaces.

Let $(S,\underline{\lambda})$ be a colored surface, $a_1,\dotsc,a_n$ some disjoint oriented {\sccs} on $S$
and $\und{\mu}\in\Lambda^n$ some colors.
We will denote by $(S_{a_1,\dotsc,a_n},\und{\mu},\und{\lambda})$,
or even just by $(S_{a_1,\dotsc,a_n},\mu_1,\dotsc,\mu_n)$,
the colored surface obtained by cutting $S$ along $a_1,\dotsc,a_n$ and
coloring the new boundary components on either sides of $a_i$ with $\mu_i$. See \Cref{cut_example} for an example.

\begin{figure}
    \def\svgwidth{0.8\linewidth}
    \import{figures/}{cut_example.pdf_tex}
    \caption{Surfaces $S$ and $(S_{a_1,a_2},\mu_1,\mu_2)$}
    \label[figure]{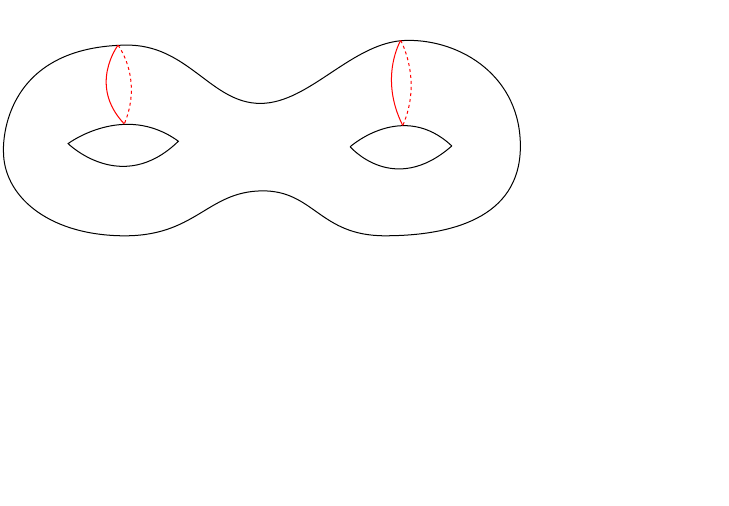}
\end{figure}

One of the central technical results of this paper is the following lemma.

\begin{lemma}\label{mainlemma}
    Let $\Nu$ be a modular functor. Let $\ell>1$ be a level for $\Nu$.
    
    Let $(S,\und{\lambda})$ be a connected colored surface, and $a_1$, $a_2$ disjoint {\sccs} on $S$ such that $\PMod{S}$
    is generated by the stabilizer of $a_1$ and the stabilizer of $a_2$.
    Let $c$ be a set of disjoint simple closed curves separating $S$ into $2$ components, containing $a_1$ and $a_2$ respectively.
    
    We assume that $\Nu$ is irreducible on the $2$ components of $S_c$ and those of $S_{a_1,a_2}$.

    Assume there exist colors $\underline{\lambda_c}$ such that for all $\mu_1,\mu_2\in \Lambda$:
    $$\Nu(S_{a_1,a_2},\mu_1,\mu_2)\neq 0\Lra \Nu(S_{a_1,a_2,c},\mu_1,\mu_2,\underline{\lambda_c})\neq 0.$$
    Then the natural map:
    $$\coh{\PModl{\ell}{S}}{\ad\Nu(S)}\ra \coh{\PModl{\ell}{S_{a_1}}}{\ad\Nu(S)}\oplus \coh{\PModl{\ell}{S_{a_2}}}{\ad\Nu(S)}$$
    is injective.
\end{lemma}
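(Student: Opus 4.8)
The plan is to use a Mayer--Vietoris / Bass--Serre-type argument built from the hypothesis that $\PMod{S}$ is generated by the stabilizers $\mathrm{Stab}(a_1)$ and $\mathrm{Stab}(a_2)$. Concretely, suppose $\xi\in\coh{\PModl{\ell}{S}}{\ad\Nu(S)}$ is a class restricting to $0$ in both $\coh{\PModl{\ell}{S_{a_i}}}{\ad\Nu(S)}$; I want to show $\xi=0$. The first step is to replace the curve stabilizers in $\PMod{S}$ by the groups $\PMod{S_{a_i}}$: the stabilizer of $a_i$ in $\PMod{S}$ surjects onto (a finite-index-related subgroup of) $\PMod{S_{a_i}}$ up to the twist $T_{a_i}$, and after passing to the quotient $\PModl{\ell}{S}$ the relevant subgroup generated by mapping classes supported away from $a_i$ is precisely the image of $\PModl{\ell}{S_{a_i}}$. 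So the hypothesis that $\PMod{S}$ is generated by $\mathrm{Stab}(a_1)$ and $\mathrm{Stab}(a_2)$ lets me write $\PModl{\ell}{S}$ as generated by the images of $\PModl{\ell}{S_{a_1}}$ and $\PModl{\ell}{S_{a_2}}$, and a class in $H^1$ that dies on both generating subgroups must already be determined by its behaviour on the intersection.

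The second, and key, step is to control the intersection subgroup $\PModl{\ell}{S_{a_1}}\cap\PModl{\ell}{S_{a_2}}$, which contains $\PMod{S_{a_1,a_2}}$, and to show that the restriction of $\xi$ to it also vanishes. Here is where the remaining hypotheses enter. A cocycle representing $\xi$ becomes a coboundary on each $\PModl{\ell}{S_{a_i}}$, say $\xi|_{\PModl{\ell}{S_{a_i}}}=dv_i$ for some $v_i\in\End{\Nu(S)}$; the difference $v_1-v_2$ is then a $\PMod{S_{a_1,a_2}}$-invariant endomorphism of $\Nu(S)$, and if I can show it is forced to be a scalar (hence acts trivially after adjoint), the two potentials agree and glue to a global potential, giving $\xi=0$. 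So the heart of the matter is: \emph{the space of $\PMod{S_{a_1,a_2}}$-invariants in $\End{\Nu(S)}$ is one-dimensional}. This is exactly what the decomposition hypothesis is for. Cutting along $c$ decomposes $S_{a_1,a_2}$ into the two pieces of $S_{a_1,a_2,c}$, and $\Nu(S)$ decomposes as $\bigoplus_{\mu_1,\mu_2}\Nu(S_{a_1,a_2},\mu_1,\mu_2)$ over the gluing axiom \textbf{(G)}; by irreducibility of $\Nu$ on the two components of $S_{a_1,a_2}$, each summand $\Nu(S_{a_1,a_2},\mu_1,\mu_2)$ is an irreducible $\PMod{S_{a_1,a_2}}$-module, so by Schur's lemma the invariants of $\End{\Nu(S)}$ are spanned by the projectors onto these isotypic blocks. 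To collapse these to a single scalar I use the Dehn twists $T_{a_1},T_{a_2}$ together with property \textbf{(III)}: since $r_{\mu}=r_{\nu}\Leftrightarrow\mu=\nu$, the twist $T_{a_i}$ separates the blocks by distinct eigenvalues, and there is a further mapping class in $\PMod{S}$ (available because we have the full generation hypothesis, and this is where the curve $c$ and the colors $\underline{\lambda_c}$ produce the needed nonvanishing of the relevant spaces $\Nu(S_{a_1,a_2,c},\mu_1,\mu_2,\underline{\lambda_c})$) that moves one block into another, forcing all the block-projectors to be conjugate and hence the invariant endomorphism to be scalar. The irreducibility assumption on the components of $S_c$ is what makes this last conjugating element act irreducibly enough to do the mixing.

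The main obstacle I anticipate is precisely this last point: showing that the block-projectors $p_{\mu_1,\mu_2}$ are all identified under the action of $\PMod{S}$, i.e. that no nontrivial $\PMod{S_{a_1,a_2}}$-invariant subspace of $\End{\Nu(S)}$ survives the larger group. This requires carefully producing mapping classes that mix the blocks — one should think of them as "handle slides" realized inside the $S_c$-decomposition — and checking that the nonvanishing hypothesis on $\Nu(S_{a_1,a_2,c},\mu_1,\mu_2,\underline{\lambda_c})$ guarantees the relevant matrix coefficients are nonzero so the mixing is genuine. Everything else (the Bass--Serre bookkeeping relating $\mathrm{Stab}(a_i)$ to $\PModl{\ell}{S_{a_i}}$, the cocycle-gluing, the finiteness of $H^1$ of finite groups used implicitly) is routine by comparison. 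Once the invariants are shown to be scalar, the injectivity statement follows formally from the generation hypothesis and a diagram chase with the two restriction maps.
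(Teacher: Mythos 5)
There is a genuine gap, and it sits exactly where you say "the heart of the matter" is. Your target statement --- that the $\PMod{S_{a_1,a_2}}$-invariants in $\End{\Nu(S)}$ are one-dimensional, so that $v_1-v_2$ is scalar --- is both false and unnecessary. It is false: by the gluing axiom and the irreducibility hypothesis on the components of $S_{a_1,a_2}$, the invariants are precisely the span of the projectors $p_{\mu_1\mu_2}$ onto the summands $\Nu(S_{a_1,a_2},\mu_1,\mu_2)$, and there are as many of these as there are admissible pairs $(\mu_1,\mu_2)$ (in the $S_0^4$ application this space is $4$-dimensional, for example). It is also not what you need: for $\xi = dv_1$ and $\xi = dv_2$ to glue, it suffices that $v_1-v_2$ lie in the \emph{image} of $M^{\Gamma_1}\oplus M^{\Gamma_2}\to M^{\Gamma_{12}}$ (where $\Gamma_i$ denotes the stabilizer of $a_i$ and $\Gamma_{12}$ the intersection), because you may then replace $v_i$ by $v_i - m_i$ with $m_i\in M^{\Gamma_i}$ and make the two potentials agree. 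In the coordinates $m = \sum c_{\mu_1\mu_2}\,p_{\mu_1\mu_2}$, this image is exactly the set of $m$ with $c_{\mu_1\mu_2}=a_{\mu_1}+b_{\mu_2}$, which is strictly larger than the scalars; trying to show $m$ is scalar is asking for too much and will not go through.

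The repair mechanism you propose also doesn't work as stated. You want a mapping class in $\PMod{S}$ that conjugates one block projector into another and conclude that the coefficients of $m=v_1-v_2$ must coincide, but $m$ is only $\Gamma_{12}$-invariant; it is not preserved by conjugation by elements outside $\Gamma_{12}$, so "the projectors are conjugate" says nothing about the coefficients $c_{\mu_1\mu_2}$. The actual constraint that the curve $c$ buys you is of a different nature. Cutting along $c$ gives two commuting subgroups $\Gamma_1^c$ and $\Gamma_2^c$ of $\PModl{\ell}{S}$ (the mapping class groups of the two sides of $c$), with $\Gamma_i^c\subset\Gamma_i$. Because $\xi$ is an honest cocycle on $\PModl{\ell}{S}$, the relation $g_1g_2=g_2g_1$ for $g_1\in\Gamma_1^c$, $g_2\in\Gamma_2^c$ forces $[g_1,[g_2,m]]=0$. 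Projecting onto the summand of $\End{\Nu(S)}$ indexed by $\underline{\lambda_c}$ and using irreducibility of the two sides of $c$, one sees that $\C[\Gamma_1^c]$ and $\C[\Gamma_2^c]$ act as $\id\otimes\End{}$ and $\End{}\otimes\id$ respectively; a short computation (the paper's \Cref{linearalgebra}) then shows $c_{\mu_1\mu_2}=a_{\mu_1}+b_{\mu_2}$ on the support guaranteed by the nonvanishing hypothesis, which is exactly membership in $\mathrm{Im}(M^{\Gamma_1}\oplus M^{\Gamma_2})$. So the role of $c$ is to produce the commutator constraint, not to produce a conjugating element; and the nonvanishing of $\Nu(S_{a_1,a_2,c},\mu_1,\mu_2,\underline{\lambda_c})$ is what ensures this constraint sees every block. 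Incidentally, you invoke property \textbf{(III)}, but it is not a hypothesis of this lemma (it only enters in the corollary \Cref{bettermainlemma}) --- another sign that the argument has drifted from what the hypotheses can actually deliver.
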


\begin{figure}
    \def\svgwidth{0.8\linewidth}
    %% Creator: Inkscape 1.2.2 (b0a8486, 2022-12-01), www.inkscape.org
%% PDF/EPS/PS + LaTeX output extension by Johan Engelen, 2010
%% Accompanies image file 'lemma_example.pdf' (pdf, eps, ps)
%%
%% To include the image in your LaTeX document, write
%%   \input{<filename>.pdf_tex}
%%  instead of
%%   \includegraphics{<filename>.pdf}
%% To scale the image, write
%%   \def\svgwidth{<desired width>}
%%   \input{<filename>.pdf_tex}
%%  instead of
%%   \includegraphics[width=<desired width>]{<filename>.pdf}
%%
%% Images with a different path to the parent latex file can
%% be accessed with the `import' package (which may need to be
%% installed) using
%%   \usepackage{import}
%% in the preamble, and then including the image with
%%   \import{<path to file>}{<filename>.pdf_tex}
%% Alternatively, one can specify
%%   \graphicspath{{<path to file>/}}
%% 
%% For more information, please see info/svg-inkscape on CTAN:
%%   http://tug.ctan.org/tex-archive/info/svg-inkscape
%%
\begingroup%
  \makeatletter%
  \providecommand\color[2][]{%
    \errmessage{(Inkscape) Color is used for the text in Inkscape, but the package 'color.sty' is not loaded}%
    \renewcommand\color[2][]{}%
  }%
  \providecommand\transparent[1]{%
    \errmessage{(Inkscape) Transparency is used (non-zero) for the text in Inkscape, but the package 'transparent.sty' is not loaded}%
    \renewcommand\transparent[1]{}%
  }%
  \providecommand\rotatebox[2]{#2}%
  \newcommand*\fsize{\dimexpr\f@size pt\relax}%
  \newcommand*\lineheight[1]{\fontsize{\fsize}{#1\fsize}\selectfont}%
  \ifx\svgwidth\undefined%
    \setlength{\unitlength}{253.71149258bp}%
    \ifx\svgscale\undefined%
      \relax%
    \else%
      \setlength{\unitlength}{\unitlength * \real{\svgscale}}%
    \fi%
  \else%
    \setlength{\unitlength}{\svgwidth}%
  \fi%
  \global\let\svgwidth\undefined%
  \global\let\svgscale\undefined%
  \makeatother%
  \begin{picture}(1,0.41976777)%
    \lineheight{1}%
    \setlength\tabcolsep{0pt}%
    \put(0,0){\includegraphics[width=\unitlength,page=1]{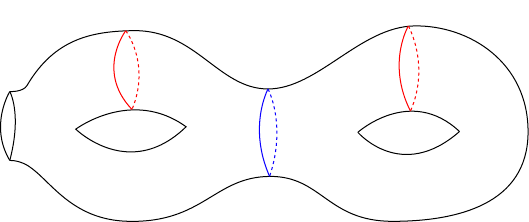}}%
    \put(0.50221686,0.04247293){\color[rgb]{0.05098039,0,1}\makebox(0,0)[lt]{\lineheight{1.25}\smash{\begin{tabular}[t]{l}$c$\end{tabular}}}}%
    \put(0.22371322,0.38943688){\color[rgb]{1,0,0}\makebox(0,0)[lt]{\lineheight{1.25}\smash{\begin{tabular}[t]{l}$a_1$\end{tabular}}}}%
    \put(0.75672128,0.38943688){\color[rgb]{1,0,0}\makebox(0,0)[lt]{\lineheight{1.25}\smash{\begin{tabular}[t]{l}$a_2$\end{tabular}}}}%
  \end{picture}%
\endgroup%

    \caption{Surface $S$ and curves $c$, $a_1$, $a_2$ as in \Cref{mainlemma}.}
    \label[figure]{lemma_example}
\end{figure}

\begin{remark}
    When applying \Cref{mainlemma}, we will usually not give details on why $\PMod{S}$
    is generated by the stabilizer of $a_1$ and the stabilizer of $a_2$, as it will easily follow from well known generating sets for $\PMod{S}$.
    For details on these generators, we refer the reader to \cite[4.4 and 9.3]{farbPrimerMappingClass2011}.
\end{remark}

The proof of the lemma relies on the following group cohomology results.

\begin{prop}[MV-sequence, {\cite[chp. II (7.7), (7.8)]{brownCohomologyGroups1982}}] \label{mayervietoris}
    Let $G_1$, $G_2$, $A$ be groups with inclusions $i_1:A\lra G_1$, $i_2:A\lra G_2$.
    Let $G=G_1*_A G_2$ be their amalgamated sum. Let $M$ be a $G$-module.
    Then one has an exact sequence of $G$-modules:
    $$0\lra \Z[G/A]\lra \Z[G/G_1]\oplus\Z[G/G_2] \
    \xlra{\begin{pmatrix} 1 & -1 \end{pmatrix}} \Z\lra 0$$
    which induces a long exact sequence in cohomology:
    $$\ra \mathrm{H}^n(G,M) \ra \mathrm{H}^n(G_1,M)\oplus \mathrm{H}^n(G_2,M) \ra \mathrm{H}^n(A,M)\ra \mathrm{H}^{n+1}(G,M)\ra.$$
\end{prop}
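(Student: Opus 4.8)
The plan is to deduce the whole statement from a short exact sequence of permutation $\Z G$-modules attached to the amalgam — the one displayed in the statement — which itself comes from the Bass--Serre tree of $G$, and then to run the derived functor $\mathrm{Ext}^\bullet_{\Z G}(-,M)$ together with Shapiro's lemma.

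\emph{Step 1: the short exact sequence of $G$-modules.} Recall the Bass--Serre tree $T$ of $G=G_1*_A G_2$: its vertex set is $G/G_1\sqcup G/G_2$, its edge set is $G/A$, and the edge $gA$ joins the vertices $gG_1$ and $gG_2$; the group $G$ acts on $T$ by left translation, hence by simplicial automorphisms. The basic input of Bass--Serre theory (which I would cite, e.g. from Serre's book or Brown) is that $T$ is a tree, i.e. connected and without cycles, so contractible. Orienting the edges, the augmented simplicial chain complex of $T$ reads
$$0\lra \Z[G/A]\xlra{\partial}\Z[G/G_1]\oplus\Z[G/G_2]\xlra{\epsilon}\Z\lra 0,$$
with $\partial(gA)=gG_1-gG_2$ and $\epsilon$ the augmentation; connectedness of $T$ forces $\epsilon$ onto with $\mathrm{im}\,\partial=\ker\epsilon$, and $H_1(T)=0$ forces $\partial$ injective, so the sequence is exact. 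All three maps are $\Z G$-linear because $G$ acts simplicially. Up to the harmless automorphism of $\Z[G/G_1]\oplus\Z[G/G_2]$ negating the second summand, this is exactly the displayed sequence, with $\epsilon$ turning into the map $(1,-1)$ and $\partial$ into the diagonal. If one prefers not to quote Bass--Serre theory, exactness can be checked by hand from the normal form theorem for elements of $G_1*_A G_2$.

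\emph{Step 2: pass to cohomology.} Apply the contravariant $\delta$-functor $\mathrm{Ext}^\bullet_{\Z G}(-,M)$ to this short exact sequence. Since $\mathrm{Ext}$ sends finite direct sums in the first variable to direct sums, one gets a long exact sequence
$$\cdots\lra\mathrm{Ext}^n_{\Z G}(\Z,M)\lra\mathrm{Ext}^n_{\Z G}\big(\Z[G/G_1]\oplus\Z[G/G_2],M\big)\lra\mathrm{Ext}^n_{\Z G}(\Z[G/A],M)\lra\mathrm{Ext}^{n+1}_{\Z G}(\Z,M)\lra\cdots.$$
Now $\mathrm{Ext}^\bullet_{\Z G}(\Z,M)=H^\bullet(G,M)$ by definition, and for any subgroup $H\leq G$ one has $\Z[G/H]\cong\Z G\otimes_{\Z H}\Z$ with $\Z G$ free (hence flat) as a right $\Z H$-module, so the Eckmann--Shapiro lemma gives $\mathrm{Ext}^\bullet_{\Z G}(\Z[G/H],M)\cong\mathrm{Ext}^\bullet_{\Z H}(\Z,M)=H^\bullet(H,M)$. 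Substituting $H=G_1,G_2,A$ turns the above into the asserted Mayer--Vietoris sequence. It remains to note the maps are the expected ones: naturality of the Shapiro isomorphism with respect to $A\hookrightarrow G_i\hookrightarrow G$ identifies the middle arrow with $(\mathrm{res}^{G_1}_A,-\,\mathrm{res}^{G_2}_A)$, and the connecting homomorphisms are the boundary operators of the amalgam.

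\emph{Main obstacle.} The only non-formal point is Step 1: either one imports that the Bass--Serre tree is genuinely a tree, or one proves exactness of $0\to\Z[G/A]\to\Z[G/G_1]\oplus\Z[G/G_2]\to\Z\to0$ directly, which is precisely where the structure theory of amalgamated products (normal forms, and injectivity of $A\hookrightarrow G_i$) is used. Step 2 is routine homological algebra. Conceptually, the whole statement is the Mayer--Vietoris sequence with local coefficients for the homotopy pushout decomposition $K(G,1)\simeq K(G_1,1)\cup_{K(A,1)}K(G_2,1)$, and the short exact sequence of Step 1 is its chain-level incarnation.
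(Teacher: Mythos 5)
Your proof is correct and is exactly the argument the cited reference \cite[chp.\ II (7.7), (7.8)]{brownCohomologyGroups1982} gives (and which the paper simply quotes without reproving): read off the short exact sequence of permutation modules as the augmented chain complex of the Bass--Serre tree, then apply $\mathrm{Ext}^\bullet_{\Z G}(-,M)$ together with the Eckmann--Shapiro isomorphism. The sign bookkeeping (negating the $\Z[G/G_2]$ summand to turn the augmentation into $(1,-1)$ and $\partial$ into the diagonal) is handled correctly, and your remark that the only non-formal input is that the Bass--Serre graph is a tree is exactly right.
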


\begin{prop}[IR-sequence, {\cite[VII, Proposition 4]{serreLocalFields2013}}] \label{inflationrestriction}
    Let $G$ be a group, $R$ a normal subgroup and $M$ a $G$-module. Then we have the following exact sequence:
    $$0\lra \mathrm{H}^1(G/R,M^R)\lra \mathrm{H}^1(G,M)\lra \mathrm{H}^1(R,M).$$
    Where $M^R$ denotes the set of elements of $M$ fixed by $R$.
\end{prop}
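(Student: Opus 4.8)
The statement is the classical truncated inflation--restriction sequence, and the plan is to prove it directly on $1$-cocycles rather than by invoking the low-degree terms of the Lyndon--Hochschild--Serre spectral sequence $\mathrm{H}^p(G/R,\mathrm{H}^q(R,M))\Rightarrow\mathrm{H}^{p+q}(G,M)$; since only the first three terms are at stake, the cocycle computation is short and entirely elementary.

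First I would set up the two maps. The $G$-action on the submodule $M^R$ of $R$-invariants factors through $G/R$: indeed $M^R$ is $G$-stable because $R$ is normal ($g^{-1}rg\in R$ for $r\in R$, so $g$ sends $R$-invariants to $R$-invariants), and $R$ acts trivially on it by definition. The quotient $\pi\colon G\to G/R$ together with the inclusion $M^R\hookrightarrow M$ then induces the inflation map $\mathrm{inf}\colon\mathrm{H}^1(G/R,M^R)\to\mathrm{H}^1(G,M)$, sending a cocycle $f$ to $g\mapsto f(\pi(g))$; ordinary restriction of cocycles to $R$ gives $\mathrm{res}\colon\mathrm{H}^1(G,M)\to\mathrm{H}^1(R,M)$. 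Since a cocycle vanishes at the identity, an inflated cocycle $g\mapsto f(\pi(g))$ vanishes identically on $R$, so $\mathrm{res}\circ\mathrm{inf}=0$.

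For injectivity of $\mathrm{inf}$: if $f\colon G/R\to M^R$ is a cocycle whose inflation is a coboundary, say $f(\pi(g))=gm-m$ for some $m\in M$, then evaluating at $g\in R$ gives $0=gm-m$, hence $m\in M^R$ and $f$ is a coboundary already in $\mathrm{H}^1(G/R,M^R)$. For exactness in the middle, start from a cocycle $\phi\colon G\to M$ with $\phi|_R$ a coboundary, $\phi(r)=rm-m$ for all $r\in R$; subtracting the principal cocycle $g\mapsto gm-m$ one may assume $\phi|_R=0$. The cocycle identity $\phi(gr)=\phi(g)+g\phi(r)$ then shows $\phi$ is constant on each coset $gR$, so it descends to $\overline\phi\colon G/R\to M$; and since $R$ is normal, writing $rg=g\,(g^{-1}rg)$ gives $\phi(rg)=\phi(g)$, which compared with $\phi(rg)=r\phi(g)$ forces $\phi(g)\in M^R$ for every $g$. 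One then checks that the cocycle identity for $\phi$ transcribes verbatim into the cocycle identity for $\overline\phi\colon G/R\to M^R$ (the $G/R$-action on $M^R$ being the one above), so $\phi=\mathrm{inf}(\overline\phi)$, as required.

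The only point requiring care is the use of normality of $R$ --- both to make $M^R$ a $G/R$-module and to conclude that the adjusted cocycle takes values in $M^R$; the rest is routine bookkeeping of cocycle identities. Since \cite{serreLocalFields2013} proves exactly this statement, in the paper one may simply cite it, and the sketch above is recorded only for completeness.
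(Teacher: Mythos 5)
Your cocycle-level argument is correct and is exactly the standard proof found in the reference the paper cites; the paper itself does not supply a proof but simply quotes \cite[VII, Proposition 4]{serreLocalFields2013}, and your sketch accurately reconstructs that argument (including the two places where normality of $R$ is genuinely used).
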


The idea behind the proof of \Cref{mainlemma} is the following. If $\rho$ is a representation of
$\PModl{\ell}{S}$ that is rigid when restricted to $\PModl{\ell}{S_{a_1}}$ and $\PModl{\ell}{S_{a_2}}$,
one could hope that, as these groups generate $\PModl{\ell}{S}$, $\rho$ is rigid.
However this is not the case in general. The defect of rigidity of such a $\rho$
can be controlled by the Mayer-Vietoris and Inflation-Restriction sequences (see step \textbf{(1)} below).
In the situation of the Lemma, we use the technical assumption on $c$ and the irreducibility of $\Nu$
to control this defect.

\begin{proof}[Proof of \Cref{mainlemma}.]
    \textbf{(1) Reformulation}.
    For $i\in\{1,2\}$, let $\Gamma_i=\PModl{\ell}{S,[a_i]}$ denote the stabilizer of $a_i$, and let $\Gamma_{12}=\PModl{\ell}{S,[a_1],[a_2]}$ their intersection.
    Let $G=\Gamma_1*_{\Gamma_{12}}\Gamma_2$ be their amalgamated sum.
    
    As by assumption the stabilizers generate $\Gamma=\PModl{\ell}{S}$, we have $\Gamma = G/R$ for a normal subgroup $R$.
    Let $S_c= S_1 \sqcup S_2$ such that $a_i\subset S_i$.
    For $\Gamma_1^c = \PModl{\ell}{S_2}$ and $\Gamma_2^c = \PModl{\ell}{S_1}$, one has $\Gamma_i^c\subset \Gamma_i$, which justifies the notation.
    As $\Gamma_1^c$ and $\Gamma_2^c$ commute in $\Gamma$, we have an inclusion $[\Gamma_1^c,\Gamma_2^c]\subset R$.
    
    From Mayer-Vietoris sequence (\Cref{mayervietoris}) and the Inflation Restriction sequence (\Cref{inflationrestriction}),
    we get the following diagram with exact vertical and horizontal sequences.
    % https://q.uiver.app/?q=WzAsNyxbMCwyLCJNXntcXEdhbW1hXzF9XFxvcGx1cyBNXntcXEdhbW1hXzJ9Il0sWzEsMiwiTV57XFxHYW1tYV97MTJ9fSJdLFsyLDIsIkheMShHLE0pIl0sWzMsMiwiSF4xKFxcR2FtbWFfMSxNKVxcb3BsdXMgSF4xKFxcR2FtbWFfMixNKSJdLFsyLDEsIkheMShcXEdhbW1hLE0pIl0sWzIsMCwiMCJdLFsyLDMsIlxcbWF0aHJte0hvbX0oUixNKSJdLFs1LDRdLFs0LDJdLFsyLDYsImYiLDJdLFsyLDNdLFsxLDIsIlxcZGVsdGEiXSxbMCwxXV0=
    \[\begin{tikzcd}[column sep=small]
        && 0 \\
        && {\mathrm{H}^1(\Gamma,M)} \\
        {M^{\Gamma_1}\oplus M^{\Gamma_2}} & {M^{\Gamma_{12}}} & {\mathrm{H}^1(G,M)} & {\mathrm{H}^1(\Gamma_1,M)\oplus \mathrm{H}^1(\Gamma_2,M)} \\
        && {\mathrm{Hom}(R,M)}
        \arrow[from=1-3, to=2-3]
        \arrow[from=2-3, to=3-3]
        \arrow["f"', from=3-3, to=4-3]
        \arrow[from=3-3, to=3-4]
        \arrow["\delta", from=3-2, to=3-3]
        \arrow[from=3-1, to=3-2]
    \end{tikzcd}\]
    Here, $M = \ad\Nu(S)$. From the diagram we can see that if $f$ is injective on the image of $\delta$,
    then $\mathrm{H}^1(\Gamma,M)$ injects into $\mathrm{H}^1(\Gamma_1,M)\oplus \mathrm{H}^1(\Gamma_2,M)$.
    
    Note that $\Gamma_i = \PModl{\ell}{S_{a_i}}/\langle T_{a_i} \rangle$, where $T_{a_i}$ is the Dehn twist along $a_i$.
    As $\langle T_{a_i} \rangle$ is finite, one has $\mathrm{Hom}(\langle T_{a_i} \rangle,M)=0$.
    Thus from the Inflation-Restriction sequence (\Cref{inflationrestriction}):
    $$0\lra \mathrm{H}^1(\Gamma_i,M)\lra \mathrm{H}^1(\PModl{\ell}{S_{a_i}},M)\lra \mathrm{Hom}(\langle T_{a_i} \rangle,M)$$
    we can see that $\mathrm{H}^1(\Gamma_i,M)=\mathrm{H}^1(\PModl{\ell}{S_{a_i}},M)$.

    Hence, if we can show that $f$ is injective on the image of $\delta$, we are done.

    \textbf{(2) Computation of $f\circ\delta$}. To compute $\delta$, one has to choose acyclic resolutions of $\Z$, $\Z[G/G_1]$,\dots
    We will choose the resolutions giving the canonical description of chains in group cohomology (see \cite[VII.3]{serreLocalFields2013}).
    With these resolutions, the chains for the $G$-module $\Z[G/H]$ are given by:
    $$C^n(H,M)=\mathrm{Hom}_{\mathrm{Set}}(H^n,M).$$
    The first $2$ differentials are:
    \[\begin{array}{rlll}
        \partial_0:&M&\lra& \mathrm{Hom}_{\mathrm{Set}}(H,M) \\
                   &m&\longmapsto& (g\mapsto g\cdot m - m) \\
        \partial_1:&\mathrm{Hom}_{\mathrm{Set}}(H,M)&\lra& \mathrm{Hom}_{\mathrm{Set}}(H^2,M) \\
                   &\varphi&\longmapsto& ((g_1,g_2)\mapsto g_1\cdot \varphi(g_2) - \varphi(g_1g_2)+\varphi(g_1)).
    \end{array}\]

    Let $m\in M^{\Gamma_{12}}\subset M=C^0(\Gamma_{12},M)$. We can lift $m$ to $m+0\in M\oplus M=C^0(\Gamma_1,M)\oplus C^0(\Gamma_2,M)$.
    Now:
    $$\partial_0(m+0) =\varphi+0 \in \mathrm{Hom}(\Gamma_1,M)\oplus \mathrm{Hom}(\Gamma_1,M)=C^1(\Gamma_1,M)\oplus C^1(\Gamma_2,M)$$
    where $\varphi(g_1) = g_1\cdot m - m$. Hence $\delta(m)\in \mathrm{Hom}(G,M)$ is the unique cocycle $\psi:G\lra M$ such that
    $\psi(g_1)=\varphi(g_1)$ for all $g_1\in\Gamma_1$ and $\psi(g_2) = 0$ for all $g_2\in \Gamma_2$.

    Let $g_1\in \Gamma_1^c$ and $g_2\in \Gamma_2^c$. We want to compute $f(\delta(m))(g_1g_2g_1^{-1}g_2^{-1})=\psi(g_1g_2g_1^{-1}g_2^{-1})$.
    From the cocycle condition:
    \[\begin{array}{ll}
    \psi(g_1g_2g_1^{-1}g_2^{-1}) &= \psi(g_1) + g_1\cdot\psi(g_2) + g_1g_2\cdot \psi(g_1^{-1})+g_1g_2g_1^{-1}\cdot \psi(g_2^{-1}) \\
                    &= \psi(g_1) + g_1\cdot\psi(g_2) - g_2\cdot \psi(g_1) - \psi(g_2) \\
                    &= g_1\cdot m  - m - g_2g_1\cdot m + g_2\cdot m.
    \end{array}\]
    We used $\psi(h^{-1})=-h^{-1}\cdot\psi(h)$, the fact that $g_1$ and $g_2$ commute in $\Gamma$, $\psi(g_1) = g_1\cdot m - m$
    and $\psi(g_2)=0$.
    Now $M$ is by definition $\ad\Nu(S)$. More precisely, $M=\End{\Nu(S)}$ with the action $g\cdot m = \rho(g)m\rho(g^{-1})$,
    where $\rho:\tPMod{S} \ra \GL{\Nu(S)}$ is the quantum representation. In what follows we will simply denote the
    endomorphism $\rho(g)\in \GL{\Nu(S)}$ by $g$. We will also use the notation $[\cdot,\cdot]$ for the
    Lie bracket $[u,v]=u\circ v- v\circ u$ in $M=\End{\Nu(S)}$. Let us continue our computation.
    \[\begin{array}{ll}
    \psi(g_1g_2g_1^{-1}g_2^{-1}) &= g_1mg_1^{-1} - m - g_2g_1mg_1^{-1}g_2^{-1} + g_2mg_2^{-1} \\
                    &= (g_1mg_2 - mg_1g_2 - g_2g_1m + g_2mg_1)g_1^{-1}g_2^{-1} \\
                    &= -[ g_1,[ g_2,m]] g_1^{-1}g_2^{-1}.
    \end{array}\]
    Hence, if $m$ is in the kernel of $f\circ \delta$, for all $g_1\in \Gamma_1^c$ and $g_2\in \Gamma_2^c$,
    $[ g_1,[ g_2,m]] =0$.

    \textbf{(3) Reduction to \Cref{linearalgebra}}.
    From the gluing axiom applied along $c$, we have a decomposition:
    $$\Nu(S)=\bigoplus_{\underline{\lambda}} \Nu(S_{1},\underline{\lambda})\otimes \Nu(S_{2},\underline{\lambda}).$$
    Let $M' = \mathrm{End}(S_{1},\underline{\lambda_c})\otimes \mathrm{End}(S_{2},\underline{\lambda_c})$.
    As $M=\mathrm{End}(\Nu(S))$, we can decompose $M$ as:
    $$M=\bigoplus_{\underline{\lambda}}\bigoplus_{\underline{\mu}} \Hom{\Nu(S_{1},\underline{\lambda})}{\Nu(S_{1},\underline{\mu})}
    \otimes \Hom{\Nu(S_{2},\underline{\lambda})}{\Nu(S_{2},\underline{\mu})}.$$
    One of these summands is $M'$. Thus we have a natural projection $M\lra M'$.
    Moreover, as elements of $\Gamma_1^c$ and $\Gamma_2^c$ preserve this
    decomposition of $M$, their actions commute with this projection $M\lra M'$.

    Hence if we denote by $m'$ the image of $m$ in $M'$, provided $m$ is in the kernel of $f\circ \delta$,
    we have $[ g_1,[ g_2,m']] =0$ in $M'$ for any $g_i\in \Gamma_i^c$, $i\in\{1,2\}$.

    As $\Nu$ is irreducible on $S_1$, the image of $\C[\Gamma_1^c]$ in $M'$ is 
    $E_2 = \id\otimes \mathrm{End}(S_{2},\underline{\lambda_c})$. Similarly, the image of $\C[\Gamma_2^c]$ in $M'$ is 
    $E_1 = \mathrm{End}(S_{1},\underline{\lambda_c})\otimes\id$.

    Let $m$ be in the kernel of $f\circ \delta$. Then, from the gluing axiom applied to $S$ along $a_1$ and $a_2$ we have the decomposition:
    $$\Nu(S)=\bigoplus_{\mu_1,\mu_2} \Nu(S_{a_1,a_2},\mu_1,\mu_2).$$
    As $\Nu$ is irreducible on the components of $S_{a_1,a_2}$ and as $m$ commutes to $\Gamma_{12}$, $m$ decomposes as:
    $$m=\sum_{\mu_1,\mu_2} c_{\mu_1\mu_2}\id_{\Nu(S_{a_1,a_2},\mu_1,\mu_2)}$$
    with $c_{\mu_1\mu_2}\in\C$.
    Thus $m'$ decomposes as:
    $$m'=\sum_{\mu_1,\mu_2} c_{\mu_1\mu_2}\id_{\Nu(S_{1,a_1},\underline{\lambda_c},\mu_1)}\otimes \id_{\Nu(S_{2,a_2},\underline{\lambda_c},\mu_2)}.$$

    Now, applying \Cref{linearalgebra} with $V_1=\Nu(S_{1},\underline{\lambda_c})$
    and $V_2=\Nu(S_{2},\underline{\lambda_c})$, we get that there exists
    $a$ and $b$ such that whenever $\Nu(S_{a_1,a_2,c},\mu_1,\mu_2,\underline{\lambda_c})\neq 0$, one has $c_{\mu_1,\mu_2}=a_{\mu_1}+b_{\mu_2}$.
    As, by hypothesis, $\Nu(S_{a_1,a_2,c},\mu_1,\mu_2,\underline{\lambda_c})\neq 0$ whenever $\Nu(S_{a_1,a_2},\mu_1,\mu_2)\neq 0$, this just means
    that $m$ is in the image of $M^{\Gamma_1}\oplus M^{\Gamma_2}\lra M^{\Gamma_{12}}$, ie. in the kernel of $\delta$.

    Thus $f$ is injective on the image of $\delta$, and the lemma is proved.
\end{proof}

\begin{lemma}\label{linearalgebra}
    Let $V_i = \bigoplus_{\mu_i} V_i(\mu_i)$, $i=1,2$, be vector spaces. Set $M_i=\{\mu_i\mid V_i(\mu_i)\neq 0\}$. Let:
    $$u=\sum_{\mu_1,\mu_2}c_{\mu_1\mu_2}\id_{V_1(\mu_1)}\otimes \id_{V_2(\mu_2)}\in\End{V_1\otimes V_2}$$
    with $c_{\mu_1\mu_2}\in \C$. Let $E_1 = \End{V_1}\otimes \id$ and $E_2 = \id \otimes \End{V_2}$.
    Then if:
    $$[ E_2,[ E_1,u]] =0$$
    there exist $(a_\mu)_{\mu\in M_1}$, $(b_\mu)_{\mu\in M_2}$ such that for all $\mu_i\in M_i$, $i=1,2$:
    $$c_{\mu_1\mu_2} = a_{\mu_1} + b_{\mu_2}.$$
    Here $[\cdot,\cdot]$ denotes the Lie bracket.
\end{lemma}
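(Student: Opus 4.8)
The plan is to translate the hypothesis into a single combinatorial identity on the scalars $c_{\mu_1\mu_2}$ --- the vanishing of their ``mixed second difference'' --- and then reconstruct the splitting $c_{\mu_1\mu_2}=a_{\mu_1}+b_{\mu_2}$ by choosing base points. Throughout, write $P_1(\mu_1)\in\End{V_1}$ and $P_2(\mu_2)\in\End{V_2}$ for the projections onto $V_1(\mu_1)$ and $V_2(\mu_2)$, so that $u=\sum_{\mu_1,\mu_2}c_{\mu_1\mu_2}\,P_1(\mu_1)\otimes P_2(\mu_2)$.

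The computational heart is the following. Fix $\mu_1,\nu_1\in M_1$ and $\mu_2,\nu_2\in M_2$. Since $V_1(\mu_1),V_1(\nu_1)\neq 0$, one can pick a nonzero $X\in\End{V_1}$ with $X=P_1(\nu_1)\,X\,P_1(\mu_1)$, and likewise a nonzero $Y\in\End{V_2}$ with $Y=P_2(\nu_2)\,Y\,P_2(\mu_2)$; then $X\otimes\id\in E_1$ and $\id\otimes Y\in E_2$. Using $X\,P_1(\alpha)=\delta_{\alpha\mu_1}X$ and $P_1(\alpha)\,X=\delta_{\alpha\nu_1}X$ one gets
$$[X\otimes\id,\,u]=\sum_{\beta}\big(c_{\mu_1\beta}-c_{\nu_1\beta}\big)\,X\otimes P_2(\beta),$$
and then, using $Y\,P_2(\beta)=\delta_{\beta\mu_2}Y$ and $P_2(\beta)\,Y=\delta_{\beta\nu_2}Y$,
$$[\id\otimes Y,\,[X\otimes\id,\,u]]=\Big(\big(c_{\mu_1\mu_2}-c_{\nu_1\mu_2}\big)-\big(c_{\mu_1\nu_2}-c_{\nu_1\nu_2}\big)\Big)\,X\otimes Y.$$
As $X\otimes Y\neq 0$, the hypothesis $[E_2,[E_1,u]]=0$ forces $c_{\mu_1\mu_2}-c_{\nu_1\mu_2}-c_{\mu_1\nu_2}+c_{\nu_1\nu_2}=0$ for all $\mu_1,\nu_1\in M_1$ and $\mu_2,\nu_2\in M_2$.

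To finish, if $M_1$ or $M_2$ is empty there is nothing to prove; otherwise fix $\mu_1^0\in M_1$, $\mu_2^0\in M_2$ and set $a_{\mu_1}=c_{\mu_1\mu_2^0}$ and $b_{\mu_2}=c_{\mu_1^0\mu_2}-c_{\mu_1^0\mu_2^0}$. Specializing the identity above to $(\nu_1,\nu_2)=(\mu_1^0,\mu_2^0)$ gives $c_{\mu_1\mu_2}=c_{\mu_1\mu_2^0}+c_{\mu_1^0\mu_2}-c_{\mu_1^0\mu_2^0}=a_{\mu_1}+b_{\mu_2}$, which is exactly what is wanted. There is no real obstacle here: the only thing to watch is the bookkeeping in the double commutator, and once $u$, $X\otimes\id$ and $\id\otimes Y$ are written through the projections $P_i(\cdot)$ the whole computation collapses to the elementary relations above.
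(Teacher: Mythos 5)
Your proof is correct and is essentially the same as the paper's: both test the double commutator against elementary operators $X \in \Hom{V_1(\mu_1)}{V_1(\nu_1)}$, $Y \in \Hom{V_2(\mu_2)}{V_2(\nu_2)}$ to extract the vanishing of the mixed second difference $c_{\mu_1\mu_2}-c_{\nu_1\mu_2}-c_{\mu_1\nu_2}+c_{\nu_1\nu_2}$, and then fix base points to define $a_{\mu_1}$ and $b_{\mu_2}$. The only cosmetic differences are your use of explicit projection notation $P_i(\cdot)$ in place of the paper's $\id_{V_i(\mu_i)}$ and your explicit handling of the trivial case $M_i=\varnothing$.
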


\begin{proof}
    Let $\lambda_i,\nu_i\in M_i$ for $i=1,2$. Let $m_{\lambda_i\nu_i} \in \Hom{V_i(\lambda_i)}{V_i(\nu_i)}$ for $i=1,2$.
    Define $x_1 = m_{\lambda_1\nu_1}\otimes \id_{V_2}$ and $x_2 = \id_{V_1}\otimes m_{\lambda_2\nu_2}$.
    Now:
    \[\begin{array}{lcl}
        [ x_1,u]  &=& x_1\circ u - u\circ x_1 \\
                &=& \sum_{\mu_2} c_{\nu_1\mu_2}m_{\lambda_1\nu_1}\otimes\id_{V_2(\mu_2)} - c_{\lambda_1\mu_2}m_{\lambda_1\nu_1}\otimes\id_{V_2(\mu_2)} \\
                &=& m_{\lambda_1\nu_1}\otimes \big(\sum_{\mu_2} (c_{\nu_1\mu_2} - c_{\lambda_1\mu_2})\id_{V_2(\mu_2)}\big).
    \end{array}
    \]
    And:
    \[\begin{array}{lcl}
        [ x_2,[ x_1,u]]  &=& x_2\circ [ x_1,u] - [ x_1,u] \circ x_2 \\
                &=& m_{\lambda_1\nu_1}\otimes (c_{\nu_1\nu_2} - c_{\lambda_1\nu_2})m_{\lambda_2\nu_2} \\
                &&-m_{\lambda_1\nu_1}\otimes (c_{\nu_1\lambda_2} - c_{\lambda_1\lambda_2})m_{\lambda_2\nu_2} \\
                &=&(c_{\nu_1\nu_2} - c_{\lambda_1\nu_2} - c_{\nu_1\lambda_2} + c_{\lambda_1\lambda_2})m_{\lambda_1\nu_1}\otimes m_{\lambda_2\nu_2}.
    \end{array}
    \]
    As the last line must be $0$, one has for all $\lambda_i,\nu_i\in M_i$, $i=1,2$:
    $$c_{\nu_1\nu_2} - c_{\lambda_1\nu_2} - c_{\nu_1\lambda_2} + c_{\lambda_1\lambda_2} = 0.$$
    One can check that for fixed $\nu_i\in M_i$ for $i=1,2$, $a_{\mu_1} = c_{\mu_1\nu_2}$ and $b_{\mu_2} = c_{\nu_1\mu_2} - c_{\nu_1\nu_2}$
    verify the claim of the lemma.
\end{proof}

\section{\texorpdfstring{Proof of \Cref{induction}}{Proof of Theorem \ref{induction}}}\label{sectioninduction}

The idea behind the proof of \Cref{induction} is to proceed by induction on the genus and number of marked points
by repeated use of \Cref{mainlemma}. 

\begin{lemma}[Center Kills]\label{centerkills}
    Let $G$ be a group and $M$ be a $\C[G]$-module.
    Let $Z\subset G$ be a finite central subgroup of $G$ such that its action on $M$ is given by a non-trivial character $\chi:Z\lra \C^*$.
    Then $\mathrm{H}^1(G,M)=0$.
\end{lemma}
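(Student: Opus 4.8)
The plan is to use the classical averaging argument over the finite central subgroup $Z$. First I would pick a $1$-cocycle $\varphi: G \lra M$ representing an arbitrary class in $\mathrm{H}^1(G,M)$, and restrict attention to its values on $Z$. Since $Z$ is central, conjugation by any $g \in G$ permutes $Z$ trivially, and combining this with the cocycle relation $\varphi(zg) = \varphi(z) + z\cdot\varphi(g)$ and $\varphi(gz) = \varphi(g) + g\cdot\varphi(z)$ forces a compatibility between $\varphi(g)$ and $\varphi(z)$. The key computation is that for $z \in Z$ and $g \in G$, one gets $(z-1)\cdot\varphi(g) = (g-1)\cdot\varphi(z)$ as elements of $M$.

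The main step is then to show that $\varphi$ is a coboundary. For this I would average: set $m_0 = \frac{1}{|Z|}\sum_{z \in Z} \chi(z)^{-1}\,\varphi(z) \in M$, or rather work on the $\chi$-isotypic decomposition. Since $\chi$ is non-trivial, the operator $\frac{1}{|Z|}\sum_{z\in Z}\chi(z)^{-1}(z\cdot -)$ acting on $M$ is the projection onto the $\chi$-isotypic component, which here is all of $M$ because $Z$ acts by $\chi$ everywhere. Using the relation above, one shows $\varphi(g) - (g-1)\cdot m = 0$ for an appropriate $m$ built from the $\varphi(z)$; concretely, because $Z$ acts by the single character $\chi \neq 1$, there is $z_0 \in Z$ with $\chi(z_0) \neq 1$, and then $(z_0 - 1)$ acts invertibly on $M$ (it acts as the scalar $\chi(z_0)-1 \neq 0$), so from $(z_0-1)\cdot\varphi(g) = (g-1)\cdot\varphi(z_0)$ we may solve $\varphi(g) = (g-1)\cdot\big((\chi(z_0)-1)^{-1}\varphi(z_0)\big)$, exhibiting $\varphi$ as the coboundary of $m := (\chi(z_0)-1)^{-1}\varphi(z_0)$.

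I expect the only mild subtlety — not really an obstacle — to be bookkeeping the sign and side conventions in the cocycle identity (left versus right action, $\varphi(g^{-1}) = -g^{-1}\cdot\varphi(g)$, etc.) so that the displayed relation $(z_0-1)\cdot\varphi(g) = (g-1)\cdot\varphi(z_0)$ comes out correctly; once that is pinned down, invertibility of $\chi(z_0)-1$ on $M$ finishes the argument immediately. This is exactly the standard "center kills cohomology" lemma, so no genuinely hard point arises; the proof is a few lines.
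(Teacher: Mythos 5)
Your proof is correct, but it takes a genuinely different route from the paper's. The paper first observes $M^Z=0$ (from non-triviality of $\chi$), then invokes the inflation--restriction sequence for the normal subgroup $Z$,
\[
0\lra \mathrm{H}^1(G/Z,M^Z)\lra \mathrm{H}^1(G,M)\lra \mathrm{H}^1(Z,M),
\]
and kills the right-hand term using that $Z$ is finite so every $\C[Z]$-module is projective. Your argument is a direct cocycle computation: from centrality and the cocycle identity you derive $(z-1)\cdot\varphi(g)=(g-1)\cdot\varphi(z)$, pick $z_0$ with $\chi(z_0)\neq 1$ so that $z_0-1$ acts as the invertible scalar $\chi(z_0)-1$, and solve to write $\varphi$ explicitly as the coboundary of $m=(\chi(z_0)-1)^{-1}\varphi(z_0)$. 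This is more elementary (no inflation--restriction, no representation theory of finite groups) and in fact strictly more general: it never uses finiteness of $Z$, only the existence of a single central element acting by a scalar other than $1$. The averaging over $Z$ you float at the outset is a harmless detour --- once you notice that a single $z_0$ suffices, the averaging is unnecessary --- but it does not affect correctness. One small stylistic remark: you should state the cocycle convention ($\varphi(gh)=\varphi(g)+g\cdot\varphi(h)$, coboundary $\partial m(g)=g\cdot m - m$) explicitly so the displayed identity $(z_0-1)\cdot\varphi(g)=(g-1)\cdot\varphi(z_0)$ can be verified in one line; as written your proposal acknowledges this bookkeeping but does not carry it out.
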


\begin{proof}
    As $Z$ is central, it is a normal subgroup of $G$. Moreover, as $\chi$ is non-trivial, we have $M^Z=0$.
    Hence, applying \Cref{inflationrestriction}, we get an exact sequence:
    $$0\lra \mathrm{H}^1(G/Z,0)\lra \mathrm{H}^1(G,M)\lra \mathrm{H}^1(Z,M).$$
    Now, as $Z$ is finite, any $\C[Z]$-module is projective. Hence $\mathrm{H}^1(Z,M)=0$.

    Thus $\mathrm{H}^1(G,M)=0$.
\end{proof}

\begin{coro}\label{crossterms}
    Let $S$ be a compact surface with $\partial S\neq \varnothing$.
    Let $\underline{\lambda}$ and $\underline{\mu}$ be two colorings of $\partial S$.
    Assume $\Nu$ is a modular functor of level $\ell$ satisfying assumption \textbf{(III)}.
    If $\underline{\lambda}\neq \underline{\mu}$, then:
    $$\coh{\PModl{\ell}{S}}{\Nu(S,\underline{\lambda})^*\otimes \Nu(S,\underline{\mu})}=0.$$
\end{coro}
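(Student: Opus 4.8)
The plan is to use the central element $(\id,1)\in\tPMod{S}$ together with the Dehn twists along boundary-parallel curves to produce a \emph{finite} central subgroup of $\tPModl{\ell}{S}$ (or really of the relevant quotient) that acts on $\Nu(S,\underline{\lambda})^*\otimes\Nu(S,\underline{\mu})$ by a nontrivial scalar character, and then invoke \Cref{centerkills}. Since $S$ has nonempty boundary, each boundary component $B_i$ carries a Dehn twist $T_{\partial_i}$ along a curve parallel to $B_i$; these twists are central in $\tPMod{S}$ (they are supported in a collar and commute with everything fixing the boundary pointwise), and in the modular functor they act on $\Nu(S,\underline{\nu})$ by the scalar $r_{\nu_i}$ on the $i$-th factor, by the discussion in \Cref{remarklevel}. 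Hence $T_{\partial_i}$ acts on the coefficient module $\Nu(S,\underline{\lambda})^*\otimes\Nu(S,\underline{\mu})$ by the scalar $r_{\lambda_i}^{-1}r_{\mu_i}$.

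First I would observe that, since $\underline{\lambda}\neq\underline{\mu}$, there is an index $i$ with $\lambda_i\neq\mu_i$; by assumption \textbf{(III)} this gives $r_{\lambda_i}\neq r_{\mu_i}$, so $r_{\lambda_i}^{-1}r_{\mu_i}\neq 1$. Next I would check that $T_{\partial_i}$, viewed inside $\tPModl{\ell}{S}$, generates a finite central subgroup: it is central as noted, and it has finite order in the quotient because $r_{\lambda_i}^{\ell}=r_{\mu_i}^{\ell}=1$ (so the $\ell$-th power of $T_{\partial_i}$, which is killed in $\tPModl{\ell}{S}$, acts trivially — more precisely $T_{\partial_i}^{\ell}$ lies in the kernel-generating set, and combined with the central $(\id,1)^{4\ell}$ relation the image of $T_{\partial_i}$ has order dividing $\ell$). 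Let $Z=\langle T_{\partial_i}\rangle\subset\tPModl{\ell}{S}$; then $Z$ is finite, central, and acts on $M=\Nu(S,\underline{\lambda})^*\otimes\Nu(S,\underline{\mu})$ by the character sending $T_{\partial_i}$ to $r_{\lambda_i}^{-1}r_{\mu_i}\neq 1$, which is a nontrivial character into $\C^*$.

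Finally, applying \Cref{centerkills} with $G=\tPModl{\ell}{S}$, $M$ as above, and $Z$ as constructed yields $\mathrm{H}^1(\tPModl{\ell}{S},M)=0$. To get the statement as phrased for $\PModl{\ell}{S}$, I would note (as in the proof of the first Proposition of \Cref{appendixrigidity}) that the kernel of $\tPModl{\ell}{S}\to\PModl{\ell}{S}$ is finite, so by \Cref{inflationrestriction} the two $\mathrm{H}^1$ groups agree; alternatively the corollary as literally stated already concerns $\PModl{\ell}{S}$ and the same argument runs verbatim with the boundary Dehn twist there (its image is still finite, central, and acts by the nontrivial scalar $r_{\lambda_i}^{-1}r_{\mu_i}$ since that scalar only depends on the colors). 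The only point requiring care — and the main obstacle — is the bookkeeping showing $Z$ is genuinely finite and its character genuinely nontrivial in the quotient group, i.e. that passing to $\tPModl{\ell}{}$ has not accidentally trivialized the relevant twist; this is exactly where property \textbf{(III)} and the definition of level are used, and once those are in hand the rest is immediate from \Cref{centerkills}.
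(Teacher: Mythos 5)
Your argument is correct and follows essentially the same route as the paper: take the Dehn twist $T_{\partial_i}$ along a boundary component where $\underline{\lambda}$ and $\underline{\mu}$ differ, observe via \textbf{(III)} that it acts by the nontrivial scalar $r_{\lambda_i}^{-1}r_{\mu_i}$, note that it generates a finite central subgroup of $\PModl{\ell}{S}$, and invoke \Cref{centerkills}. The only difference is presentational: the paper works directly in $\PModl{\ell}{S}$ (where the argument is a three-line application of \Cref{centerkills}), whereas you pass through $\tPModl{\ell}{S}$ and then reconcile the two, which is harmless but unnecessary.
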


\begin{proof}
    Let $T_\gamma$ be a Dehn twist along a boundary component of $S$ on which $\underline{\lambda}$ and $\underline{\mu}$ differ.
    Then, by \textbf{(III)}, $Z=\langle T_\gamma \rangle$ acts non-trivially by scalars on $\Nu(S,\underline{\lambda})^*\otimes \Nu(S,\underline{\mu})$.
    As $Z$ is central and finite in $\PModl{\ell}{S}$, the result is a consequence of \Cref{centerkills}.
\end{proof}

\begin{coro}\label{bettermainlemma}
    We assume $\Nu$, $\ell$, $S$, $a_i$, $c$, $\underline{\lambda_c}$ satisfy the hypotheses of \Cref{mainlemma}.

    We also assume $\Nu$ verifies \textbf{(III)}.

    Then for $i=1,2$:
    $$\coh{\PModl{\ell}{S_{a_i}}}{\ad\Nu(S)}=\bigoplus_{\mu_i}\coh{\PModl{\ell}{S_{a_i}}}{\ad\Nu(S_{a_i},\mu_i)}.$$
    In particular, we have an injective map:
    $$\coh{\PModl{\ell}{S}}{\ad\Nu(S)}\lra \bigoplus\limits_{i,\mu_i}\coh{\PModl{\ell}{S_{a_i}}}{\ad\Nu(S_{a_i},\mu_i)}.$$
\end{coro}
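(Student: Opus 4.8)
The plan is to combine \Cref{mainlemma} with \Cref{crossterms} and the gluing axiom \textbf{(G)}. The starting point is the decomposition coming from \textbf{(G)} applied to $S$ along $a_i$:
$$\Nu(S)=\bigoplus_{\mu_i}\Nu(S_{a_i},\mu_i,\mu_i).$$
Writing $M=\ad\Nu(S)=\End{\Nu(S)}$, this induces a decomposition
$$M=\bigoplus_{\mu_i,\nu_i}\Hom{\Nu(S_{a_i},\mu_i)}{\Nu(S_{a_i},\nu_i)}$$
as a $\PModl{\ell}{S_{a_i}}$-module, since the Dehn twist $T_{a_i}$ is central in $\PModl{\ell}{S_{a_i}}$ and the whole subgroup $\PModl{\ell}{S_{a_i}}$ preserves the block structure (it comes from $\tPMod{S_{a_i}}$, which acts on each $\Nu(S_{a_i},\mu_i)$). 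The diagonal blocks are exactly $\ad\Nu(S_{a_i},\mu_i)$, and the off-diagonal blocks are $\Nu(S_{a_i},\mu_i)^*\otimes\Nu(S_{a_i},\nu_i)$ with $\mu_i\neq\nu_i$.

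First I would invoke \Cref{crossterms}, applied to the surface $S_{a_i}$ (which has nonempty boundary, as it contains the two boundary components coming from cutting along $a_i$), with the two colorings differing at those components: this gives $\coh{\PModl{\ell}{S_{a_i}}}{\Nu(S_{a_i},\mu_i)^*\otimes\Nu(S_{a_i},\nu_i)}=0$ whenever $\mu_i\neq\nu_i$. Hence every off-diagonal block contributes nothing to the cohomology, and taking $\mathrm{H}^1(\PModl{\ell}{S_{a_i}},-)$ of the direct sum decomposition of $M$ yields exactly
$$\coh{\PModl{\ell}{S_{a_i}}}{\ad\Nu(S)}=\bigoplus_{\mu_i}\coh{\PModl{\ell}{S_{a_i}}}{\ad\Nu(S_{a_i},\mu_i)},$$
which is the first displayed claim.

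For the second claim, I would simply compose the injection furnished by \Cref{mainlemma},
$$\coh{\PModl{\ell}{S}}{\ad\Nu(S)}\lra \coh{\PModl{\ell}{S_{a_1}}}{\ad\Nu(S)}\oplus\coh{\PModl{\ell}{S_{a_2}}}{\ad\Nu(S)},$$
with the identification just established on each of the two summands, to obtain the injective map into $\bigoplus_{i,\mu_i}\coh{\PModl{\ell}{S_{a_i}}}{\ad\Nu(S_{a_i},\mu_i)}$. There is essentially no obstacle here: all the hypotheses of \Cref{mainlemma} and of \Cref{crossterms} are in force by assumption. The only point requiring a small amount of care is checking that the block decomposition of $M$ is genuinely a decomposition of $\PModl{\ell}{S_{a_i}}$-modules — i.e. that $\PModl{\ell}{S_{a_i}}$ acts on $\Nu(S)=\bigoplus_{\mu_i}\Nu(S_{a_i},\mu_i,\mu_i)$ respecting the summands — which follows from functoriality of the gluing isomorphism \textbf{(G)} with respect to the mapping classes of $S_{a_i}$ (equivalently, from \Cref{remarklevel}, $T_{a_i}$ acts block-diagonally by the distinct scalars $r_{\mu_i}$, so the blocks are its generalized eigenspaces and are therefore preserved by its centralizer, which contains $\PModl{\ell}{S_{a_i}}$).
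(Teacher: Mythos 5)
Your proposal is correct and follows essentially the same route as the paper: decompose $\Nu(S)$ along $a_i$ via the gluing axiom \textbf{(G)}, observe that $\ad\Nu(S)$ splits into diagonal blocks $\ad\Nu(S_{a_i},\mu_i)$ plus off-diagonal terms $\Nu(S_{a_i},\mu_i)^*\otimes\Nu(S_{a_i},\nu_i)$, kill the latter using \Cref{crossterms}, and then compose with the injection from \Cref{mainlemma}. The paper states this in one line; you have spelled out the same steps and added the (correct) sanity check that the block decomposition is $\PModl{\ell}{S_{a_i}}$-equivariant.
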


\begin{proof}
    Fix $i\in\{1,2\}$. From axiom \textbf{(G)}, we have:
    $$\Nu(S)=\bigoplus_{\mu_i}\Nu(S_{a_i},\mu_i).$$
    And thus:
    $$\ad\Nu(S)=\bigoplus_{\mu_i,\mu_i'}\Nu(S_{a_i},\mu_i')^*\otimes \Nu(S_{a_i},\mu_i).$$
    Now apply \Cref{crossterms} to conclude.
\end{proof}

\begin{lemma}[Künneth]\label{kunneth}
    Let $G_1$ and $G_2$ be $2$ groups. For $i=1,2$, let $M_i$ be a $\C[G_i]$-module. Then we have an isomorphism of graded vector spaces:
    $$H^*(G_1,M_1)\otimes H^*(G_2,M_2)\simeq H^*(G_1\times G_2,M_1\otimes M_2).$$
\end{lemma}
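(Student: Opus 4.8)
The plan is to compute both sides as $\mathrm{Ext}$ groups and compare free resolutions, using that we work over the field $\C$ so that no $\mathrm{Tor}$ corrections appear. Recall $H^*(G,M)=\mathrm{Ext}^*_{\C[G]}(\C,M)$, computed from any resolution $P_\bullet\to\C$ of the trivial module by free $\C[G]$-modules via the cochain complex $\mathrm{Hom}_{\C[G]}(P_\bullet,M)$. Fix such resolutions $P_\bullet\to\C$ over $\C[G_1]$ and $Q_\bullet\to\C$ over $\C[G_2]$. First I would check that the total complex $P_\bullet\otimes_\C Q_\bullet\to\C$ is a free resolution over $\C[G_1\times G_2]$: it is a complex of free $\C[G_1\times G_2]$-modules because $\C[G_1\times G_2]\cong\C[G_1]\otimes_\C\C[G_2]$ and a tensor product of free modules over the two factors is free over the tensor-product ring; and its homology is $H_\bullet(P_\bullet)\otimes_\C H_\bullet(Q_\bullet)=\C$ concentrated in degree $0$ by the Künneth formula for complexes of $\C$-vector spaces. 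Hence $H^*(G_1\times G_2,M_1\otimes_\C M_2)$ is the cohomology of $\mathrm{Hom}_{\C[G_1\times G_2]}(P_\bullet\otimes_\C Q_\bullet,\,M_1\otimes_\C M_2)$.

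The second step is to identify this cochain complex with $\mathrm{Hom}_{\C[G_1]}(P_\bullet,M_1)\otimes_\C\mathrm{Hom}_{\C[G_2]}(Q_\bullet,M_2)$ and then apply the Künneth formula for complexes of $\C$-vector spaces once more, which yields
$$H^*(G_1\times G_2,M_1\otimes_\C M_2)\cong H^*(G_1,M_1)\otimes_\C H^*(G_2,M_2).$$
There is an evident natural cochain map from $\mathrm{Hom}_{\C[G_1]}(P_\bullet,M_1)\otimes_\C\mathrm{Hom}_{\C[G_2]}(Q_\bullet,M_2)$ to $\mathrm{Hom}_{\C[G_1\times G_2]}(P_\bullet\otimes_\C Q_\bullet,M_1\otimes_\C M_2)$, and it is an isomorphism provided one of the resolutions, say $P_\bullet$, consists of \emph{finitely generated} free modules and the corresponding coefficient module $M_1$ is finite-dimensional: in that case the comparison reduces degreewise to the tautological identity $M_1^{\,r}\otimes_\C N\cong(M_1\otimes_\C N)^{\,r}$. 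These hypotheses hold in every application of the lemma in this paper, since the coefficient modules are finite-dimensional and the groups involved (quotients of mapping class groups, in fact fundamental groups of compact orbifolds) are of type $FP_\infty$ over $\C$.

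The only delicate point is this last commutation of $\mathrm{Hom}$ and $\otimes_\C$: without a finiteness hypothesis the natural map above is not surjective — already at the cochain level $\mathrm{Map}(A,\C)\otimes_\C\mathrm{Map}(B,\C)\subsetneq\mathrm{Map}(A\times B,\C)$ for infinite sets $A,B$ — so one genuinely needs a finitely generated resolution for one factor together with finite-dimensional coefficients there. (An alternative route is to work with the normalized bar resolutions and the Eilenberg–Zilber chain-homotopy equivalence $B_\bullet(G_1\times G_2)\simeq B_\bullet(G_1)\otimes_\C B_\bullet(G_2)$, but the same finiteness input is required when dualizing into $M_1\otimes_\C M_2$.) Everything else is formal.
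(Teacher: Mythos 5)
Your argument is essentially the paper's proof translated from topological into algebraic language: the paper works with CW models $X_i$ of $BG_i$ and cellular cochain complexes with local coefficients, identifies $C^*(X_1\times X_2;\mathcal{L}_1\otimes\mathcal{L}_2)$ with $C^*(X_1;\mathcal{L}_1)\otimes C^*(X_2;\mathcal{L}_2)$, and then invokes the K\"unneth theorem for complexes over a field. Since the cellular chain complex of the universal cover $\widetilde{X_i}$ is precisely a free $\C[G_i]$-resolution of $\C$ and cellular cochains with local coefficients are $\mathrm{Hom}_{\C[G_i]}$ out of it, your resolution-theoretic version is literally the same computation. Both proofs rest on the same two steps: the chain-level product decomposition and the algebraic K\"unneth formula over $\C$.

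Where you add value is in the last paragraph. The isomorphism of cochain complexes (equivalently, the commutation of $\mathrm{Hom}_{\C[G]}(-,M)$ with $\otimes_\C$) does require a finiteness hypothesis --- finitely many cells in each dimension, or equivalently a resolution by finitely generated free modules --- that the paper's proof states implicitly at best, and that the lemma as stated omits from its hypotheses. You are right that without it the natural map is only an injection, and right that in all applications in the paper (fundamental groups of compact K\"ahler orbifolds, finite-dimensional coefficient modules) the needed $FP_\infty$ condition holds. So your proof is correct and, on this point, more careful than the one in the paper.
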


\begin{proof}
    For $i=1,2$, let $X_i$ be a CW-complex modeling the classifying space of $G_i$, and let $\mathcal{L}_i$
    be the local system of coefficients on $X_i$ corresponding to $M_i$.
    Then $X_1\times X_2$ is a CW-complex modeling $G_1\times G_2$, and $M_1\otimes M_2$ corresponds to the local system of coefficients
    $\mathcal{L}_1\otimes\mathcal{L}_2$. Now, as the cells of $X_1\times X_2$ are products of cells, we have an isomorphism of cellular complexes:
    $$C^*(X_1\times X_2;\mathcal{L}_1\otimes\mathcal{L}_2)\simeq C^*(X_1;\mathcal{L}_1)\otimes C^*(X_2;\mathcal{L}_2).$$
    Hence, as these are complexes of vector spaces, the homological Künneth formula yields an isomorphism:
    $$H^*(X_1\times X_2;\mathcal{L}_1\otimes\mathcal{L}_2)\simeq H^*(X_1;\mathcal{L}_1)\otimes H^*(X_2;\mathcal{L}_2).$$
\end{proof}

From now on, we will use the following corollary of the Künneth formula without mentioning it.
It enables us to work component by component when computing local rigidity.

\begin{coro}
    Let $S=S_1\sqcup S_2$ be a surface with $2$ connected components. For $i=1,2$, let $\rho_i$ be a an irreducible complex representation
    of $\tPModl{\ell}{S_i}$. Then we have an isomorphism:
    $$H^1(\tPModl{\ell}{S},\ad (\rho_1\otimes \rho_2))\simeq H^1(\tPModl{\ell}{S_1},\ad \rho_1)\oplus H^1(\tPModl{\ell}{S_2},\ad \rho_2).$$
\end{coro}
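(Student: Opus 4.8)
The plan is to combine the Künneth formula (\Cref{kunneth}) with the fact that local rigidity of an irreducible representation only involves the adjoint representation on $\mathfrak{gl}$, which for a tensor product decomposes via the ``cross terms'' argument already used in \Cref{bettermainlemma}. First I would write $\tPModl{\ell}{S}\simeq\tPModl{\ell}{S_1}\times\tPModl{\ell}{S_2}$, since the mapping class group of a disjoint union is the product of the mapping class groups of the components (and the central $\Z$'s, once quotiented, match up). Under this identification the representation $\rho_1\otimes\rho_2$ is an external tensor product, so $\ad(\rho_1\otimes\rho_2)=\End{V_1\otimes V_2}=\End{V_1}\otimes\End{V_2}$ as a $\tPModl{\ell}{S_1}\times\tPModl{\ell}{S_2}$-module, where $V_i$ is the space of $\rho_i$.

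Then I would apply \Cref{kunneth} with $G_i=\tPModl{\ell}{S_i}$, $M_i=\End{V_i}=\ad\rho_i$, to get
$$H^1(\tPModl{\ell}{S},\ad(\rho_1\otimes\rho_2))\simeq\bigoplus_{p+q=1}H^p(\tPModl{\ell}{S_1},\ad\rho_1)\otimes H^q(\tPModl{\ell}{S_2},\ad\rho_2).$$
The two summands are $H^0(\tPModl{\ell}{S_1},\ad\rho_1)\otimes H^1(\tPModl{\ell}{S_2},\ad\rho_2)$ and $H^1(\tPModl{\ell}{S_1},\ad\rho_1)\otimes H^0(\tPModl{\ell}{S_2},\ad\rho_2)$. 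Now I would invoke irreducibility of $\rho_i$: by Schur's lemma, $H^0(\tPModl{\ell}{S_i},\ad\rho_i)=(\End{V_i})^{\tPModl{\ell}{S_i}}=\C\cdot\id$, which is one-dimensional. Hence each tensor factor $H^0\otimes H^1$ collapses to a copy of $H^1(\tPModl{\ell}{S_j},\ad\rho_j)$, and the isomorphism becomes exactly
$$H^1(\tPModl{\ell}{S},\ad(\rho_1\otimes\rho_2))\simeq H^1(\tPModl{\ell}{S_1},\ad\rho_1)\oplus H^1(\tPModl{\ell}{S_2},\ad\rho_2),$$
as claimed.

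There is no serious obstacle here; this is a bookkeeping corollary. The one point requiring a word of care is the identification $\tPModl{\ell}{S_1\sqcup S_2}\simeq\tPModl{\ell}{S_1}\times\tPModl{\ell}{S_2}$: strictly speaking $\tPMod{S_1\sqcup S_2}$ is a central extension by a single $\Z$ rather than by $\Z^2$, but after passing to the level-$\ell$ quotient (where $(\id,1)^{4\ell}$ is killed) and noting that $\ad$ is insensitive to the scalar central factor anyway, the splitting holds on the level of the modules that matter; alternatively one simply observes that $\ad\rho_i$ factors through $\PModl{\ell}{S_i}$ up to the finite center, and $\PModl{\ell}{S_1\sqcup S_2}=\PModl{\ell}{S_1}\times\PModl{\ell}{S_2}$ literally. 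I would state the lemma at the level of $H^1$ only, so that the higher cohomology produced by Künneth never needs to be tracked beyond the degree-$0$ and degree-$1$ pieces.
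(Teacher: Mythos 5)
Your proof is correct and follows the same route as the paper: identify $\tPModl{\ell}{S}$ with a product, note $\ad(\rho_1\otimes\rho_2)=\ad\rho_1\otimes\ad\rho_2$, apply the K\"unneth formula (\Cref{kunneth}), and use Schur's lemma to collapse the degree-$0$ factors to $\C$. Your remark about the central extension is a worthwhile refinement the paper silently elides: $\tPMod{S_1\sqcup S_2}$ is a quotient of $\tPMod{S_1}\times\tPMod{S_2}$ by an antidiagonal copy of $\Z$, so the displayed product identification is only up to a finite central kernel, and your observation that $\ad\rho_i$ factors through $\PModl{\ell}{S_i}$ (where the product holds literally) or that the finite kernel is invisible to $H^1$ with $\C$-coefficients is exactly what justifies it.
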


\begin{proof}
    For $i=1,2$, let $G_i=\tPModl{\ell}{S_i}$ and $M_i= \ad \rho_i$. Now $\tPModl{\ell}{S}=G_1\times G_2$
    and $\ad (\rho_1\otimes \rho_2)=\ad \rho_1\otimes \ad\rho_2$.
    Hence, applying \Cref{kunneth}, we have:
    \begin{multline*}
        H^1(G_1\times G_2,M_1\otimes M_2) \\
        \simeq H^0(G_1,M_1)\otimes H^1(G_2,M_2)\oplus H^1(G_1,M_1)\otimes H^0(G_2,M_2)
    \end{multline*}
    But as $\rho_i$ is irreducible, $H^0(G_i,\ad \rho_i)=\C$. Hence the result.
\end{proof}

We can now proceed with the proof of the theorem.

\begin{proof}[Proof of \Cref{induction}.]
    Since $g\geq 3$, by \Cref{removingthel}, we only need to prove local rigidity as representations of $\PModl{\ell}{S_g^n}$.

    \textbf{(1)} Let us first reduce to the case $g=3$.
    
    Let $g\geq 4$. Assume that for all $g'\in\{3,\dotsc,g-1\}$, $n'\geq 0$
    and $\underline{\lambda}'$ coloring of $\partial S_{g'}^{n'}$:
    $$\coh{\PModl{\ell}{S_{g'}^{n'}}}{\ad \Nu(S_{g'}^{n'},\underline{\lambda}')}=0.$$

    Let $n\geq 0$ and $\underline{\lambda}$ be any coloring of $\partial S_g^n$.
    We assume that $\Nu(S_g^n,\underline{\lambda})\neq 0$. Otherwise, the result is trivial.

    \begin{figure}
        \ctikzfig{graph_for_induction_on_g}
        \caption{Graph of a pair of pants decomposition.
        Edges correspond to \sccs, vertices to pairs of pants. 
        See the end of \Cref{modularfunctorbases} for more details.}
        \label[figure]{graph_for_induction_on_g}
    \end{figure}

    Then $S_g^n$ has a pair of pants decomposition with associated graph of the form described on \Cref{graph_for_induction_on_g}.
    Let us check that the cuts $a_1$, $a_2$ and $c$ verify the hypotheses of \Cref{mainlemma} for $\lambda_c=0$.
    Let $\mu_1, \mu_2\in \Lambda$. We show that:
    $$\Nu(S_{a_1,a_2,c},\mu_1,\mu_2,\lambda_c)\neq 0.$$
    Let $\nu_u$ be a color that appears on the edge $u$ in an admissible coloring of the graph.
    Then, by \textbf{(I)}, there exists $\nu_y\in\Lambda$ such that $\Nu(S_0^3,\nu_u,\nu_y,\nu_y)\neq 0$.
    Hence, there exists an admissible coloring of the graph with $u$ colored with $\nu_u$, $y$ colored with $\nu_y$,
    $x$ and $c$ colored with $0$, $a_1$ colored with $\mu_1$ and $a_2$ colored with $\mu_2$.
    Thus $\Nu(S_{a_1,a_2,c},\mu_1,\mu_2,\lambda_c)\neq 0$.

    Now, as any component of $S_c$ or $S_{a_1,a_2}$ has genus at least $1$,
    $\Nu$ is irreducible on these for any colorings.

    Hence the hypotheses of \Cref{mainlemma} are verified. Moreover, as $\Nu$ is assumed to verify \textbf{(III)},
    the conclusion of \Cref{bettermainlemma} holds, ie. the map:
    $$\coh{\PModl{\ell}{S}}{\ad\Nu(S)}\lra \bigoplus\limits_{i,\mu_i}\coh{\PModl{\ell}{S_{a_i}}}{\ad\Nu(S_{a_i},\mu_i)}$$
    is injective. But, by the induction hypothesis we made, the right hand side is $0$.
    Hence $\coh{\PModl{\ell}{S}}{\ad\Nu(S)}=0$.

    \textbf{(2)} The case $g=3$ remains. Let $n>4$. Assume that for all $n'\in\{0,\dotsc,n-1\}$ and any coloring $\underline{\lambda}'$
    of $\partial S_3^{n'}$, $\coh{\PModl{\ell}{S_{3}^{n'}}}{\Nu(S_{3}^{n'},\underline{\lambda}')}=0$.

    Let $\underline{\lambda}$ be any coloring of $\partial S_3^n$.
    Again, we assume $\Nu(S_3^n,\underline{\lambda})\neq 0$. Otherwise, the result is trivial.

    \begin{figure}
        \ctikzfig{graph_for_induction_on_n}
        \caption{Graph of a pair of pants decomposition.
        Edges correspond to \sccs, legs to boundary components and vertices to pairs of pants.
        See the end of \Cref{modularfunctorbases} for more details.}
        \label[figure]{graph_for_induction_on_n}
    \end{figure}

    Then $S_3^n$ has a pair of pants decomposition with associated graph of the form described on \Cref{graph_for_induction_on_n}.
    Let us verify that the cuts $a_1$, $a_2$ and $c$ verify the hypotheses of \Cref{mainlemma} for $\lambda_c=0$.
    Let $\mu_1, \mu_2\in \Lambda$. We show that:
    $$\Nu(S_{a_1,a_2,c},\mu_1,\mu_2,\lambda_c)\neq 0.$$
    Let $\nu_u$ be a color that appears on the edge $u$ in an admissible coloring of the graph.
    Then, as above, by \textbf{(I)}, there exists an admissible coloring of the graph with $u$ colored with $\nu_u$,
    $x$ and $c$ colored with $0$, $a_1$ colored with $\mu_1$ and $a_2$ colored with $\mu_2$.
    Thus $\Nu(S_{a_1,a_2,c},\mu_1,\mu_2,\lambda_c)\neq 0$.

    Now, every component of $S_c$ or $S_{a_1,a_2}$ has genus at least $1$ or is homeomorphic to $S_0^3$.
    Since $\Nu$ has dimension at most $1$ on any coloring of $S_0^3$,
    $\Nu$ is irreducible on the components of $S_c$ and $S_{a_1,a_2}$ for any colorings.

    As above, by \Cref{bettermainlemma} and the induction hypothesis, we have:
    $$\coh{\PModl{\ell}{S}}{\ad\Nu(S)}=0.$$

    We are thus reduced to the cases where $g=3$ and $n\in\{0,1,2,3\}$.
\end{proof}

\section{\texorpdfstring{Proof of \Cref{rigidityinlowlevel}}{Proof of Theorem \ref{rigidityinlowlevel}}}\label{sectionrigidityinlowlevel}

In this section, we prove \Cref{rigidityinlowlevel}. The proof is in $2$ steps. The first step is to prove rigidity for some small surfaces,
and is the content of \Cref{initializationeasy}, \Cref{S04} and \Cref{S05}.
The second step is to perform an induction on $g$ and $n$ as in the proof of \Cref{induction}.
However, because the assumptions of \Cref{mainlemma} are not always verified,
we have to adapt the techniques of the lemma to the various cases, see \Cref{S06}, \Cref{S0n} and proof of the theorem.

In the context of modular functors, we know that a boundary component colored with $0$ can be removed without changing the module.
However, when considering rigidity, the source group changes. The following lemma shows that this does not change
the space of deformations.

\begin{lemma}\label{removezeros}
    Let $\Nu$ be a modular functor of level $\ell>1$. 
    Let $S$ be a colored surface and $B\subset \partial S$ a boundary component colored with $0$.
    Let $\widehat{S}$ be the surface obtained by capping $S$ with a disk along $B$, ie. $\widehat{S}=S\cup_B D^2$.

    There is a natural isomorphism of $\tPModl{\ell}{S}$-modules
    $\Nu(S)\simeq\Nu(\widehat{S})$
    and a group morphism $\PModl{\ell}{S}\ra \PModl{\ell}{\widehat{S}}$,
    which induce an isomorphism:
    $$\coh{\PModl{\ell}{S}}{\ad\Nu(S)}\simeq\coh{\PModl{\ell}{\widehat{S}}}{\ad\Nu(\widehat{S})}.$$
\end{lemma}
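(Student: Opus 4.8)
The plan is to deduce the statement from the Inflation--Restriction sequence (\Cref{inflationrestriction}). First I would set up the comparison data. Applying the gluing axiom \textbf{(G)} to the disjoint union $S\sqcup S_0^1$ glued along $B$ and $\partial S_0^1$, and using axiom \textbf{(1)} (which forces the gluing color to be $0$), produces a natural isomorphism $\Nu(S)\simeq\Nu(\widehat{S})$, where $\Nu(S)$ has $B$ colored with $0$. Since $S$ and $\widehat{S}$ have the same underlying surface after capping all boundary components, a Lagrangian for $S$ is literally the same datum as a Lagrangian for $\widehat{S}$ and the Maslov cocycle of \Cref{definitioncolouredcategory} is unchanged; hence extending homeomorphisms by the identity on the capping disk defines compatible surjections $\tPModl{\ell}{S}\to\tPModl{\ell}{\widehat{S}}$ and $\phi\colon\PModl{\ell}{S}\to\PModl{\ell}{\widehat{S}}$ (one checks that $\ell$-th powers of Dehn twists and $(\id,1)^{4\ell}$ are sent where they should be), with respect to which the isomorphism above is equivariant. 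In particular $M:=\ad\Nu(S)$ is the pullback $\phi^{*}\ad\Nu(\widehat{S})$, and the map of the statement is the inflation map attached to $\phi$.

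Let $K=\ker\phi$. Since $K$ acts trivially on $M$ we have $M^{K}=M$ (and as a $\PModl{\ell}{\widehat{S}}$-module this is $\ad\Nu(\widehat{S})$) and $\mathrm{H}^{1}(K,M)=\Hom{K}{M}$. Applying \Cref{inflationrestriction} to $K\subset\PModl{\ell}{S}$ gives an exact sequence
$$0\lra \coh{\PModl{\ell}{\widehat{S}}}{\ad\Nu(\widehat{S})}\lra \coh{\PModl{\ell}{S}}{\ad\Nu(S)}\lra \Hom{K}{M},$$
in which the first arrow is precisely our map. So it suffices to show $\Hom{K}{M}=0$.

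For this I would exhibit a generating set of $K$ consisting of elements that are torsion in $\PModl{\ell}{S}$. As $\PMod{S}\to\PMod{\widehat{S}}$ is surjective and every $\ell$-th power of a Dehn twist on $\widehat{S}$ lifts to such a power on $S$, the group $K$ is the image in $\PModl{\ell}{S}$ of $\ker(\PMod{S}\to\PMod{\widehat{S}})$; and by the capping and Birman exact sequences (see \cite{farbPrimerMappingClass2011}) this last kernel is generated by the Dehn twist $T_{\beta}$ along a curve $\beta$ parallel to $B$, together with the point-pushing maps $\mathrm{Push}(\gamma)$ along simple loops $\gamma$ representing a set of generators of $\pi_{1}(\widehat{S})$. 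Now $T_{\beta}^{\ell}=1$ in $\PModl{\ell}{S}$; and each $\mathrm{Push}(\gamma)$ is of the form $T_{c_{1}}T_{c_{2}}^{-1}$ for two \emph{disjoint} simple closed curves $c_{1},c_{2}$ (the boundary components other than $B$ of a pair-of-pants neighborhood of $\gamma\cup B$), so $\mathrm{Push}(\gamma)^{\ell}=T_{c_{1}}^{\ell}T_{c_{2}}^{-\ell}=1$ in $\PModl{\ell}{S}$. Hence $K$ is generated by torsion elements, so $K^{\mathrm{ab}}\otimes\Q=0$ and, $M$ being a $\C$-vector space, $\Hom{K}{M}=0$. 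By the exact sequence above, the inflation map is an isomorphism, as claimed.

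The main obstacle is the identification of $K$: one must invoke the correct versions of the capping and Birman exact sequences and check separately the few small surfaces $\widehat{S}$ (a disk, an annulus, or a torus) for which the Birman sequence degenerates --- but in those cases $\ker(\PMod{S}\to\PMod{\widehat{S}})$ is even smaller and still generated by $T_{\beta}$ and products of commuting Dehn twists, so the argument goes through unchanged.
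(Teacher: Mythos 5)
Your proof is essentially the same as the paper's: both rely on the Birman/capping exact sequence to identify the kernel $K$ of $\PModl{\ell}{S}\to\PModl{\ell}{\widehat{S}}$, then use the Inflation--Restriction sequence and the observation that $K$ is generated by $\ell$-torsion elements (the boundary Dehn twist and point-pushing maps, each of the form $T_{c_1}T_{c_2}^{-1}$) to kill $\Hom{K}{M}$. The paper phrases this through $\pi_1(U\widehat{S})$ and its standard generators $z,\gamma_1,\dots,\gamma_d$, but the content and the torsion argument are identical to yours.
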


\begin{proof}
    Let us remind the Birman exact sequence (\cite[4.2.5]{farbPrimerMappingClass2011}):
    $$1\lra\pi_1(U\widehat{S})\xlra{Push}\PMod{S}\lra\PMod{\widehat{S}}\lra 1.$$
    Here $U\widehat{S}$ denotes the unitary tangent bundle of $\widehat{S}$.
    Taking the quotient by the $\ell$-th powers of Dehn twists, we get an exact sequence:
    $$\pi_1(U\widehat{S})\xlra{Push}\PModl{\ell}{S}\lra\PModl{\ell}{\widehat{S}}\lra 1.$$
    Let us denote by $K$ the image of $Push$.

    The Inflation-Restriction sequence (\Cref{inflationrestriction}) is in this setting:
    $$0\ra\coh{\PModl{\ell}{\widehat{S}}}{\ad \Nu(\widehat{S})}\ra\coh{\PModl{\ell}{S}}{\ad \Nu(S)}\ra \Hom{K}{\ad \Nu(S)}.$$
    Now $\pi_1(U\widehat{S})$ is generated by $z$ and $\gamma_1,\dotsc,\gamma_d$ where $z$ is a simple loop on any fiber of
    $U\widehat{S}\lra \widehat{S}$ and the $\gamma_i$ are lifts of simple closed curves.
    As for their images in $\PModl{\ell}{S}$, $Push(z)$ is the Dehn twist along $B$ and $Push(\gamma_i)$ is a product
    of powers of two commuting Dehn twists. In particular, $Push(z)^\ell=1$ and $Push(\gamma_i)^\ell=1$ in $\PModl{\ell}{S}$.
    Thus $K^{\mathrm{ab}}$ is finite and $\Hom{K}{\ad \Nu(S)}=0$. Hence:
    $$\coh{\PModl{\ell}{\widehat{S}}}{\ad \Nu(\widehat{S})}\simeq\coh{\PModl{\ell}{S}}{\ad \Nu(S)}.$$
\end{proof}

From now on in this section, $\Nu$ will denote the modular functor associated to the $\SO$ TQFT in level $5$.
All boundary components will be colored with $2$.

\begin{lemma}\label{initializationeasy}
    The $\SO$ TQFT in level $5$ is locally rigid on $S_0^2$, $S_0^3$, $S_1$  and $S_1^1$. 
\end{lemma}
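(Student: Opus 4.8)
The plan is to use the fact that for each of the surfaces $S_0^2$, $S_0^3$, $S_1$ and $S_1^1$, the relevant quotient $\PModl{5}{S}$ (equivalently $\tPModl{5}{S}$ up to a finite central kernel) is a \emph{finite} group, so that the adjoint cohomology vanishes automatically. Concretely, for a finite group $G$ and any $\C[G]$-module $M$, every $\C[G]$-module is projective, hence $\mathrm{H}^1(G,M)=0$; applying this to $M=\ad\Nu(S)$ gives local rigidity immediately. So the whole content of the lemma is the finiteness of these four mapping class group quotients.

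First I would recall why these quotients are finite. The surfaces $S_0^2$ and $S_0^3$ have mapping class groups that are already finite (indeed $\PMod{S_0^2}\cong\Z$ generated by the Dehn twist along the core curve, so $\PModl{5}{S_0^2}$ is cyclic of order dividing $5$; and $\PMod{S_0^3}$ is finite — it is generated by boundary Dehn twists which become torsion in the quotient). For $S_1$ one has $\PMod{S_1}\cong\mathrm{SL}_2(\Z)$, and the quotient by the $\ell$-th powers of all Dehn twists is a quotient of $\mathrm{SL}_2(\Z/\ell)$ (for $\ell=5$, of $\mathrm{SL}_2(\Z/5)$), which is finite; this is a classical computation, e.g.\ it appears in the literature on the finiteness of $\PModl{\ell}{S_1}$. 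For $S_1^1$ one uses the Birman exact sequence relating $\PMod{S_1^1}$ to $\PMod{S_1}$, or the known presentation of $\PMod{S_1^1}\cong B_3$ (the braid group on three strands), whose image in the $\ell$-level quotient is again finite. In each case I would cite \cite{farbPrimerMappingClass2011} for the presentations and the standard references for the finiteness of the level-$\ell$ quotients in genus $\leq 1$.

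Then I would assemble the argument: for each $S\in\{S_0^2,S_0^3,S_1,S_1^1\}$, the group $\tPModl{5}{S}$ is finite (it is a central extension by a finite group of the finite group $\PModl{5}{S}$), so $\coh{\tPModl{5}{S}}{\ad\Nu(S)}=0$ by the projectivity of modules over a finite group; passing to $\PModl{5}{S}$ via \Cref{inflationrestriction} as in the propositions of \Cref{appendixrigidity} (the kernel being finite) gives the stated local rigidity. Note that since $\dim\Nu(S_0^2)\leq 1$ the representation there is one-dimensional and the statement is essentially vacuous; the genuinely non-trivial cases are $S_1$ and $S_1^1$, where the module can be larger.

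The main obstacle is simply making precise — and correctly citing — the finiteness of $\PModl{5}{S_1}$ and $\PModl{5}{S_1^1}$: one must check that imposing $T_\gamma^5=1$ on all Dehn twists (not just a generating set of conjugacy classes) yields a finite quotient of $\mathrm{SL}_2(\Z)$, respectively of $B_3$. For $\mathrm{SL}_2(\Z)=\PMod{S_1}$ this follows because the two standard twist generators generate, their fifth powers are normally generated by a single such relation (all Dehn twists being conjugate in genus $1$), and $\mathrm{SL}_2(\Z)/\langle\!\langle t^5\rangle\!\rangle$ is a well-known finite group; the $S_1^1$ case reduces to this one through the Birman sequence, whose kernel $\pi_1(US_1)$ maps to powers of Dehn twists that become torsion. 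This is routine but is the only place requiring care, and I would spell it out with explicit generators and relations from \cite{farbPrimerMappingClass2011}.
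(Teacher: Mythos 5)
Your overall strategy is exactly the paper's: show that $\PModl{5}{S}$ is finite for each of the four surfaces, and then conclude $\coh{\PModl{5}{S}}{\ad\Nu(S)}=0$ because every $\C[G]$-module over a finite group is projective. For $S_0^2$ and $S_0^3$ this is immediate (the mapping class groups are free abelian on Dehn twists, which become torsion in the level quotient), and your treatment there is fine.

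However, your argument for the finiteness of $\PModl{5}{S_1}$ has the direction of the map backwards, and this is precisely the step you identify as ``the only place requiring care.'' The reduction $\mathrm{SL}_2(\Z)\to\mathrm{SL}_2(\Z/\ell)$ kills $T^\ell$, so $\mathrm{SL}_2(\Z)/\langle\!\langle T^\ell\rangle\!\rangle$ \emph{surjects onto} $\mathrm{SL}_2(\Z/\ell)$; it is not a quotient of it. Finiteness of a quotient says nothing about finiteness of the source, and in fact for $\ell\geq 6$ the group $\mathrm{SL}_2(\Z)/\langle\!\langle T^\ell\rangle\!\rangle$ is infinite while $\mathrm{SL}_2(\Z/\ell)$ is of course always finite. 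Your later appeal to ``$\mathrm{SL}_2(\Z)/\langle\!\langle t^5\rangle\!\rangle$ is a well-known finite group'' is the correct fact but is left unjustified, and it is exactly what must be proved. The paper supplies the missing argument: using $\mathrm{SL}_2(\Z)\simeq\Z/4*_{\Z/2}\Z/6$ with $T$ expressible in terms of the generators, killing $T^5$ exhibits the central quotient $\PModl{5}{S_1}/(\Z/2)$ as a quotient of the triangle group $TG(2,3,5)$, which is \emph{spherical} (hence finite) because $1/2+1/3+1/5>1$. This inequality is the genuine content — it is what singles out $\ell\leq 5$ and is why the lemma fails for higher prime levels. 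Your handling of $S_1^1$ via the central extension of $\PModl{5}{S_{1,1}}$ by the boundary twist is fine once the $S_1$ case is repaired, and matches the paper.
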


\begin{proof}
    The groups $\PModl{5}{S_0^2}$ and $\PModl{5}{S_0^3}$ are finite.
    The groups $\PMod{S_1}$ and $\PMod{S_{1,1}}$ are isomorphic to $\mathrm{SL}_2(\Z)$, via the linear action on $\R^2/\Z^2$.
    Moreover, one has $\mathrm{SL}_2(\Z)\simeq \Z/4*_{\Z/2}\Z/6$.
    This isomorphism is explicitly given by $S^{-1}$ and $ST$, where:
    $$S=\begin{pmatrix} 0 & -1 \\ 1 & 0 \end{pmatrix}\text{ and }T=\begin{pmatrix} 1 & 1 \\ 0 & 1 \end{pmatrix}.$$
    Then $T=S^{-1}ST$ is a Dehn twist in $\PMod{S_{1,1}}$ and $\PMod{S_1}$.
    Thus $T^5$ is trivial in $\PModl{5}{S_{1,1}}$ and $\PModl{5}{S_1}$.
    Hence the central quotients $\PModl{5}{S_{1,1}}/(\Z/2)$ and $\PModl{5}{S_1}/(\Z/2)$ are quotients of the triangular group $TG(2,3,5)$.
    As $1/2+1/3+1/5>1$, $TG(2,3,5)$ is spherical and thus finite.
    Hence $\PModl{5}{S_{1,1}}$ and $\PModl{5}{S_1}$ are finite.
    As $\PModl{5}{S_1^1}$ is an extension of $\PModl{5}{S_{1,1}}$ by $\Z/5$ or $\{1\}$, it is also finite.

    Finite groups do not have cohomology on $\C$-vector spaces,
    so the $\SO$ TQFT in level $5$ is necessarily locally rigid on $S_0^2$, $S_0^3$, $S_1$  and $S_1^1$.
\end{proof}

Here we used some explicit computations of Mapping Class Groups. They can be found in \cite[2.2]{farbPrimerMappingClass2011}.

\begin{lemma}\label{S04}
    The $\SO$ TQFT in level $5$ is locally rigid on $S_0^4$.
\end{lemma}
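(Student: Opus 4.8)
The mapping class group of $S_0^4$ is infinite (it surjects onto a free group), so we cannot argue as in \Cref{initializationeasy}. Instead, the strategy is a direct dimension count: compute $\dim \coh{\PModl{5}{S_0^4}}{\ad\Nu(S_0^4)}$ and show it is $0$. The key points are that $\PModl{5}{S_0^4}$ has a small explicit presentation, and the representation $\Nu(S_0^4, 2,2,2,2)$ has small dimension. By \Cref{colourconditions}, the admissible colorings of a pair-of-pants decomposition of $S_0^4$ (a graph with two trivalent vertices joined by one edge, and four legs colored $2$) with internal edge color $\mu \in \{0,2\}$ must satisfy $(\2,2,\mu)$ admissible, which holds for $\mu \in \{0,2\}$ since $\ell = 5$; so $\dim\Nu(S_0^4,2,2,2,2) = 2$, and $\ad\Nu(S_0^4)$ is $4$-dimensional.

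\textbf{Carrying it out.} First I would recall the standard presentation of $\PMod{S_0^4}$ (see \cite[Section 9.3]{farbPrimerMappingClass2011}): it is generated by the half-twists/Dehn twists $T_1,\dots$ around curves separating pairs of boundary components, with the classical relations (a chain relation plus the "lantern"-type relation, or equivalently a presentation exhibiting $\PMod{S_{0,4}}$ as an extension of a free group by its center). Passing to $\PModl{5}{S_0^4}$ adds the relations $T_i^5 = 1$. Next I would write down the matrices of the generators acting on the $2$-dimensional space $\Nu(S_0^4,2,2,2,2)$ in the pair-of-pants basis: one Dehn twist is diagonal with entries $r_0 = 1$ and $r_2 = \zeta_5^{2\cdot 4} = \zeta_5^{3}$ (from $r_\lambda = \zeta_\ell^{\lambda(\lambda+2)}$), and the others are obtained by conjugating by the change-of-basis ($S$- or fusion) matrices between different pants decompositions, which are explicit $2\times 2$ matrices over $\Q(\zeta_5)$. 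Then $\ad\rho$ is the $4$-dimensional conjugation representation, and I would set up the cochain complex: a $1$-cocycle is a tuple $(u_i) \in \mathfrak{gl}_2(\C)$ indexed by the generators satisfying one linear equation per relation, modulo the coboundaries $u_i = \rho(T_i) v \rho(T_i)^{-1} - v$. Computing the rank of this finite linear system — whose size is controlled, a handful of $4\times 4$ blocks — and checking it equals $\dim(\text{1-cochains}) - \dim(\text{coboundaries})$ gives $\mathrm{H}^1 = 0$.

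\textbf{The main obstacle.} The hard part is organizing the linear-algebra bookkeeping so the computation is both correct and presentable: one must fix a genuinely usable finite presentation of $\PModl{5}{S_0^4}$, pin down the fusion matrices with the right normalization, and verify that the relations are satisfied by the chosen matrices before computing cohomology. An alternative that sidesteps the explicit presentation: realize the representation as the monodromy of a hypergeometric-type local system on the $4$-punctured sphere (the $S_0^4$ quantum representation is classically of hypergeometric origin) and invoke rigidity of such rank-$2$ local systems directly — but I expect the paper prefers the self-contained dimension count, since \Cref{S04} is flagged in the introduction as proved "with a direct dimension computation." Either way, once $\mathrm{H}^1(\PModl{5}{S_0^4},\ad\Nu(S_0^4,2,2,2,2)) = 0$ is established — and the cases with some boundary color $0$ reduce to smaller surfaces by \Cref{removezeros} — the lemma follows.
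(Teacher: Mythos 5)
Your proposal identifies the right framework (the $4$-dimensional adjoint of a $2$-dimensional representation, finite presentation of $\PModl{5}{S_0^4}$, $1$-cocycles modulo coboundaries) but never actually carries out the decisive computation: the final step is ``computing the rank of this finite linear system \dots gives $\mathrm{H}^1=0$,'' which is asserted, not shown. Since the whole content of the lemma \emph{is} that computation, this leaves a genuine gap.

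The paper's own proof organizes the calculation differently and more structurally, in a way that avoids pinning down fusion matrices and an explicit presentation. It chooses two curves $a,b$ on $S_0^4$ whose stabilizers $\Gamma_a$, $\Gamma_b$ in $\PModl{5}{S_0^4}$ are \emph{finite}, forms the amalgam $G=\Gamma_a*_{\Gamma_{ab}}\Gamma_b$ (so $\PModl{5}{S_0^4}=G/R$), and applies Mayer--Vietoris: counting fixed vectors gives $\dim M^G=1$, $\dim M^{\Gamma_a}=\dim M^{\Gamma_b}=2$, $\dim M^{\Gamma_{ab}}=4$, hence $\dim\coh{G}{M}=1$. It then feeds this into inflation--restriction $0\to\coh{\PModl{5}{S_0^4}}{M}\to\coh{G}{M}\to\Hom{R}{M}$ and shows the last map is nonzero by evaluating an explicit cocycle on the relator $(T_aT_b)^5\in R$; this hinges on a short linear-algebra argument showing $\dim\bigl(\mathrm{Im}(\ad T_a-\id)\cap\mathrm{Im}(\ad(T_aT_b)-\id)\bigr)=1$, which rules out the cocycle vanishing on $R$. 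If you want to pursue your own route, you would still need to verify a concrete rank statement; the paper's amalgam-plus-MV argument is exactly a cleaner substitute for that brute-force rank check, and the key idea you are missing is the role of the lantern-type relator $(T_aT_b)^5$ and the perpendicularity trick $\mathrm{Im}(\ad T_a-\id)^\perp=\mathrm{Ker}(\ad T_a-\id)$ under the trace form.
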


\begin{figure}
    \def\svgwidth{0.5\linewidth}
    %% Creator: Inkscape 1.2.2 (b0a8486, 2022-12-01), www.inkscape.org
%% PDF/EPS/PS + LaTeX output extension by Johan Engelen, 2010
%% Accompanies image file 'S04.pdf' (pdf, eps, ps)
%%
%% To include the image in your LaTeX document, write
%%   \input{<filename>.pdf_tex}
%%  instead of
%%   \includegraphics{<filename>.pdf}
%% To scale the image, write
%%   \def\svgwidth{<desired width>}
%%   \input{<filename>.pdf_tex}
%%  instead of
%%   \includegraphics[width=<desired width>]{<filename>.pdf}
%%
%% Images with a different path to the parent latex file can
%% be accessed with the `import' package (which may need to be
%% installed) using
%%   \usepackage{import}
%% in the preamble, and then including the image with
%%   \import{<path to file>}{<filename>.pdf_tex}
%% Alternatively, one can specify
%%   \graphicspath{{<path to file>/}}
%% 
%% For more information, please see info/svg-inkscape on CTAN:
%%   http://tug.ctan.org/tex-archive/info/svg-inkscape
%%
\begingroup%
  \makeatletter%
  \providecommand\color[2][]{%
    \errmessage{(Inkscape) Color is used for the text in Inkscape, but the package 'color.sty' is not loaded}%
    \renewcommand\color[2][]{}%
  }%
  \providecommand\transparent[1]{%
    \errmessage{(Inkscape) Transparency is used (non-zero) for the text in Inkscape, but the package 'transparent.sty' is not loaded}%
    \renewcommand\transparent[1]{}%
  }%
  \providecommand\rotatebox[2]{#2}%
  \newcommand*\fsize{\dimexpr\f@size pt\relax}%
  \newcommand*\lineheight[1]{\fontsize{\fsize}{#1\fsize}\selectfont}%
  \ifx\svgwidth\undefined%
    \setlength{\unitlength}{472.19346414bp}%
    \ifx\svgscale\undefined%
      \relax%
    \else%
      \setlength{\unitlength}{\unitlength * \real{\svgscale}}%
    \fi%
  \else%
    \setlength{\unitlength}{\svgwidth}%
  \fi%
  \global\let\svgwidth\undefined%
  \global\let\svgscale\undefined%
  \makeatother%
  \begin{picture}(1,0.93539099)%
    \lineheight{1}%
    \setlength\tabcolsep{0pt}%
    \put(0,0){\includegraphics[width=\unitlength,page=1]{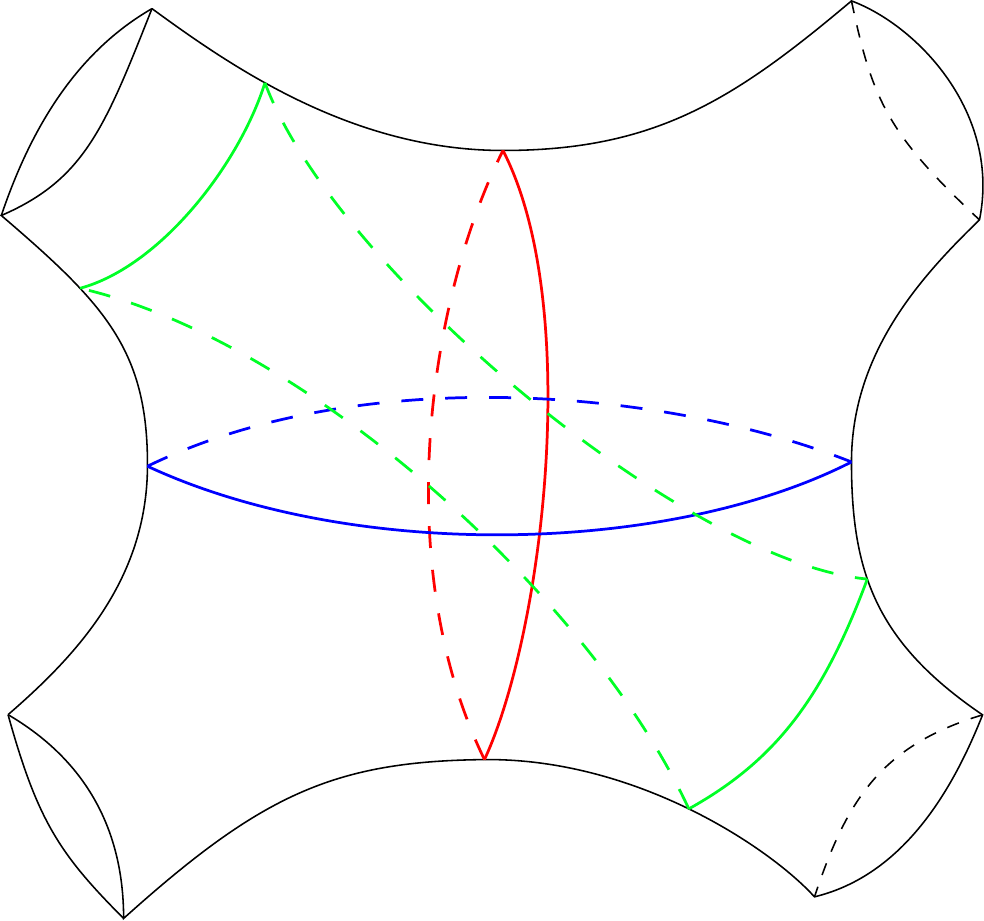}}%
    \put(0.47810842,0.10144443){\color[rgb]{1,0,0}\makebox(0,0)[lt]{\lineheight{1.25}\smash{\begin{tabular}[t]{l}$a$\end{tabular}}}}%
    \put(0.88105663,0.45571501){\color[rgb]{0,0,1}\makebox(0,0)[lt]{\lineheight{1.25}\smash{\begin{tabular}[t]{l}$b$\end{tabular}}}}%
    \put(0.15518932,0.73787733){\color[rgb]{0,1,0.14901961}\makebox(0,0)[lt]{\lineheight{1.25}\smash{\begin{tabular}[t]{l}$c$\end{tabular}}}}%
  \end{picture}%
\endgroup%

    \caption{Surface $S_0^4$ and curves.}
    \label[figure]{figure_S04}
\end{figure}

\begin{proof}
    Here $S=S_0^4$.
    Let $a$ and $b$ be the curves of \Cref{figure_S04}. Let $\Gamma_a$ be the stabilizer of $a$ and $\Gamma_b$ the stabilizer of $b$.
    Let $\Gamma_{ab}$ be their intersection. Define $G=\Gamma_a*_{\Gamma_{ab}}\Gamma_b$, and $R$ such that $\PModl{5}{S_0^4}=G/R$.
    Notice that $\Gamma_a$ and $\Gamma_b$ are finite groups. Hence, the Mayer-Vietoris sequence for $G$ is:
    $$0\lra M^G\lra M^{\Gamma_{a}}\oplus M^{\Gamma_{b}}\lra M^{\Gamma_{ab}}\lra\coh{G}{M}\lra 0$$
    where $M=\ad \Nu(S)$. Now, as $\Nu(S)$ has dimension $2$, a simple count on basis elements shows
    that $M^G$ has dimension $1$, $M^{\Gamma_{a}}$ and $M^{\Gamma_{b}}$ dimension $2$ and $M=M^{\Gamma_{ab}}$ dimension $4$.
    Hence $\coh{G}{M}$ has dimension $1$. Now the inflation restriction exact sequence for $R\subset G$ is:
    $$0\lra \coh{\PModl{5}{S_0^4}}{M}\lra\coh{G}{M}\lra\Hom{R}{M}.$$
    Let $\varphi\in\coh{G}{M}$ be non-zero. As $M^{\Gamma_{ab}}\lra\coh{G}{M}$ is surjective,
    a computation as in the proof of \Cref{mainlemma} gives that there exists $m\in M$ such that:
    $$\varphi(T_a)=T_a\cdot m-m\text{ and }\varphi(T_b)=0.$$

    Now, notice that $(T_aT_b)^{-1}$ is the Dehn twist $T_c$ on \Cref{figure_S04}. Hence $(T_aT_b)^5$ is in $R$.
    Let $\Phi(X)=1+X+X^2+X^3+X^4$. We have:
    $$\varphi((T_aT_b)^5)=\Phi(\ad (T_aT_b))(\varphi(T_aT_b))=\Phi(\ad (T_aT_b))(T_a\cdot m-m).$$
    As $(u-1)\Phi(u)=u^5-1=0$ for $u=\ad (T_aT_b)$, we have $\varphi((T_aT_b)^5)=0$
    if and only if $T_a\cdot m - m$ is in the image of $\ad (T_aT_b)- \id$.
    Let us now prove that:
    $$\dim(\mathrm{Im}(\ad T_a-\id)\cap \mathrm{Im}(\ad (T_aT_b)-\id))=1.$$
    Notice that for the usual trace quadratic form $(A,B)\mapsto \mathrm{Tr}(AB)$, we have:
    $$\mathrm{Im}(\ad T_a-\id)^\perp=\mathrm{Ker}(\ad T_a^{-1}-\id)=\mathrm{Ker}(\ad T_a-\id).$$
    Similarly for $\mathrm{Im}(\ad (T_aT_b)-\id))$.
    As scalar matrices are fixed by $\ad T_a$ and $\ad (T_aT_b)$, we have:
    $$\dim(\mathrm{Ker}(\ad T_a-\id)+\mathrm{Ker}(\ad (T_aT_b)-\id))\leq 3.$$
    As $T_a$ and $T_aT_b$ generate $\PModl{5}{S_0^4}$ and the representation is irreducible, the dimension is actually equal to $3$.
    Hence the dimension result. This shows that there exists $m\in M$ such that $T_a\cdot m-m\neq 0$
    and $\varphi((T_aT_b)^5)\neq 0$ for $\varphi$ associated to $m$ as above.

    From this we get that the map $M=M^{\Gamma_{ab}}\ra\coh{G}{M}\ra\Hom{R}{M}$ has rank at least $1$.
    Hence, as $\coh{G}{M}$ has dimension $1$, $\coh{G}{M}\ra\Hom{R}{M}$ is injective.
    From the inflation-restriction exact sequence above, we see that the space of deformations $\coh{\PModl{5}{S_0^4}}{M}$ is $0$, as desired.
\end{proof}

For the case of $S_{0,5}$, we use that the representation $\rho_{0,5}^5$ is the monodromy of the Hirzebruch surface.
This is proved in the paper of B. Deroin and J. Marché:

\begin{prop}{\cite[proposition 9]{deroinToledoInvariantsTopological2022}}\label{DeroinMarche}
    The $\SO$-quantum representation of level $5$ associated to $S_{0,5}$ and $\zeta_5=e^{\frac{2i\pi}{5}}$ is the
    holonomy of a $\mathbb{H}_{1,2}$-structure on the compact orbifold $\overline{\mathcal{M}}_{0,5}(5)$.
\end{prop}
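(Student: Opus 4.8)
The plan is to identify $\rho_{0,5}^5$ with a Deligne--Mostow hypergeometric local system on $\mathcal{M}_{0,5}$ and then invoke the complex hyperbolic uniformization of weighted configuration spaces of five points on $\mathbb{P}^1$.

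First I would pin down the representation and its target. The space $\mathcal{M}_{0,5}$ is a smooth complex surface, and the orbifold $\overline{\mathcal{M}_{0,5}}(5)$ has orbifold fundamental group $\PModl{5}{S_{0,5}}$, from its description as a moduli space of $5$-twisted stable curves. The representation $\rho_{0,5}^5$ is $3$-dimensional: with $\Lambda=\{0,2\}$ and all boundary components colored $2$, the admissible triples are $(0,0,0)$, $(0,2,2)$ and $(2,2,2)$, so a pair of pants decomposition of $S_{0,5}$ (a chain of three pairs of pants with two internal curves) has exactly three admissible colorings, whence $\dim\Nu_5(S_{0,5},2,2,2,2,2)=3$ by the basis construction of \Cref{modularfunctorbases}; and it is irreducible by \Cref{irreducibility}. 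Using the field embedding $\zeta_5=e^{2i\pi/5}$ (the Galois conjugate of the standard unitary one), I would compute the invariant Hermitian form in a fusion basis, using the $S$-- and $F$--matrices of the Fibonacci modular functor, and check that this embedding makes the form indefinite of signature $(1,2)$; hence $\rho_{0,5}^5$ takes values in $\PU{1}{2}$, whose symmetric space is $\mathbb{H}_{1,2}$.

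Next I would match this with Deligne--Mostow data. Consider five points on $\mathbb{P}^1$ with equal weights $\mu_i=2/5$, so $\sum_i\mu_i=2$; the associated hypergeometric (period) local system on $\mathcal{M}_{0,5}$ has rank $n-2=3$, monodromy in $\PU{1}{2}$, and local monodromy around each boundary divisor $\{x_i=x_j\}$ of $\overline{\mathcal{M}_{0,5}}$ of order $(1-\mu_i-\mu_j)^{-1}=5$. On the TQFT side, the local monodromy around a node is the Dehn twist $T_\gamma$, which acts on the two summands of the fusion decomposition by the scalars $r_0=1$ and $r_2=\zeta_5^{8}=\zeta_5^{3}$, so $T_\gamma^5=1$ and $T_\gamma$ has projective order $5$ --- the same local data. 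I would then argue that these two irreducible rank-$3$ local systems on $\mathcal{M}_{0,5}$ are isomorphic, either via a rigidity statement for rank-$3$ local systems on $\mathcal{M}_{0,5}$ with prescribed local monodromy around all ten boundary divisors, or via the explicit braid-group computation of Deroin--Marché comparing the Fibonacci $R$--matrix with the braiding/connection matrices of the Gauss hypergeometric equation.

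Finally, the weights $\mu_i=2/5$ satisfy Deligne--Mostow's condition INT --- indeed $(1-\mu_i-\mu_j)^{-1}=5\in\Z$, and $\Sigma$INT holds automatically by the full symmetry of the weights --- so the Deligne--Mostow theorem endows $\overline{\mathcal{M}_{0,5}}$ with a complete $\mathbb{H}_{1,2}$--orbifold structure, with cone angle $2\pi/5$ along each boundary divisor and holonomy the hypergeometric monodromy. Comparing orbifold loci --- $\Z/5$ isotropy along exactly these divisors on both sides --- identifies this orbifold with $\overline{\mathcal{M}_{0,5}}(5)$, so $\rho_{0,5}^5$ is the holonomy of an $\mathbb{H}_{1,2}$--structure on $\overline{\mathcal{M}_{0,5}}(5)$, as claimed. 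The main obstacle is the identification in the previous paragraph: the local monodromy eigenvalues and irreducibility strongly constrain the local system, but converting this into a proof requires either a precise rigidity result for these rank-$3$ local systems or the explicit matrix comparison, which is the computational core of the proof of \cite[Proposition 9]{deroinToledoInvariantsTopological2022}.
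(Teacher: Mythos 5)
This proposition is cited from Deroin--Marché \cite{deroinToledoInvariantsTopological2022} and the paper under review gives no proof of its own, so there is no internal argument to compare your sketch against. Taken on its own terms, your outline is the right approach and is, to my understanding, the one used in that reference: realize the level-$5$ $\SO$ representation of the $5$-punctured sphere at the nonstandard Galois embedding $\zeta_5=e^{2i\pi/5}$ as a rank-$3$ local system of signature $(1,2)$, match it with the Deligne--Mostow hypergeometric local system for five points of weight $\mu_i=2/5$, and invoke their uniformization theorem (INT is satisfied since $(1-\mu_i-\mu_j)^{-1}=5\in\Z$) to produce the $\mathbb{H}_{1,2}$-orbifold structure on $\overline{\mathcal{M}_{0,5}}(5)$. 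Your dimension count ($3$ admissible colorings, so $\dim\Nu_5=3$) and the twist eigenvalues $r_0=1$, $r_2=\zeta_5^{8}=\zeta_5^3$ are both correct, and the projective order of the local monodromy around each divisor $\Delta_{ij}$ is indeed $5$, matching the Deligne--Mostow side.

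You correctly flag the real work: you need to show the two irreducible rank-$3$ local systems on $\mathcal{M}_{0,5}$ actually coincide, not merely that they have matching local data, and you need to verify the signature is $(1,2)$ rather than $(0,3)$ or $(3,0)$ at the chosen embedding. Neither of these follows formally from what you have written. The signature determination requires an honest computation with the invariant Hermitian form in a fusion basis (or a Toledo-invariant computation, which is the route suggested by the title of the cited paper). The identification of local systems needs either a rigidity/uniqueness statement for irreducible local systems on $\mathcal{M}_{0,5}$ with the prescribed quasi-unipotent conjugacy classes around all ten boundary divisors, or an explicit comparison of $F$- and $R$-matrices with hypergeometric connection matrices. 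Your sketch names both options without carrying either out, which is fine for an outline but is the computational core of \cite[Proposition~9]{deroinToledoInvariantsTopological2022}. A minor slip: since INT already holds with a genuine integer, the appeal to $\Sigma$INT is unnecessary (that weakening only becomes relevant when $(1-\mu_i-\mu_j)^{-1}$ is a proper half-integer for $\mu_i=\mu_j$).
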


We now retrieve local rigidity from Weil's rigidity.

\begin{lemma}\label{S05}
    The $\SO$ TQFT in level $5$ is locally rigid on $S_0^5$.
\end{lemma}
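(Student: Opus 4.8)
The statement to prove is that the $\SO$ TQFT in level $5$ is locally rigid on $S_0^5$, i.e.\ $\coh{\PModl{5}{S_0^5}}{\ad\Nu(S_0^5)}=0$. The key input is the preceding proposition: after the embedding $\zeta_5=e^{2i\pi/5}$, the representation $\rho_{0,5}^5$ is the holonomy of a $\mathbb{H}_{1,2}$-structure on the compact Kähler orbifold $\overline{\mathcal{M}_{0,5}}(5)$, whose orbifold fundamental group is $\PModl{5}{S_{0,5}}$. So the plan is to invoke Weil's rigidity theorem for cocompact lattices: the holonomy representation of a cocompact lattice $\Gamma$ in a semisimple Lie group $H$ with no compact or $\mathrm{PSL}_2(\R)$ factors is locally rigid, i.e.\ $\coh{\Gamma}{\ad\rho}=0$. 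Here $H=\PU(1,2)$, which is simple of real rank one but not locally isomorphic to $\mathrm{PSL}_2(\R)$, so Weil's theorem applies and gives $\coh{\PModl{5}{S_{0,5}}}{\mathfrak{su}(1,2)}=0$ where $\mathfrak{su}(1,2)$ carries the adjoint action through $\rho$.

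First I would pass from the Lie-algebra coefficients $\mathfrak{su}(1,2)$ to the full $\mathfrak{sl}_3(\C)$, or rather to $\ad\Nu(S_0^5)=\End{\Nu(S_0^5)}$. By \Cref{reductiontounitary} (applied at the level of cohomology, or really just its proof), since $\mathfrak{su}(1,2)\oplus i\,\mathfrak{su}(1,2)=\mathfrak{sl}_3(\C)$ as $\Gamma$-modules with $i\,\mathfrak{su}(1,2)\cong\mathfrak{su}(1,2)$, Weil's vanishing gives $\coh{\PModl{5}{S_{0,5}}}{\mathfrak{sl}_3(\C)}=0$; adding a trivial summand (killed because the abelianization is finite, as in the proof of the first Proposition of \Cref{appendixrigidity}) yields $\coh{\PModl{5}{S_{0,5}}}{\End{\Nu(S_{0,5})}}=0$. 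Since $\Nu$ has dimension $1$ on each coloring of $S_0^3$ and the relevant dimension count gives $\dim\Nu(S_0^5)=3$ (all boundaries colored $2$), the representation is indeed into $\PGL_3$, matching $\PU(1,2)$.

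Next I would pass from $\PModl{5}{S_{0,5}}$ (punctures) back to $\PModl{5}{S_0^5}$ (boundary components). These differ by the kernel of $\tPModl{5}{S_0^5}\to\PModl{5}{S_{0,5}}$, generated by boundary Dehn twists, which becomes a finite group after quotienting by fifth powers and the center. Using the Inflation--Restriction sequence (\Cref{inflationrestriction}) exactly as in \Cref{removezeros} and in the proof of \Cref{mainlemma}, the $\mathrm{Hom}$ term against a finite group vanishes, so $\coh{\PModl{5}{S_0^5}}{\ad\Nu(S_0^5)}$ injects into $\coh{\PModl{5}{S_{0,5}}}{\ad\Nu(S_{0,5})}=0$. (One should check the module identification $\ad\Nu(S_0^5)\cong\ad\Nu(S_{0,5})$ is compatible with the group map, which is immediate since the module is the same space with the same action factoring through the puncture group.)

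\textbf{The main obstacle.} The calculations are all routine; the delicate point is making sure Weil's rigidity theorem genuinely applies, i.e.\ that $\overline{\mathcal{M}_{0,5}}(5)$ really is a \emph{compact} complex-hyperbolic orbifold so that $\PModl{5}{S_{0,5}}$ is (up to finite kernel and finite-index subtleties) a cocompact lattice in $\PU(1,2)$, and that $\PU(1,2)$ has no factor locally isomorphic to $\mathrm{PSL}_2(\R)$ — which is fine since it is simple of complex dimension $8$. One must also be slightly careful that Weil's theorem as classically stated is for torsion-free cocompact lattices, whereas here we have an orbifold group: this is handled by passing to a torsion-free finite-index subgroup $\Gamma'$, applying Weil there, and then using that $\coh{\Gamma}{M}$ injects into $\coh{\Gamma'}{M}$ because $[\Gamma:\Gamma']<\infty$ and $M$ is a $\C$-vector space (a transfer argument, or again Inflation--Restriction against the finite quotient). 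This last point is the only place where one must be attentive rather than merely bookkeeping.
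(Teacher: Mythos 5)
Your proof takes essentially the same approach as the paper: cite the Deroin--Marché result identifying $\rho_{0,5}^5$ with the holonomy of a complex-hyperbolic structure on the compact orbifold $\overline{\mathcal{M}_{0,5}}(5)$, invoke Weil's rigidity for a cocompact lattice in $\PU{1}{2}$, and upgrade from $\mathfrak{su}(1,2)$ coefficients to $\ad\Nu(S_0^5)$ via \Cref{reductiontounitary}. The paper's proof is terser and leaves implicit both the passage from $\PModl{5}{S_{0,5}}$ (punctures) to $\PModl{5}{S_0^5}$ (boundary) and the orbifold/torsion caveat in applying Weil's theorem; you spell those out, which is a reasonable level of care but not a different route.
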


\begin{proof}
    A consequence of \Cref{DeroinMarche} is that the representation is given by the inclusion of a cocompact lattice $\Gamma\subset \PU{1}{2}$,
    and an isomorphism $\Gamma\simeq\PModl{5}{S_{0,5}}$.
    From the proof of local rigidity given by A. Weil \cite{weilRemarksCohomologyGroups1964},
    we have that $\coh{\Gamma}{\mathfrak{pu}(1,2)}=0$, where $\mathfrak{pu}(1,2)$ is the Lie algebra of $\PU{1}{2}$.
    Hence the representation is locally rigid in $\PU{1}{2}$.
    We conclude with \Cref{reductiontounitary}.
\end{proof}

\begin{lemma}\label{S06}
    The $\SO$ TQFT in level $5$ is locally rigid on $S_0^6$.
\end{lemma}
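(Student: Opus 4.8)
The idea is to run the same "amalgamated sum plus inflation-restriction" machine as in Lemmas \ref{mainlemma} and \ref{S04}, but now on the surface $S_0^6$, where the pair-of-pants graph has a shape that allows a cut into two copies of $S_0^4$. Concretely, I would pick a separating simple closed curve $c$ cutting $S_0^6$ into $S' \sqcup S''$ with $S', S''\cong S_0^4$, and on each side choose an interior curve $a_1\subset S'$, $a_2\subset S''$ so that $\PMod{S_0^6}$ is generated by the stabilizers of $a_1$ and $a_2$ (this is a standard fact about generators of genus-$0$ mapping class groups, cf.\ \cite[4.4, 9.3]{farbPrimerMappingClass2011}). The obstruction to applying \Cref{mainlemma} verbatim is that $\Nu$ is \emph{not} irreducible on $S_0^4$ (the module there is $2$-dimensional but the representation need not be irreducible on every coloring), and the irreducibility hypothesis on the components of $S_c$ fails. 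So, as announced in the section preamble, I would redo the proof of \Cref{mainlemma} by hand in this special case, replacing the appeal to irreducibility on $S_c$ by an explicit analysis.

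First I would set up the diagram exactly as in step (1) of the proof of \Cref{mainlemma}: with $\Gamma_i$ the stabilizer of $a_i$ in $\PModl{5}{S_0^6}$, $\Gamma_{12}$ their intersection, $G=\Gamma_1*_{\Gamma_{12}}\Gamma_2$, and $R$ the kernel of $G\twoheadrightarrow \PModl{5}{S_0^6}$, the Mayer--Vietoris and inflation-restriction sequences reduce the vanishing of $\coh{\PModl{5}{S_0^6}}{\ad\Nu(S_0^6)}$ to showing that the connecting map $\delta: M^{\Gamma_{12}}\to \mathrm{H}^1(G,M)$ composed with $f:\mathrm{H}^1(G,M)\to \Hom{R}{M}$ is injective on the image of $\delta$, using again that $\langle T_{a_i}\rangle$ is finite so $\mathrm{H}^1(\Gamma_i,M)=\mathrm{H}^1(\PModl{5}{S_{0^6}_{a_i}},M)$ and that rigidity on $S_{0^6}_{a_i}=S_0^4\sqcup S_0^3$ (hence on $S_0^5$ after removing the $0$-colored... no — here both pieces get colored, so on $S_0^5$) is already known from \Cref{S04} and \Cref{S05} together with \Cref{bettermainlemma}-type splitting. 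Actually the cleaner route is: after cutting along $a_i$, $S_0^6$ becomes $S_0^5\sqcup S_0^3$ — wait, cutting a genus-$0$ surface with $6$ boundary components along one interior curve gives $S_0^4\sqcup S_0^4$ or $S_0^3\sqcup S_0^5$ depending on the curve; I would choose $a_i$ so that $S_{0^6}{}_{a_i}\cong S_0^5\sqcup S_0^3$ (or handle whichever splitting the generating set forces), so that local rigidity of the cut pieces follows from \Cref{S05} (and triviality of $S_0^3$), via the cross-terms vanishing of \Cref{crossterms}.

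The core computation is identical to step (2) of \Cref{mainlemma}: for $g_1\in\Gamma_1^c$, $g_2\in\Gamma_2^c$ (mapping class groups of the two sides of $c$, which commute in $\PModl{5}{S_0^6}$), one gets $f(\delta(m))(g_1g_2g_1^{-1}g_2^{-1}) = -[g_1,[g_2,m]]g_1^{-1}g_2^{-1}$, so $m\in\ker(f\circ\delta)$ forces $[g_1,[g_2,m]]=0$ in $M$. Projecting to $M' = \End{\Nu(S',\und{\lambda_c})}\otimes\End{\Nu(S'',\und{\lambda_c})}$ for a well-chosen $\und{\lambda_c}$ on $c$, and writing $m=\sum_{\mu_1,\mu_2}c_{\mu_1\mu_2}\id_{\Nu(S_{a_1,a_2},\mu_1,\mu_2)}$, I want to apply \Cref{linearalgebra}. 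The point where irreducibility on $S'$, $S''$ was used was to identify the image of $\C[\Gamma_i^c]$ in $M'$ with a full matrix algebra factor. Here, since $\dim\Nu(S_0^4,\und{\lambda_c})\le 2$, the image is either all of $\End{\Nu(S',\und{\lambda_c})}$ (if the representation is irreducible on that coloring, e.g.\ the all-$2$ coloring, which is irreducible by \Cref{irreducibility} as $S_0^4$... in fact genus $0$ is not covered, so I'd verify it directly, or pick $\und{\lambda_c}$ making the piece $S_0^5$ where \Cref{irreducibility}-style results or \Cref{S05} apply) or has a smaller image — in the worst case the $2$-dimensional representation could be reducible, but even then, commutation with the full $\End$ of the other tensor factor plus the decomposition of $m$ through the $\Gamma_{12}$-isotypic pieces still pins down the $c_{\mu_1\mu_2}$ up to $a_{\mu_1}+b_{\mu_2}$ by the same bracket identity restricted to the off-diagonal $\Hom$ spaces that \emph{are} realized.

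The hard part will be exactly this last point: controlling the image of $\C[\Gamma_i^c]$ in $\End{\Nu(S_0^4,\und{\lambda_c})}$ when the hypotheses of \Cref{mainlemma} are not available, and checking the non-vanishing condition $\Nu(S_{a_1,a_2},\mu_1,\mu_2)\neq 0 \Rightarrow \Nu(S_{a_1,a_2,c},\mu_1,\mu_2,\und{\lambda_c})\neq 0$ for the specific genus-$0$, level-$5$ admissibility constraints of \Cref{colourconditions} (with only colors $0$ and $2$ available). I expect that choosing $\und{\lambda_c}$ to be the all-$2$ coloring, enumerating the handful of admissible colorings of the relevant graphs, and using that at level $5$ every $2$-dimensional piece is either irreducible or has finite image (so its group algebra image is still large enough, or the finite-image case is handled by \Cref{centerkills}/finiteness directly), will close the gap; the rest is the verbatim bracket computation and an application of \Cref{linearalgebra} and \Cref{crossterms} to conclude $\coh{\PModl{5}{S_0^6}}{\ad\Nu(S_0^6)}=0$.
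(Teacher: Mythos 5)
You misdiagnose the obstruction. In level $5$, irreducibility is \emph{not} the issue: by \Cref{irreducibility} the $\SO$ quantum representations of $\PMod{S_g^n}$ are irreducible for \emph{all} $g,n\geq 0$ (including genus $0$) when $\ell$ is prime, so the hypothesis ``$\Nu$ is irreducible on the components of $S_c$ and $S_{a_1,a_2}$'' of \Cref{mainlemma} is automatically satisfied. The entire second half of your proposal, devoted to ``controlling the image of $\C[\Gamma_i^c]$ in $\End{\Nu(S_0^4,\und{\lambda_c})}$'' under a hypothetical failure of irreducibility, is aimed at a problem that does not exist.

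The actual obstruction, which the paper announces explicitly at the start of its proof, is that $\PMod{S_0^6}$ is \emph{not} generated by the stabilizers of two disjoint curves $a_1,a_2$ lying on opposite sides of a separating curve $c$. Your claim that it is (citing \cite[4.4, 9.3]{farbPrimerMappingClass2011}) is unjustified for this genus-$0$ configuration; those references give generating sets by Dehn twists, but the twists one obtains from $\Gamma_1\cup\Gamma_2$ do not account for all of $\PMod{S_0^6}$. Consequently, the amalgam $G=\Gamma_1*_{\Gamma_{12}}\Gamma_2$ does not surject onto $\PModl{5}{S_0^6}$, and the inflation--restriction step at the end of your argument has no content. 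The paper's fix is structurally different: it takes \emph{three} curves $a_1,a_2,a_3$ forming a pair-of-pants decomposition, runs the \Cref{mainlemma} argument twice (on the subgroups $L_{12}$ and $L_{13}$ generated by pairs of stabilizers, obtaining $\coh{L_{12}}{M}=\coh{L_{13}}{M}=0$), and then applies a \emph{second} Mayer--Vietoris sequence to the amalgam $L_{12}*_{\langle\Gamma_1,\Gamma_{23}\rangle}L_{13}$, which does surject onto $\PModl{5}{S_0^6}$. The concluding dimension count on $M^{L_{12}}$, $M^{L_{13}}$ and $M^{\langle\Gamma_1,\Gamma_{23}\rangle}$, using irreducibility and the admissibility constraints of \Cref{colourconditions}, is the part your proposal never reaches.
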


\begin{figure}
    \def\svgwidth{0.8\linewidth}
    %% Creator: Inkscape 1.2.2 (b0a8486, 2022-12-01), www.inkscape.org
%% PDF/EPS/PS + LaTeX output extension by Johan Engelen, 2010
%% Accompanies image file 'S06.pdf' (pdf, eps, ps)
%%
%% To include the image in your LaTeX document, write
%%   \input{<filename>.pdf_tex}
%%  instead of
%%   \includegraphics{<filename>.pdf}
%% To scale the image, write
%%   \def\svgwidth{<desired width>}
%%   \input{<filename>.pdf_tex}
%%  instead of
%%   \includegraphics[width=<desired width>]{<filename>.pdf}
%%
%% Images with a different path to the parent latex file can
%% be accessed with the `import' package (which may need to be
%% installed) using
%%   \usepackage{import}
%% in the preamble, and then including the image with
%%   \import{<path to file>}{<filename>.pdf_tex}
%% Alternatively, one can specify
%%   \graphicspath{{<path to file>/}}
%% 
%% For more information, please see info/svg-inkscape on CTAN:
%%   http://tug.ctan.org/tex-archive/info/svg-inkscape
%%
\begingroup%
  \makeatletter%
  \providecommand\color[2][]{%
    \errmessage{(Inkscape) Color is used for the text in Inkscape, but the package 'color.sty' is not loaded}%
    \renewcommand\color[2][]{}%
  }%
  \providecommand\transparent[1]{%
    \errmessage{(Inkscape) Transparency is used (non-zero) for the text in Inkscape, but the package 'transparent.sty' is not loaded}%
    \renewcommand\transparent[1]{}%
  }%
  \providecommand\rotatebox[2]{#2}%
  \newcommand*\fsize{\dimexpr\f@size pt\relax}%
  \newcommand*\lineheight[1]{\fontsize{\fsize}{#1\fsize}\selectfont}%
  \ifx\svgwidth\undefined%
    \setlength{\unitlength}{488.21230911bp}%
    \ifx\svgscale\undefined%
      \relax%
    \else%
      \setlength{\unitlength}{\unitlength * \real{\svgscale}}%
    \fi%
  \else%
    \setlength{\unitlength}{\svgwidth}%
  \fi%
  \global\let\svgwidth\undefined%
  \global\let\svgscale\undefined%
  \makeatother%
  \begin{picture}(1,0.53692585)%
    \lineheight{1}%
    \setlength\tabcolsep{0pt}%
    \put(0,0){\includegraphics[width=\unitlength,page=1]{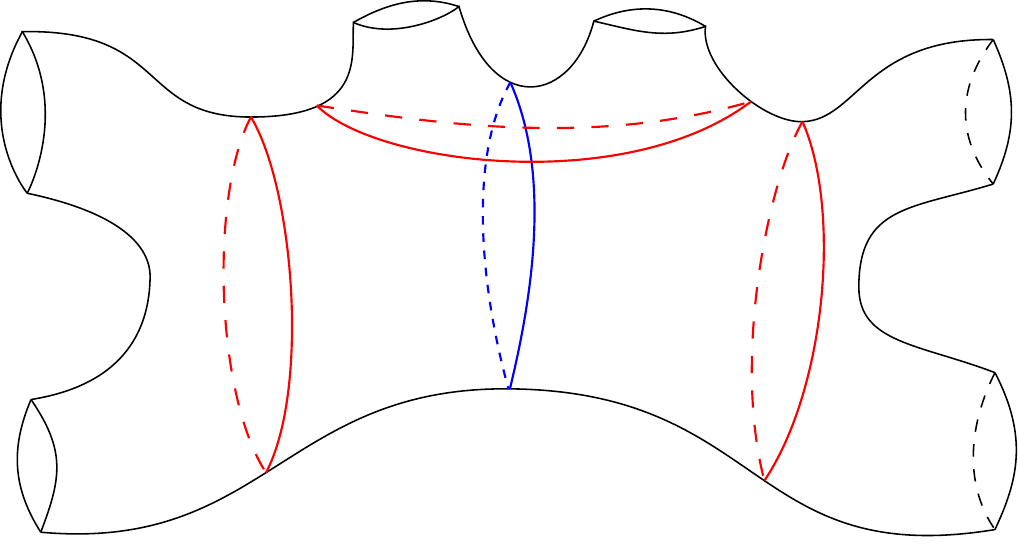}}%
    \put(0.24672689,0.01755443){\color[rgb]{1,0,0}\makebox(0,0)[lt]{\lineheight{1.25}\smash{\begin{tabular}[t]{l}$a_1$\end{tabular}}}}%
    \put(0.7388621,0.00722104){\color[rgb]{1,0,0}\makebox(0,0)[lt]{\lineheight{1.25}\smash{\begin{tabular}[t]{l}$a_2$\end{tabular}}}}%
    \put(0.62648482,0.3469364){\color[rgb]{1,0,0}\makebox(0,0)[lt]{\lineheight{1.25}\smash{\begin{tabular}[t]{l}$a_3$\end{tabular}}}}%
    \put(0.49085697,0.11055651){\color[rgb]{0,0,1}\makebox(0,0)[lt]{\lineheight{1.25}\smash{\begin{tabular}[t]{l}$c$\end{tabular}}}}%
  \end{picture}%
\endgroup%

    \caption{Surface $S_0^6$ and curves.}
    \label[figure]{figure_S06}
\end{figure}

\begin{proof}
    Here $S=S_0^6$.
    We will need a slight variation of \Cref{mainlemma} where the mapping class group is not generated
    by the stabilizer of $a_1$ and the stabilizer of $a_2$.
    The proof is exactly the same if we replace the full mapping class group by the subgroup generated by the stabilizers.

    Let $a_1$, $a_2$, $a_3$, and $c$ be the {\sccs} on \Cref{figure_S06}.

    Let $\Gamma_i$ be the stabilizer of $a_i$ for $i=1,2,3$. Let $\Gamma_{ij}$ be the stabilizer of $a_i$ and $a_j$ for $i,j\in\{1,2,3\}$.
    Let $G_{ij}=\Gamma_i*_{\Gamma_{ij}}\Gamma_j$ for $i,j\in\{1,2,3\}$. Let $L_{ij}$ be the image of $G_{ij}\ra \PModl{\ell}{S_0^6}$.

    The curves $a_1$, $a_2$ and $c$ verify all the hypotheses of \Cref{mainlemma} for $\lambda_{c}=2$
    if we replace $\PModl{\ell}{S_0^6}$ by $L_{12}$. Hence, applying the proof of the lemma, we get:
    $$\coh{L_{12}}{\ad\Nu(S)}=0.$$
    By symmetry, the same result holds for $L_{13}$.

    Let $G=L_{12}*_{\langle \Gamma_1,\Gamma_{23}\rangle}L_{13}$. Now $\PModl{\ell}{S_0^6}$ is a quotient of $G$.
    Moreover, the Mayer-Vietoris sequence is:
    $$0\lra M^G\lra M^{L_{12}}\oplus M^{L_{13}}\lra M^{\langle \Gamma_1,\Gamma_{23}\rangle}\lra\coh{G}{M}\lra0$$
    where $M=\ad \Nu(S)$. Clearly, by \Cref{irreducibility}, $M^G$ has dimension $1$.

    Now, as all $4$ colorings $(0,0)$, $(0,2)$, $(2,0)$ and $(2,2)$ of $(a_1,a_2)$ are admissible,
    $\Nu(S)$ is irreducible as a representation of $G_{12}=\Gamma_1*_{\Gamma_{12}}\Gamma_2$. The same holds for $G_{13}$. 
    Thus, $M^{L_{12}}$ and $M^{L_{13}}$ have dimension $1$.

    To conclude, we must prove that $M^{\langle \Gamma_1,\Gamma_{23}\rangle}$ also has dimension $1$.
    Let $\varphi$ be an element of $M$ stabilized by $\Gamma_1$ and $\Gamma_{23}$.
    Then, $\varphi$ is diagonal in the basis associated to the pair of pants decomposition formed by the $a_i$.
    Let $\lambda(c_1,c_2,c_3)$ be its diagonal coefficient on the basis element where $a_i$ is colored with $c_i$.

    As $\Gamma_{23}$ fixes $\varphi$, by \Cref{irreducibility}, $\lambda(c_1,c_2,c_3)$ only depends on $(c_2,c_3)$.
    But the colorings $(2,2,2)$ and $(0,2,2)$ are admissible, so $\lambda(0,2,2)=\lambda(2,2,2)$.
    As $\Gamma_1$ fixes $\varphi$, by \Cref{irreducibility}, $\lambda(c_1,c_2,c_3)$ only depends on $c_1$.
    So $\varphi$ is scalar. Hence the result.
\end{proof}

\begin{lemma}\label{S0n}
    For $n\geq 7$, the $\SO$ TQFT in level $5$ is locally rigid on $S_0^n$.
\end{lemma}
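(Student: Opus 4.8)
The plan is to run an induction on $n \geq 7$, using the two previously established base cases $S_0^5$ (\Cref{S05}) and $S_0^6$ (\Cref{S06}), together with the machinery of \Cref{mainlemma} and \Cref{bettermainlemma}. For $n \geq 7$, choose a pair of pants decomposition of $S_0^n$ and pick two disjoint simple closed curves $a_1$ and $a_2$ such that cutting along $a_i$ produces surfaces of genus $0$ with fewer boundary components --- concretely, $a_1$ separates off a four-holed sphere carrying two of the distinguished boundary components, and $a_2$ does the same on the other ``end''. Then $S_{a_1}$ and $S_{a_2}$ are each disjoint unions of planar surfaces with at most $n-1$ boundary components (the summands being an $S_0^4$ and an $S_0^{n-1}$, or similar), all of which are handled by the induction hypothesis together with \Cref{initializationeasy}, \Cref{S04}, \Cref{S05}, \Cref{S06}. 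One must also exhibit a separating curve $c$ so that $a_1, a_2 \subset$ the two components of $S_c$, with $S_c$ again a union of planar pieces on which $\Nu$ is irreducible by \Cref{irreducibility}.

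The main point to check is the key hypothesis of \Cref{mainlemma}: that there is a coloring $\underline{\lambda_c}$ of $c$ such that $\Nu(S_{a_1,a_2},\mu_1,\mu_2) \neq 0$ implies $\Nu(S_{a_1,a_2,c},\mu_1,\mu_2,\underline{\lambda_c}) \neq 0$ for all $\mu_1,\mu_2 \in \Lambda = \{0,2\}$. As in the proof of \Cref{induction}, I would take $\lambda_c = 0$ on as many components of $c$ as possible and use property \textbf{(I)}, in the form "for every $\lambda$ there is $\mu$ with $\Nu(S_0^3,\lambda,\mu,\mu) \neq 0$", to propagate an admissible coloring of the graph of the pair of pants decomposition; one must also invoke \Cref{colourconditions} to verify admissibility of the relevant small triples, noting that in level $5$ the only admissible triples built from $\{0,2\}$ are $(0,0,0)$, $(0,2,2)$, $(2,0,2)$, $(2,2,0)$ and $(2,2,2)$. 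With this hypothesis verified and $\Nu$ irreducible on all components of $S_c$ and $S_{a_1,a_2}$, \Cref{mainlemma} applies; since $\Nu$ also satisfies \textbf{(III)} (\Cref{SOmodularfunctor}), \Cref{bettermainlemma} gives an injection
$$\coh{\PModl{5}{S_0^n}}{\ad\Nu(S_0^n)} \lra \bigoplus_{i,\mu_i}\coh{\PModl{5}{S_{a_i}}}{\ad\Nu(S_{a_i},\mu_i)}.$$

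It remains to observe that $\PMod{S_0^n}$ is generated by the stabilizers of $a_1$ and $a_2$ (a standard fact about generating sets of planar mapping class groups, cf. \cite[4.4 and 9.3]{farbPrimerMappingClass2011}), and that the right-hand side above vanishes: each $S_{a_i}$ is a disjoint union of surfaces to which the induction hypothesis (or \Cref{S04}, \Cref{S05}, \Cref{S06}, \Cref{initializationeasy}) applies, and by the Künneth corollary local rigidity is checked component by component. Hence $\coh{\PModl{5}{S_0^n}}{\ad\Nu(S_0^n)}=0$, completing the induction. The main obstacle I anticipate is purely bookkeeping: arranging $a_1$, $a_2$, $c$ so that \emph{every} piece appearing after cutting genuinely has strictly smaller complexity and lies in the already-known range (no stray $S_0^4$ with a bad coloring, no component on which irreducibility could fail), and confirming that the colors $\mu_1,\mu_2$ forced on the new boundary components are always compatible with $\lambda_c=0$ via the admissibility rules of \Cref{colourconditions}. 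If the naive choice of $c$ fails the key hypothesis --- as happened in \Cref{S06}, where $\lambda_c=2$ was needed and the full mapping class group had to be replaced by the subgroup generated by the stabilizers --- I would fall back on the same variant of \Cref{mainlemma} used there.
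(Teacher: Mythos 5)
Your proposal, as stated, has a genuine gap: the claim that $\PMod{S_0^n}$ is generated by the stabilizers of two \emph{disjoint} simple closed curves $a_1$, $a_2$ of the type you describe is not a standard fact and is not what the paper uses. Indeed, the paper explicitly flags this failure: in the proof of \Cref{S06} it introduces ``a slight variation of \Cref{mainlemma} where the mapping class group is \emph{not} generated by the stabilizer of $a_1$ and the stabilizer of $a_2$'', replacing the full group by the subgroup $L_{12}$ they generate. This is not a fallback for a bad choice of $\lambda_c$; it is a structural obstruction in genus $0$, and your argument as written silently skips it. With your setup you can at best conclude $\coh{L_{12}}{\ad\Nu(S)}=0$, which is weaker than the claim.

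The paper's actual proof of \Cref{S0n} therefore proceeds in two stages with \emph{three} curves $a_1,a_2,a_3$ plus $c$. First, applying the $L_{12}$-variant of \Cref{mainlemma} (with $\lambda_c=2$, not $0$ --- you correctly suspect $0$ fails, and indeed $(0,0,2)$ is inadmissible in level $5$), one gets $\coh{L_{12}}{M}=\coh{L_{13}}{M}=0$. Then a second Mayer--Vietoris argument, for the amalgam $G=L_{12}*_{\langle\Gamma_1,\Gamma_{23}\rangle}L_{13}$ which \emph{does} surject onto $\PModl{5}{S_0^n}$, together with a dimension count showing that $M^G$, $M^{L_{12}}$, $M^{L_{13}}$ and $M^{\langle\Gamma_1,\Gamma_{23}\rangle}$ all have dimension $1$ (the last using that all $8$ colorings of $(a_1,a_2,a_3)$ appear, so $\langle\Gamma_1,\Gamma_{23}\rangle$ still acts irreducibly), yields $\coh{G}{M}=0$, hence $\coh{\PModl{5}{S_0^n}}{M}=0$. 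Your proposal would need this extra layer: a single application of \Cref{bettermainlemma} does not suffice in genus $0$, and no induction on $n$ can fix the fact that two stabilizers don't generate.
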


\begin{figure}
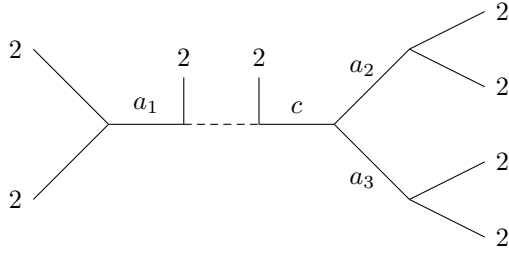

    \ctikzfig{graph_for_0n}
    \caption{Graph of a pair of pants decomposition of $S_0^n$ for $n\geq 7$.}
    \label[figure]{graph_for_0n}
\end{figure}

\begin{proof}
    Here $S=S_0^n$.
    The proof is very similar to the one given for $S_0^6$.
    We consider the pair of pants decomposition associated to the graph of \Cref{graph_for_0n}.
    Let $a_1$, $a_2$, $a_3$, and $c$ be the {\sccs} associated to the edges marked on the figure.

    Let $\Gamma_i$, $\Gamma_{ij}$, $L_{ij}$ and $G$ be as in the proof of \Cref{S06}.

    As for $S_0^6$, the curves $a_1$, $a_2$ and $c$ verify all the hypotheses of \Cref{mainlemma} for $\lambda_{c}=2$
    if we replace $\PModl{\ell}{S_0^n}$ by $L_{12}$. Hence, we get:
    $$\coh{L_{12}}{\ad\Nu(S)}=\coh{L_{13}}{\ad\Nu(S)}=0.$$

    The Mayer-Vietoris sequence is:
    $$0\lra M^G\lra M^{L_{12}}\oplus M^{L_{13}}\lra M^{\langle \Gamma_1,\Gamma_{23}\rangle}\lra\coh{G}{M}\lra0$$
    where $M=\ad \Nu(S)$. As for $S_0^6$, $M^G$, $M^{L_{12}}$ and $M^{L_{13}}$ have dimension $1$.

    Now as all $8$ possible colorings of $(a_1,a_2,a_3)$ appear in the basis,
    $\Nu(S)$ is irreducible as a representation of $\langle \Gamma_1,\Gamma_{23}\rangle$.
    Thus, $M^{\langle \Gamma_1,\Gamma_{23}\rangle}$ has dimension $1$.

    The result follows by dimension count on the exact sequence.
\end{proof}

We now have all the base cases necessary to prove the theorem.

\begin{proof}[Proof of \Cref{rigidityinlowlevel}.]
    The second statement of the theorem follows from the first and \Cref{removingthel}.
    
    In this proof, for any of the figures we will use the following notations.
    For $i\in\{1,2\}$, $\Gamma_i=\PModl{\ell}{S,[a_i]}$ will denote the stabilizer of $a_i$.
    $\Gamma_{12}=\PModl{\ell}{S,[a_1],[a_2]}$ is their intersection.
    $G=\Gamma_1*_{\Gamma_{12}}\Gamma_2$ will be their amalgamated sum.

    \textbf{(1) $\mathbf{S_1^2}$.}
    \begin{figure}
        \ctikzfig{graph_for_12}
        \caption{Graph of a pair of pants decomposition of $S_1^2$.}
        \label[figure]{graph_for_12}
    \end{figure}
    Here $S = S_1^2$. We use \Cref{graph_for_12} as a reference.
    By \Cref{initializationeasy} and \Cref{S04},
    $\Nu$ is locally rigid on the components of $S_{a_1}\simeq S_0^3\sqcup S_1^1$ and on $S_{a_2}\simeq S_0^4$
    for any coloring of the boundary components. Hence, by \Cref{crossterms}:
    $$\coh{\Gamma_1}{\ad\Nu(S)}=0\text{ and }\coh{\Gamma_2}{\ad\Nu(S)}=0.$$
    Now, according to \Cref{mayervietoris}, we have an exact sequence:
    $$0\lra M^G\lra M^{\Gamma_1}\oplus M^{\Gamma_2}\lra M^{\Gamma_{12}}\lra\coh{G}{M}\lra0.$$
    Here $M=\ad\Nu(S)$. Now, a quick count on the basis using \Cref{irreducibility}
    shows that $M^{\Gamma_1}$ and $M^{\Gamma_2}$ have dimension $2$,
    whereas $M^{\Gamma_{12}}$ has dimension $3$ and $M^G$ has dimension $1$. Hence $\coh{G}{M}=0$, and by \Cref{inflationrestriction},
    $\coh{\Gamma}{M}=0$, as required.

    \textbf{(2) $\mathbf{S_1^3}$.}
    \begin{figure}
        \ctikzfig{graph_for_13}
        \caption{Graph of a pair of pants decomposition of $S_1^3$.}
        \label[figure]{graph_for_13}
    \end{figure}
    Here $S = S_1^3$. We use \Cref{graph_for_13} as a reference.
    By \textbf{(1)}, \Cref{initializationeasy} and \Cref{S05},
    $\Nu$ is locally rigid on the components of $S_{a_1}\simeq S_0^3\sqcup S_1^2$ and on $S_{a_2}\simeq S_0^5$
    for any coloring of the boundary components.

    As above, we get the exact sequence:
    $$0\lra M^G\lra M^{\Gamma_1}\oplus M^{\Gamma_2}\lra M^{\Gamma_{12}}\lra\coh{G}{M}\lra0.$$
    And again $M^{\Gamma_1}$ and $M^{\Gamma_2}$ have dimension $2$, $M^{\Gamma_{12}}$ has dimension $3$ and $M^G$ has dimension $1$.
    So $\coh{\Gamma}{M}=0$.

    \textbf{(3) $\mathbf{S_1^n}$ with $\mathbf{n\geq 4}$.}
    Fix $n\geq 4$.
    \begin{figure}
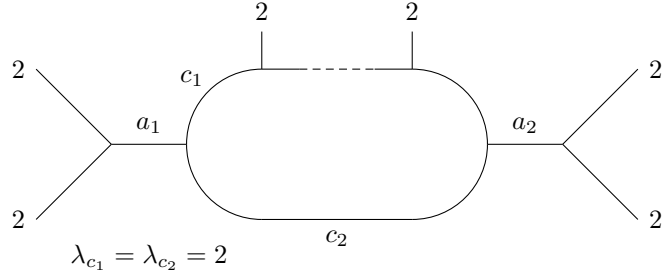

        \ctikzfig{graph_for_1n}
        \caption{Graph of a pair of pants decomposition of $S_1^n$.}
        \label[figure]{graph_for_1n}
    \end{figure}
    Here $S = S_1^n$. We use \Cref{graph_for_1n} as a reference.
    We apply \Cref{mainlemma} with $a_1$, $a_2$ and $c$ as on the figure.
    One checks that the hypotheses of the lemma are verified when the components of $c$ are colored with $2$.
    The only non-trivial part is the irreducibility. But as we are in level $5$, all the representations are irreducible,
    see \Cref{irreducibility}.
    
    As $S_{a_1}\simeq S_{a_2}\simeq S_0^3\sqcup S_1^{n-1}$, we are reduced to the case of $S_1^{n-1}$.
    We conclude by \textbf{(2)} and induction on $n$.

    \textbf{(4) $\mathbf{S_2^1}$.}
    \begin{figure}
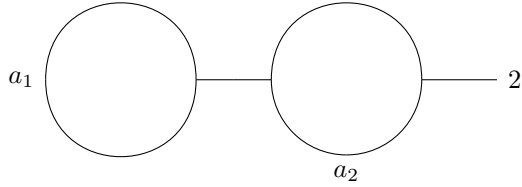

        \ctikzfig{graph_for_21}
        \caption{Graph of a pair of pants decomposition of $S_2^1$.}
        \label[figure]{graph_for_21}
    \end{figure}
    Here $S = S_2^1$. We use \Cref{graph_for_21} as a reference.
    We proceed as in \textbf{(1)} and \textbf{(2)}. Using \Cref{initializationeasy} and \textbf{(2)}, we see that $\Nu$
    is locally rigid on $S_{a_1}\simeq S_{a_2}\simeq S_1^3$ for any coloring of the $a_i$.
    Hence, we get the exact sequence:
    $$0\lra M^G\lra M^{\Gamma_1}\oplus M^{\Gamma_2}\lra M^{\Gamma_{12}}\lra\coh{G}{M}\lra0.$$
    And again $M^{\Gamma_1}$ and $M^{\Gamma_2}$ have dimension $2$, $M^{\Gamma_{12}}$ has dimension $3$ and $M^G$ dimension $1$.
    So $\coh{\Gamma}{M}=0$.

    \textbf{(5) $\mathbf{S_2^n}$ with $\mathbf{n=0}$ or $\mathbf{n\geq 2}$.}
    Fix $n\geq 2$. The cas $n=0$ is treated similarly.
    \begin{figure}
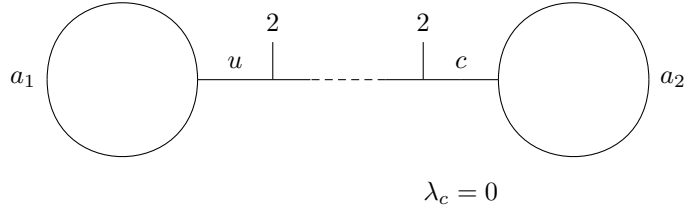

        \ctikzfig{graph_for_2n}
        \caption{Graph of a pair of pants decomposition of $S_2^n$, $n\geq 2$.}
        \label[figure]{graph_for_2n}
    \end{figure}
    Here $S = S_2^n$. We use \Cref{graph_for_2n} as a reference in $\textbf{(5)}$.
    We easily check that $a_1$, $a_2$ and $c$ as on the figure verify the hypotheses of \Cref{mainlemma} for $\lambda_c=0$.
    As in \textbf{(3)}, we use \Cref{irreducibility}.
    
    As $S_{a_1}\simeq S_{a_1}\simeq S_1^{n+2}$, \textbf{(3)} and \Cref{bettermainlemma} imply that $\Nu$ is locally rigid on $S$.

    \textbf{(6) $\mathbf{S_g^n}$ with $\mathbf{g\geq 3}$.}
    Choose $g\geq 3$ and $n\geq 0$. We proceed by induction on $g$.
    Here $S=S_g^n$ and we use \Cref{graph_for_induction_on_g} as a reference.
    The hypotheses of \Cref{mainlemma} can be checked exactly as in the proof of \Cref{induction}.
    As $S_{a_1}\simeq S_{a_1}\simeq S_{g-1}^{n+2}$, the induction hypothesis,
    \textbf{(4)}, \textbf{(5)} and \Cref{bettermainlemma} imply that $\Nu$ is locally rigid on $S$.
\end{proof}

\appendix

\section{Irreducibility of quantum representations}\label{appendixirreducibility}

In this appendix, we prove irreducibility of quantum $\SO$ representations at prime levels.
We deduce it as a corollary of a result from
the paper \cite{koberdaIrreducibilityQuantumRepresentations2018} of T. Koberda and R. Santharoubane.
In this section, we denote $\Nu_\ell$ the TQFT of level $\ell$ constructed in \cite{blanchetTopologicalQuantumField1995}.

Here is the result we need from the paper of T. Koberda and R. Santharoubane, stated in the case of a $\SO$ TQFT.
Note that this is not the main result of their paper.

\begin{prop}{\cite[3.2]{koberdaIrreducibilityQuantumRepresentations2018}}\label{propramanujan}
    Let $\ell\geq 5$ be an odd integer.
    Let $S_g^n$ denote the compact surface of genus $g$ with $n$ boundary components.
    Denote by $\mathcal{S}(S_g^n)$ the Kauffman bracket skein module of $S_g^n\times [0,1]$.

    Let $\lambda_1,\dotsc,\lambda_n\in\{0,2,\dotsc,\ell-3\}$.   
    Denote by $(S_{g,n},\lambda_1,\dotsc,\lambda_n)$ the closed surface of genus $g$ with $n$ marked points labelled $\lambda_1,\dotsc,\lambda_n$.

    Then the map:
    $$\mathcal{S}(S_g^n)\lra \End{\Nu_\ell(S_{g,n},\lambda_1,\dotsc,\lambda_n)}$$
    is surjective.
\end{prop}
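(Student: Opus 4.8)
The plan is to establish surjectivity by a density argument for the action of the skein algebra on a fusion basis of $\Nu_\ell(S_{g,n},\underline{\lambda})$, in the spirit of Roberts \cite{robertsIrreducibilityQuantumRepresentations2001}. First I would recall that in the skein-theoretic construction of \cite{blanchetTopologicalQuantumField1995} the space $\Nu_\ell(S_{g,n},\lambda_1,\dots,\lambda_n)$ is realized as a Kauffman bracket skein module of a handlebody $H$ with $\partial H\cong S_{g,n}$, decorated by $n$ framed arcs coloured by the $\lambda_i$ and running to the marked points, and that the map in the statement is the action of $\mathcal S(S_g^n)=\mathcal S(S_g^n\times[0,1])$ obtained by stacking collars onto $\partial H$. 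Thus the claim is equivalent to: the image $\mathcal A\subseteq\End{\Nu_\ell(S_{g,n},\underline{\lambda})}$ of this algebra homomorphism is all of $\End{\Nu_\ell(S_{g,n},\underline{\lambda})}$.

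Next I would fix a pair of pants decomposition of $S_g^n$ with dual trivalent graph $\Gamma$ (the legs carrying the colours $\lambda_i$) and use the associated fusion basis $\{v_c\}$, indexed by the admissible colourings $c$ of the edges of $\Gamma$ as in \Cref{modularfunctorbases}; every basis summand is $1$-dimensional since the theory carries no multiplicities. I would then exhibit two families of elements of $\mathcal A$. For the diagonal part: given an edge $e$ of $\Gamma$ and a colour $k\in\Lambda$, the curve parallel to $e$ coloured by the Jones--Wenzl idempotent $f_k$ acts diagonally in $\{v_c\}$, its eigenvalue on $v_c$ being a fixed function of $k$ and $c(e)$ that is an $S$-matrix entry of $\Nu_\ell$; since this $S$-matrix is invertible for every odd $\ell\geq 5$, the joint eigenvalues separate the colourings $c$, so the span of these operators is the full diagonal subalgebra, and in particular $\mathcal A$ contains all diagonal matrix units $E_{c,c}$. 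For the off-diagonal part: given admissible colourings $c,c'$ that differ along a single edge, or by one elementary recoupling move at a vertex, I would produce a simple closed curve meeting exactly one pants curve geometrically once, suitably coloured, whose operator has a nonzero $(c,c')$-entry; composing it with $E_{c,c}$ and $E_{c',c'}$ then extracts the matrix unit $E_{c,c'}$. Since the graph on admissible colourings whose edges are such moves is connected (using \Cref{colourconditions}), $\mathcal A$ contains every matrix unit, whence $\mathcal A=\End{\Nu_\ell(S_{g,n},\underline{\lambda})}$.

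The hard part will be the off-diagonal step: one must verify that the relevant matrix entry is genuinely nonzero, a Kauffman bracket recoupling computation with $\theta$- and $6j$-symbols, and that the move graph on admissible colourings is connected; moreover some moves need not be realizable by a single coloured curve and may require a short composite. This is exactly the computation carried out by Roberts \cite{robertsIrreducibilityQuantumRepresentations2001} for closed surfaces and adapted to surfaces with coloured marked points in \cite[3.2]{koberdaIrreducibilityQuantumRepresentations2018}, so rather than reprove it I would invoke that reference. Finally, I would record how this feeds into irreducibility: the Kauffman bracket skein algebra of $S_g^n$ is generated by its simple closed curves, and for each such curve $\gamma$ the Dehn twist $T_\gamma$ acts on $\Nu_\ell$ with eigenvalues among the scalars $r_k$, which are pairwise distinct at prime level by property \textbf{(III)}; hence, by Lagrange interpolation in $T_\gamma$, each spectral projection of $T_\gamma$---equivalently, the operator of $\gamma$ coloured by each $f_k$---lies already in the mapping class group image, so that image contains $\mathcal A=\End{\Nu_\ell(S_{g,n},\underline{\lambda})}$. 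Together with the semisimplicity of \Cref{indecomposableimpliesirreducible}, this yields \Cref{irreducibility}.
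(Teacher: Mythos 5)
The paper does not actually prove this proposition: it is stated purely as a cited result, and the surrounding text explains that \cite[3.2]{koberdaIrreducibilityQuantumRepresentations2018} obtains it by applying Roberts's method to a well-chosen specialization of a result of \cite{blanchetTopologicalQuantumField1995}. Your proposal, by contrast, outlines what that cited argument actually looks like before ultimately invoking the same reference for the technical core. That is a perfectly reasonable way to present a cited result, and the outline you give is essentially the correct one: work in a fusion basis, obtain the full diagonal subalgebra from curves parallel to pants curves coloured by Jones--Wenzl idempotents (the separating property coming from invertibility of the modular $S$-matrix), then generate off-diagonal matrix units from transverse curves whose operators have nonzero entries connecting adjacent admissible colourings, and finish by a connectedness argument on the graph of colourings. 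You correctly identify the nonvanishing of off-diagonal entries and the connectedness of the move graph as the two points that cannot be handwaved and must be deferred to Roberts and to Koberda--Santharoubane.

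Two small precision remarks. First, $S$-matrix invertibility for arbitrary odd $\ell\geq 5$ requires a word of care: for the $\SO$ theory with $\Lambda=\{0,2,\dots,\ell-3\}$, invertibility of the relevant $\tilde S$-matrix does hold, but this is a nontrivial input of \cite{blanchetTopologicalQuantumField1995} rather than a formal property, so it deserves an explicit citation rather than a parenthetical assertion. Second, in the final paragraph you describe the curve operator of $\gamma$ coloured by $f_k$ as ``equivalently'' the $k$-th spectral projection of $T_\gamma$; these two operators are not equal (the coloured curve operator acts by $S$-matrix ratios on each eigenspace, not by $0$ or $1$), but both are polynomials in $T_\gamma$ and generate the same commutative subalgebra, which is all the argument needs. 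The substance --- that at prime level every curve operator is a polynomial in the corresponding Dehn twist, by Lagrange interpolation using distinctness of the $r_k$ --- is precisely the remark the paper records between \Cref{propramanujan} and \Cref{irreducibility}, and your conclusion matches the paper's proof of \Cref{irreducibility} (modulo the semisimplicity input from \Cref{indecomposableimpliesirreducible}).
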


\begin{remark}
    Let $\beta\subset S_g^n$ be a simple closed curve. Let $C_\beta$ be the curve operator
    that acts on $\Nu_\ell(S_{g,n},\lambda_1,\dotsc,\lambda_n)$ via the element $e_\beta$ of $\mathcal{S}(S_g^n)$ corresponding to $\beta$.
    If $\ell$ is prime, the operator $C_\beta$ is a polynomial in the Dehn twist $T_\beta$
    (see \cite[4.2]{marcheIntroductionQuantumRepresentations2021}).
\end{remark}

We can now apply the proof of irreducibility of Roberts \cite{robertsIrreducibilityQuantumRepresentations2001}
to the case of surfaces with boundary.

\begin{coro}\label{irreducibility}
    Let $\ell\geq 5$ be a prime number. Let $g,n\geq 0$ and $\underline{\lambda}\in\Lambda^n$.
    Then the associated quantum $\SO$ representation of $\PMod{S_g^n}$ is irreducible.
\end{coro}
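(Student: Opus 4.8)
The plan is to follow Roberts' argument \cite{robertsIrreducibilityQuantumRepresentations2001}, using \Cref{propramanujan} to handle the boundary case. Fix a prime $\ell\geq 5$, a genus $g$, a number of boundary components $n$, and a coloring $\underline{\lambda}$. By \Cref{indecomposableimpliesirreducible}, the representation $\Nu_\ell(S_g^n,\underline{\lambda})$ is semisimple, so it suffices to show that its endomorphism algebra as a $\PMod{S_g^n}$-module is one-dimensional; equivalently, by double centralizer, that the image of the group algebra $\C[\PMod{S_g^n}]$ in $\End{\Nu_\ell(S_g^n,\underline{\lambda})}$ is the whole endomorphism algebra. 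The key point, which I would isolate as the first step, is that since $\ell$ is prime, each curve operator $C_\beta$ coming from the skein module acts as a polynomial in the Dehn twist $T_\beta$ (the remark after \Cref{propramanujan}, citing \cite[4.2]{marcheIntroductionQuantumRepresentations2021}); hence every $C_\beta$ lies in the image of $\C[\PMod{S_g^n}]$.

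Second, I would invoke \Cref{propramanujan}: the natural map $\mathcal{S}(S_g^n)\lra \End{\Nu_\ell(S_{g,n},\lambda_1,\dots,\lambda_n)}$ is surjective. Here one must be careful that the module $\Nu_\ell(S_{g,n},\underline{\lambda})$ attached to the \emph{punctured} closed surface with marked points is canonically identified with $\Nu_\ell(S_g^n,\underline{\lambda})$ attached to the surface with boundary — this is the standard dictionary between colored boundary components and colored marked points, and the mapping class group $\PMod{S_g^n}$ surjects onto (a finite-index-related quotient of) the mapping class group of the punctured surface, the discrepancy being generated precisely by the boundary Dehn twists, which act by scalars. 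So the image of $\C[\PMod{S_g^n}]$ contains the image of the skein algebra $\mathcal{S}(S_g^n)$, which by \Cref{propramanujan} is all of $\End{\Nu_\ell(S_g^n,\underline{\lambda})}$. Combining with semisimplicity from \Cref{indecomposableimpliesirreducible} gives irreducibility.

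The main obstacle I expect is purely bookkeeping rather than conceptual: matching conventions between the ``surface with boundary'' picture used throughout the paper (where $\PMod{S_g^n}$ fixes $\partial S_g^n$ pointwise and boundary colors are fixed data) and the ``punctured surface with labeled marked points'' picture in which Koberda--Santharoubane and Roberts work (where the relevant group is the mapping class group of $S_{g,n}$ permuting or fixing punctures). One must check that a boundary Dehn twist $T_{\partial}$ acts by a scalar on $\Nu_\ell(S_g^n,\underline{\lambda})$ — which follows from \Cref{remarklevel}, since such a twist is the gluing curve of a trivial annulus attached along the boundary — so that the kernel of $\PMod{S_g^n}\to \PMod{S_{g,n}}$ acts by scalars and does not affect the image in $\mathrm{P}\End{\Nu_\ell(S_g^n,\underline{\lambda})}$. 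Once this identification is in place, the surjectivity statement transports directly and the corollary follows. The case $3g-3+n<0$ (small surfaces) is trivial since the representations are then at most one-dimensional.
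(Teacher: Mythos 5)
Your proof is correct and follows essentially the same route as the paper's: reduce to surjectivity of $\C[\PMod{S_g^n}]\to\End{\Nu_\ell(S_{g,n},\underline{\lambda})}$ via \Cref{propramanujan} and the fact that each curve operator $C_\beta$ is a polynomial in $T_\beta$ at prime level. One small simplification: the detour through \Cref{indecomposableimpliesirreducible} and the double centralizer theorem is unnecessary — if the image of the group algebra is all of $\End{V}$, then any $\PMod{S_g^n}$-invariant subspace is invariant under the full matrix algebra and hence is $0$ or $V$, so irreducibility follows immediately without first establishing semisimplicity.
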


\begin{proof}
    The set $\{e_\beta\mid\beta\text{ \scc}\}$ generates the skein module $\mathcal{S}(S_g^n)$\linebreak as an algebra.
    Hence, by \Cref{propramanujan}, the endomorphism algebra\linebreak $\End{\Nu_\ell(S_{g,n},\lambda_1,\dotsc,\lambda_n)}$
    is generated by the curve operators.
    Moreover, every such operator $C_\beta$ is a polynomial in the associated Dehn twist $T_\beta$,
    so that\linebreak$\End{\Nu_\ell(S_{g,n},\lambda_1,\dotsc,\lambda_n)}$ is generated as an
    algebra by the action of the Dehn twists.
    Thus, the map:
    $$\C[\PMod{S_g^n}]\lra \End{\Nu_\ell(S_{g,n},\lambda_1,\dotsc,\lambda_n)}$$
    is surjective.
    This shows that $\Nu_\ell(S_{g,n},\lambda_1,\dotsc,\lambda_n)$ is irreducible.
\end{proof}

\bibliographystyle{plain}
\bibliography{biblio}

\begin{thebibliography}{10}

\bibitem{aramayonaRigidityPhenomenaMapping2016}
Javier Aramayona and Juan Souto.
\newblock Rigidity phenomena in the mapping class group.
\newblock In A.~Papadopoulos, editor, {\em Handbook of {{Teichm{\"u}ller Theory}}}, volume~VI, pages 131--165. European Mathematical Society Publishing House, Berlin, May 2016.

\bibitem{blanchetThreemanifoldInvariantsDerived1992}
C.~Blanchet, N.~Habegger, G.~Masbaum, and P.~Vogel.
\newblock Three-manifold invariants derived from the {{Kauffman}} bracket.
\newblock {\em Topology}, 31(4):685--699, October 1992.

\bibitem{blanchetTopologicalQuantumField1995}
C.~Blanchet, N.~Habegger, G.~Masbaum, and P.~Vogel.
\newblock Topological {{Quantum Field Theories}} derived from the {{Kauffman}} bracket.
\newblock {\em Topology}, 34(4):883--927, October 1995.

\bibitem{brownCohomologyGroups1982}
Kenneth~S. Brown.
\newblock {\em Cohomology of {{Groups}}}, volume~87 of {\em Graduate {{Texts}} in {{Mathematics}}}.
\newblock Springer, New York, NY, 1982.

\bibitem{deroinToledoInvariantsTopological2022}
Bertrand Deroin and Julien March{\'e}.
\newblock Toledo invariants of {{Topological Quantum Field Theories}}, July 2022.

\bibitem{eyssidieuxOrbifoldKahlerGroups2021}
Philippe Eyssidieux and Louis Funar.
\newblock Orbifold {{K{\"a}hler Groups}} related to {{Mapping Class}} groups, December 2021.

\bibitem{farbPrimerMappingClass2011}
Benson Farb and Dan Margalit.
\newblock {\em A {{Primer}} on {{Mapping Class Groups}} ({{PMS-49}})}.
\newblock Princeton University Press, September 2011.

\bibitem{funarMappingClassGroup2021c}
Louis Funar.
\newblock On mapping class group quotients by powers of {{Dehn}} twists and their representations.
\newblock In A.~Papadopoulos, editor, {\em Topology and Geometry - {{A}} Collection of Essays Dedicated to {{Vladimir G}}. {{Turaev}},}, pages 273--308. European Mathematical Society Publishing House, Berlin, July 2021.

\bibitem{gilmerIntegralLatticesTQFT2007}
Patrick~M. Gilmer and Gregor Masbaum.
\newblock Integral lattices in {{TQFT}}.
\newblock {\em Annales scientifiques de l'{\'E}cole Normale Sup{\'e}rieure}, 40(5):815--844, 2007.

\bibitem{gilmerMaslovIndexLagrangians2013}
Patrick~M. Gilmer and Gregor Masbaum.
\newblock Maslov index, lagrangians, mapping class groups and {{TQFT}}.
\newblock {\em Forum Mathematicum}, 25(5):1067--1106, September 2013.

\bibitem{koberdaIrreducibilityQuantumRepresentations2018}
Thomas Koberda and Ramanujan Santharoubane.
\newblock Irreducibility of quantum representations of mapping class groups with boundary.
\newblock {\em Quantum Topology}, 9(4):633--641, July 2018.

\bibitem{marcheIntroductionQuantumRepresentations2021}
Julien March{\'e}.
\newblock Introduction to quantum representations of mapping class groups.
\newblock In Athanase Papadopoulos, editor, {\em {{IRMA Lectures}} in {{Mathematics}} and {{Theoretical Physics}}}, volume~33, pages 109--130. EMS Press, 1 edition, July 2021.

\bibitem{reshetikhinInvariants3manifoldsLink1991}
N.~Reshetikhin and V.~G. Turaev.
\newblock Invariants of 3-manifolds via link polynomials and quantum groups.
\newblock {\em Inventiones mathematicae}, 103(1):547--597, December 1991.

\bibitem{robertsIrreducibilityQuantumRepresentations2001}
Justin Roberts.
\newblock Irreducibility of some quantum representations of mapping class groups.
\newblock {\em Journal of Knot Theory and Its Ramifications}, 10(05):763--767, August 2001.

\bibitem{serreLocalFields2013}
Jean-Pierre Serre.
\newblock {\em Local {{Fields}}}.
\newblock Springer Science \& Business Media, June 2013.

\bibitem{simpsonHiggsBundlesLocal1992}
Carlos~T. Simpson.
\newblock Higgs bundles and local systems.
\newblock {\em Publications Math{\'e}matiques de l'IH{\'E}S}, 75:5--95, 1992.

\bibitem{turaevQuantumInvariantsKnots2016}
Vladimir~G. Turaev.
\newblock {\em Quantum {{Invariants}} of {{Knots}} and 3-{{Manifolds}}}.
\newblock De Gruyter, July 2016.

\bibitem{weilRemarksCohomologyGroups1964}
Andre Weil.
\newblock Remarks on the {{Cohomology}} of {{Groups}}.
\newblock {\em Annals of Mathematics}, 80(1):149--157, 1964.

\end{thebibliography}

\end{document}